\theoremstyle{plain}
\numberwithin{equation}{section}
\newtheorem{Theorem}{Theorem}
\newtheorem{Lemma}[Theorem]{Lemma}
\newtheorem{Proposition}[Theorem]{Proposition}
\newtheorem{Corollary}[Theorem]{Corollary}
\newtheorem{Example}[Theorem]{Example}
 \theoremstyle{remark}
\title[Dirac operators with special periodic
potentials] {1D Dirac operators with special periodic potentials}
\author{Plamen Djakov}
\author{Boris Mityagin}
\begin{document}

\begin{abstract}
For one-dimensional Dirac operators of the form
$$ Ly= i \begin{pmatrix}  1  &  0 \\ 0  & -1 \end{pmatrix}
\frac{dy}{dx} + v y, \quad v= \begin{pmatrix}  0  &  P \\ Q  & 0
\end{pmatrix}, \;\; y=\begin{pmatrix}  y_1 \\ y_2 \end{pmatrix}
$$
we single out a class $X$ of $\pi$-periodic potentials $v$ with the
following properties:

 (i) The smoothness of potentials $v$ is determined only by the rate of decay
of related spectral gaps $\gamma_n = |\lambda_n^+ - \lambda_n^-|, $
where $ \lambda_n^\pm$ are the eigenvalues of $L=L(v)$ considered on
$[0,\pi]$ with periodic (for even $n$) or antiperiodic (for odd $n$)
boundary conditions.

 (ii) There is a Riesz basis
 in $L^2 ([0,\pi], \mathbb{C}^2)$
which consists of periodic (or antiperiodic) eigenfunctions and
 associated functions (at most finitely many).

In particular, the class $X $ contains the families of symmetric
potentials $X_{sym} $ (defined by $\overline{Q} =P$) and
skew-symmetric potentials $X_{skew-sym} $ (defined by $\overline{Q}
=-P$), or more generally the families $X_t, \; t \in
\mathbb{R}\setminus \{0\}, $ defined by $\overline{Q} =t P.$
Finite-zone potentials belonging to $X_t$ are dense in $X_t.$

Another interesting example of potentials  is given by 
$$v=\begin{pmatrix} 0  & P  \\  Q  & 0   \end{pmatrix} \quad
\text{with} \quad
 P(x)  = a e^{ 2ix} + b e^{- 2ix}, \quad Q(x) = A e^{ 2ix} + B
e^{ -2ix}.
$$
If $a, b, A, B \in \mathbb{C}\setminus \{0\},$ 
 then the system of root functions of $L_{Per^\pm}(v)$
 consists eventually of eigenfunctions.
 Moreover, for $bc=Per^-$ this system is a Riesz basis
in $L^2 ([0,\pi], \mathbb{C}^2)$ if  $|aA|=|bB|$ (then $v \in X$), and it is not a
basis if $|aA| \neq |bB|.$
For $bc=Per^+$ the system of root functions is a Riesz basis 
(and $v \in X$) always.

\end{abstract}

\address{Sabanci University, Orhanli,
34956 Tuzla, Istanbul, Turkey}

 \email{djakov@sabanciuniv.edu}

\address{Department of Mathematics,
The Ohio State University,
 231 West 18th Ave,
Columbus, OH 43210, USA} \email{mityagin.1@osu.edu}

\thanks{B. Mityagin acknowledges  the hospitality of Sabanci
University, Istanbul, in May--June, 2009}

\maketitle

\section{Introduction}
We consider one-dimensional Dirac operators of the form
\begin{equation}
\label{001} Ly= i \begin{pmatrix}  1  &  0 \\ 0  & -1 \end{pmatrix}
\frac{dy}{dx} + v(x) \, y, \quad v= \begin{pmatrix}  0  &  P \\ Q  &
0
\end{pmatrix}, \;\; y=\begin{pmatrix}  y_1 \\ y_2 \end{pmatrix}
\end{equation}
with periodic matrix potentials $v$ with $P,Q \in L^2 ([0,\pi],
\mathbb{C}^2),$ subject to periodic ($Per^+$) or antiperiodic
($Per^-$) boundary conditions ($bc$):
\begin{equation}
\label{002} Per^+: \;\;  y(\pi) = y(0); \qquad Per^-: \;\;  y(\pi) =
-y(0).
\end{equation}
Our goal is to single out the class of potentials $v$  which are {\em
special} in the sense that the periodic and antiperiodic boundary
value problems ($b.v.p.$) have at most finitely many linearly
independent associated functions and there is a Riesz basis in $L^2
([0,\pi], \mathbb{C}^2)$ which consists of root functions. It turns
out this is exactly the class of potentials which smoothness could be
determined only by the rate of decay of related spectral gaps
$\gamma_n = |\lambda_n^+ - \lambda_n^-|, $ where $ \lambda_n^\pm$ are
the eigenvalues of $L=L(v)$ considered on $[0,\pi]$ with periodic
(for even $n$) or antiperiodic (for odd $n$) boundary conditions.

Similar questions arise about the one-dimensional Schr\"odinger
operator (e.g., see \cite{DM15,DM25})
\begin{equation}
\label{003} Ly= -y^{\prime \prime} + v(x) y
\end{equation}
with periodic potentials $v \in L^2 ([0,\pi], \mathbb{C}),$ subject
to periodic ($Per^+$) or antiperiodic ($Per^-$) boundary conditions
\begin{equation}
\label{004} Per^\pm: \;\;  y (\pi) = \pm y(0); \qquad y^\prime (\pi)
= \pm y(0).
\end{equation}
Moreover, the methods we use to solve these questions were first
developed for Schr\"odinger operators.

The spectra of self-adjoint Schr\"odinger and Dirac operators with
periodic potentials on the real line $\mathbb{R} $ are continuous and
have gap--band structure: the segments of continuous spectrum
alternate with {\em spectral gaps} or {\em instability zones}. The
theory of Floquet and Lyapunov (e.g., see \cite{E,MW69}) explains that the
end points of spectral gaps are eigenvalues of the same differential
operators but considered on a finite interval of length one period
with periodic or antiperiodic boundary conditions.

The decay rate of spectral gaps depends on  the smoothness of the
potential, and vice versa.  This phenomenon was first
 studied for the Schr\"odinder operator (\ref{003})  with real periodic (say
$\pi$-periodic) potentials $v\in L^2 ([0,\pi]). $ Considered on
$\mathbb{R}$ it generates a self-adjoint operator in $L^2
(\mathbb{R});$ its spectrum is continuous and consists of a sequence
of intervals $ [\lambda_0^+, \lambda_1^-], \,[\lambda_1^+,
\lambda_2^-],\, [\lambda_2^+, \lambda_3^-], \ldots, $ where
$\lambda_0^+ < \lambda_2^- \leq \lambda_2^+ < \lambda_4^- \leq
\lambda_4^- < \cdots $ are all eigenvalues of the periodic $(b.v.p.)$
and $\lambda_1^- \leq \lambda_1^+ < \lambda_3^- \leq \lambda_3^- <
\cdots $ are all eigenvalues of the antiperiodic $b.v.p.$ generated
by $L$  on $[0,\pi].$

 H. Hochstadt \cite{Ho1,Ho2} (see also \cite{LP})
discovered a direct connection between the smoothness of $v$ and the
rate of decay of the lengths of spectral gaps $\gamma_n = \lambda^+_n
-\lambda^-_n :$ {\em If

$(A) \; v \in C^\infty, $ i.e., $v$ is infinitely differentiable,
then

$(B) \; \gamma_n $ decreases more rapidly than any power of $1/n.$

If a continuous function $v$ is a finite--zone potential, i.e.,
$\gamma_n =0$ for large enough $n,$ then} $v \in C^\infty. $\\ In the
mid-70's (see \cite{MO75}, \cite{MT}) the latter statement was
extended, namely, it was shown, for real $L^2 ([0,\pi])$--potentials
$v,$ that  $ \; (B) \Rightarrow (A).$  E. Trubowitz \cite{Tr} has
used the Gelfand--Levitan \cite{GL} trace formula and Dubrovin
equations \cite{Dub75,DMN} to explain, that a real $L^2
([0,\pi])$--potential $v(x)= \sum_{k \in \mathbb{Z}} V(2k) \exp
(2ikx) $ is analytic, i.e.,
$$\exists A>0: \quad |V(2k)| \leq M e^{-A|k|}, $$ if and only if the
spectral gaps decay exponentially, i.e., $$\exists a>0: \quad
\gamma_n \leq C e^{-a|n|}. $$

If the potential $v$ is complex-valued then the Schr\"odinger
operator $L(v) $  is not self-adjoint and one cannot talk about
spectral gaps. But for large enough $n \in \mathbb{N}$ 
 there are two periodic (if $n$ is even) or antiperiodic (if $n$ is
odd) eigenvalues $\lambda^\pm_n $ 
close to $n^2,$
so one may consider ``{\em gaps}``
\begin{equation}
\label{0.0} \gamma_n =|\lambda^+_n - \lambda^-_n|
\end{equation}
and ask whether the rate of decay of $\gamma_n $ still determines the
smoothness of the potential $v.$ The answer to this question is
negative as the example of M. Gasymov \cite{Gas} shows: if
\begin{equation}
\label{0.1}
 v(x) = \sum_{k=0}^\infty v_k e^{2ikx}, \quad v \in L^2([0,\pi])
\end{equation}
then {\em all} eigenvalues of periodic and antiperiodic b.v.p. are
{\em of algebraic multiplicity 2}, so $\gamma_n = 0. $

In \cite{Tk92} V. Tkachenko suggested to consider also the Dirichlet
b.v.p. $y(\pi)=y(0)=0. $  For large enough $n$ 
there is exactly one Dirichlet eigenvalue $\mu_n $ 
close to $n^2, $
so the {\em
deviation}
\begin{equation}
\label{0.2}  \delta_n = |\mu_n - \frac{1}{2}(\lambda^+_n +
\lambda^-_n)|
\end{equation}
is well defined. Using an adequate parametrization of potentials in
spectral terms similar to Marchenko--Ostrovskii's ones
\cite{Mar,MO75} for self-adjoint operators, V. Tkachenko
\cite{Tk92,Tk94}  (see also \cite{ST}) characterized
$C^\infty$-smoothness and analyticity in terms of $\delta_n$ and
differences between critical values of Lyapunov functions and
$(-1)^n$.

T. Kappeler and B. Mityagin \cite{KM1,KM2} suggested a new approach
to the study of spectral gaps and deviations based on Fourier
analysis. Using the Lyapunov-Schmidt reduction method they showed
that for large enough $n$ the numbers $z^\pm_n = \lambda^\pm_n -n^2 $
are the only roots in the unit disc of a quasi-quadratic equation
coined by them as {\em basic equation}
\begin{equation}
\label{0.5}  (z-\alpha_n (z))^2 = \beta^+_n (z) \beta^-_n (z),  \quad
|z|<1,
\end{equation}
where $\alpha_n (z)=\alpha_n (z;v)  $ and $\beta^\pm_n
(z)=\beta^\pm_n (z;v) $ depend analytically on $z,\; |z|<1, $  and
$v$ but the dependance on $v$ is suppressed in the notations. For
large enough $n$ the gaps $\gamma_n $ and deviations $\delta_n $
could be estimated from above in terms of $\beta^+_n (z)$ and $
\beta^-_n (z):$
\begin{equation}
\label{0.6} \exists C>1: \; \gamma_n \leq 2(|\beta^+_n (z)| +
|\beta^-_n (z)|), \quad \delta_n \leq C (|\beta^+_n (z)| + |\beta^-_n
(z)|), \quad   |z|<1.
\end{equation}

Using (\ref{0.6}),  T. Kappeler and B. Mityagin estimated $\ell^2
$-weighted norms $\gamma_n $ and $\delta_n $ by the corresponding
weighted Sobolev norms of $v.$ Let us recall that the smoothness of a
potential $v(x) = \sum_k v_k e^{2 ikx},\; $ can be characterized by
its Fourier coefficients in terms of appropriate weighted norms and
spaces. Namely, if
$$
\omega  = (\omega (k))_{k\in \mathbb{Z}},  \; \omega (-k)= \omega
(k)>0, \quad \omega (0) = 1,
$$
is a weight sequence (or weight), then the corresponding weighted
Sobolev space is
$$ H(\omega) = \left \{  v: \;\;
\|v\|^2_\omega = \sum_{k\in \mathbb{Z}}  |v_k|^2 (\omega (k))^2 <
\infty \right \},
$$ and the corresponding weighted $\ell^2$ space is
$$\ell^2 (\omega, \mathbb{N} ) = \left \{x=(x_n): \;\;  \|x\|^2_\omega
=\sum_{n=1}^\infty  |x_n|^2 (\omega (n))^2 < \infty  \right \}.
$$
Examples of weights:\\
(a)  Sobolev weights: $\;\; \omega_a (0) = 1, \;\; \omega_a (k) =
|k|^a \;\;
\text{for} \;\; k\neq 0; $ \\
(b) Gevrey weights: $\displaystyle\;\; \omega_{b, \gamma} (k) =
e^{b|k|^\gamma}, \quad b>0, \; \gamma \in (0,1);$\\
(c) Abel (exponential) weights: $ \displaystyle\;\; \omega_A  (k) =
e^{A|k|}, \quad A > 0. $

A weight $\Omega $ is called {\em submultiplicative} if
\begin{equation}
\label{0.7} \Omega (k+m) \leq  \Omega (k) \Omega (m), \quad k,m \in
\mathbb{Z}.
\end{equation}

In \cite{KM2}, it was proved that if $\Omega $ is a submultiplicative
weight, then
\begin{equation}
\label{0.8} v \in H(\Omega)  \Rightarrow (|\beta^+_n (z)| +
|\beta^-_n (z)|)\in \ell^2 (\Omega),
\end{equation}
which implies (in view of (\ref{0.6}))
\begin{equation}
\label{0.9} v \in H(\Omega)  \Rightarrow (\gamma_n), (\delta_n) \in
\ell^2 (\Omega ).
\end{equation}

In \cite{GKM} it was suggested to study the spectra of Dirac
operators of the form (\ref{001}) with periodic potentials in a
similar way. If $|n|$ is sufficiently large, then close to $n$ there
is one Dirichlet eigenvalue $\mu_n$ and two periodic (for even $n$)
or antiperiodic (for odd $n$) eigenvalues $\lambda^+_n, \lambda^-_n.$
So, with spectral gaps $\gamma_n $ and deviations $\delta_n $ defined
by
\begin{equation}
\label{0.10} \gamma_n = |\lambda^+_n - \lambda_n^- |, \quad \delta_n
= |\mu_n - \frac{1}{2}(\lambda^+_n - \lambda^-_n)|, \quad n \in
\mathbb{Z}.
\end{equation}
one may study the relationship between potential smoothness and the
rate of decay of $\gamma_n $ and $\delta_n. $  As in the case of
Schr\"odinger operators, there is a basic equation
$$
(z-\alpha_n (z))^2 = \beta^+_n (z) \beta^-_n (z)
$$
which characterizes when $\lambda = z+n $ with $|z|<1/2 $ is a
periodic or antiperiodic eigenvalue, and for large enough $|n|$ the
gaps $\gamma_n $ and deviations $\delta_n $ could be estimated from
above in terms of $\beta^+_n (z)$ and $ \beta^-_n (z)$  by
(\ref{0.6}) -- see below Section~2 for details.

For Dirac potentials $v= \begin{pmatrix} 0 & P\\Q  &0
\end{pmatrix},$  we say $v \in H (\Omega) $ if $P,Q \in H(\Omega). $
Then (\ref{0.9}) holds for Dirac operators: for weights of the form
$\Omega (m) =|m|^a \omega (m) $  with $a\in (0,1/4) $ and
submultiplicative $\omega $ it is proved in \cite{GK01}, and in full
generality (for arbitrary submultiplicative weights $\Omega $) in
\cite{DM6,DM15}.

In \cite{DM3, DM5}, respectively, the authors studied self-adjoint
Schr\"odinger and Dirac operators (i.e., $v$ is real-valued in the
Schr\"odinger case and symmetric, $\overline{Q}=P,$ in the Dirac
case) and estimated the smoothness of potentials $v$ by the rate of
decay of $\gamma_n. $ For a wide classes of weights $\Omega$ it was
shown that
\begin{equation}
\label{0.12} (\gamma_n) \in \ell^2 (\Omega) \Rightarrow  v \in
H(\Omega)
\end{equation}
by proving
\begin{equation}
\label{0.14}    (|\beta^+_n (z)| + |\beta^-_n (z)|) \leq C\gamma_n,
\quad |n| \geq n_0, \; C=2,
\end{equation}
and
\begin{equation}
\label{0.15}    (|\beta^+_n (z)| + |\beta^-_n (z)|)\in \ell^2
(\Omega) \Rightarrow v \in H(\Omega).
\end{equation}
In the non-self-adjoint case -- see \cite{DM5} for Schr\"odinger
operators and \cite{DM7,DM15} for Dirac operators -- we proved that
\begin{equation}
\label{0.16}    (|\beta^+_n (z)| + |\beta^-_n (z)|) \leq C(\gamma_n+
\delta_n), \quad |n| \geq n_0,
\end{equation}
where $C$ is an absolute constant. Of course, (\ref{0.15}) and
(\ref{0.16}) imply that
\begin{equation}
\label{0.18} (\gamma_n), (\delta_n) \in \ell^2 (\Omega) \Rightarrow v
\in H(\Omega).
\end{equation}
 In the
self-adjoint case deviations $\delta_n $ are not important because
the Dirichlet eigenvalue $\mu_n $ is ''trapped'' between
$\lambda^-_n$ and $\lambda^+_n,$  so $\delta_n \leq \gamma_n.$

Our aim in this paper is to study the class $X$ of Dirac potentials
$v$ for which deviations are not essential in the sense that
(\ref{0.14}) holds with some constant $C=C(v). $  A general
criterion is given in Section~3 -- see (\ref{g1}) and
Proposition~\ref{prop1}. It gives non-linear conditions for
individual potentials. Sometimes the family of such potentials is a
real linear space. We observe that an important example of such
linear spaces is the one-parametric family
\begin{equation}
\label{0.21} X_t =\left \{ v = \begin{pmatrix}
 0  &  P  \\ Q  &  0
\end{pmatrix} :  \quad Q = t\overline{P}, \;\; P \in L^2 ([0,\pi])
\right \}, \;\; t \in \mathbb{R}, \;t\neq 0.
\end{equation}
If $t =+1 $ that is the space of symmetric potentials; if $t=-1$ then
we get the space of skew-symmetric potentials.

For any real $t\neq 0 $  we have the following analog of Theorem 58
in \cite{DM15} (more general result is given in Theorem~\ref{thm1} below).
\begin{Theorem}
\label{thm000} Let $$
 L = L^0  + v(x),
\quad L^0 = i
\begin{pmatrix}
 1  &  0 \\
0  &  -1
\end{pmatrix} \frac{d}{dx},
 \quad v(x)=
\begin{pmatrix}
 0  &  P(x) \\
Q(x)  &  0
\end{pmatrix}
$$ be an $X_t$--periodic Dirac operator (i.e., $P$ and $Q$
are periodic  $L^2 ([0,\pi])$--functions such that $Q(x) =
t\overline{P(x)}), $ and let $ \gamma = (\gamma_n)_{n \in \mathbb{Z}}
$ be its gap sequence. If $\Omega = (\Omega (n))_{n\in \mathbb{Z}} $
is a sub--multiplicative weight such that
\begin{equation}
\label{0.25}  \frac{\log \Omega (n)}{n} \searrow 0 \quad \text{as}
\quad n \to \infty,
\end{equation}
then
\begin{equation}
\label{0.26} \gamma \in \ell^2 (\mathbb{Z}, \Omega) \Rightarrow v \in
H (\Omega ).
\end{equation}
If $\Omega $ is a sub--multiplicative weight of exponential type,
 i.e.,
\begin{equation}
\label{0.27} \lim_{n\to \infty} \frac{\log \Omega (n)}{n} >0,
\end{equation}
then there exists $\varepsilon >0 $ such that
\begin{equation}
\label{0.28} \gamma \in \ell^2 (\mathbb{Z}, \Omega) \Rightarrow v \in
H (e^{\varepsilon |n|} ).
\end{equation}
\end{Theorem}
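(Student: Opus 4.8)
The plan is to follow the Kappeler--Mityagin Fourier method together with the refinements the authors developed for Schr\"odinger operators (this is the ``analog of Theorem 58 in \cite{DM15}'' they invoke). First I would recall from Section~2 the basic equation and the quantities $\beta_n^\pm(z)$, together with the a~priori estimate \eqref{0.16}, which holds for every $L^2$ potential. The crucial extra input for $X_t$ potentials is a \emph{lower} bound relating $\delta_n$ to $\gamma_n$: because $Q=t\overline{P}$ forces an algebraic relation between the coefficients $\beta_n^+(z)$, $\beta_n^-(z)$ and the Dirichlet datum that enters $\delta_n$, one should be able to show $\delta_n \leq C(v)\gamma_n$ for $|n|\geq n_0$ with a constant depending on the potential (this is exactly condition \eqref{0.14} via \eqref{0.16}, i.e.\ membership in the class $X$). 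Combined with \eqref{0.16} this yields
\begin{equation*}
 |\beta_n^+(z_n)| + |\beta_n^-(z_n)| \leq C(v)\,\gamma_n, \qquad |n|\geq n_0,
\end{equation*}
so $\gamma\in\ell^2(\mathbb{Z},\Omega)$ gives $(|\beta_n^+|+|\beta_n^-|)\in\ell^2(\mathbb{Z},\Omega)$.

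Next I would run the implication $(|\beta_n^\pm|)\in\ell^2(\Omega)\Rightarrow v\in H(\Omega)$, i.e.\ the Dirac analog of \eqref{0.15}. The point is that $\beta_n^\pm(z)$ is, to leading order, the Fourier coefficient of $P$ (respectively $Q$) at frequency $2n$, plus a quadratically small correction expressed through a convolution series in the Fourier coefficients of $v$; schematically $\beta_n^+(z)\approx \widehat{P}(2n) + (\text{tail of } P,Q)$. So one bootstraps: knowing $(\beta_n^\pm)\in\ell^2(\Omega)$ and that $v\in L^2$, the submultiplicativity \eqref{0.7} lets one absorb the convolution corrections and conclude $(\widehat P(2n)),(\widehat Q(2n))\in\ell^2(\Omega)$, hence $v\in H(\Omega)$ when $\Omega$ grows subexponentially in the precise sense \eqref{0.25}. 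The condition $\log\Omega(n)/n\searrow 0$ is what guarantees that the quadratic convolution terms, which carry a loss controlled by $\Omega(k)\Omega(n-k)/\Omega(n)$, stay summable; here one uses that for a submultiplicative weight with $\log\Omega(n)/n$ decreasing to $0$ the ratios $\Omega(k)\Omega(n-k)/\Omega(n)$ are bounded and in fact small for $k$ away from $0$ and $n$. This is essentially the mechanism already used in \cite{DM6,DM15} for the forward direction \eqref{0.9}, now run in reverse.

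For the exponential-type case \eqref{0.27}, the above bootstrap does not close at the full weight $\Omega$: the ratio $\Omega(k)\Omega(n-k)/\Omega(n)$ need no longer be small, so one cannot recover $v\in H(\Omega)$ directly. Instead I would argue that $\gamma\in\ell^2(\mathbb{Z},\Omega)$ forces $\gamma_n = O(e^{-a|n|})$ for some $a>0$ (since $\Omega$ dominates $e^{a|n|}$ along a subsequence, and monotonicity/submultiplicativity spread this to all $n$), hence by \eqref{0.16} and the $X_t$ lower bound $(\beta_n^\pm)$ decay exponentially; then one applies the subexponential result with a fixed Abel weight $\Omega(n)=e^{\varepsilon|n|}$ for a suitably small $\varepsilon<a$, for which the convolution ratios \emph{are} bounded, to conclude $v\in H(e^{\varepsilon|n|})$. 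The slightly delicate bookkeeping is choosing $\varepsilon$ so that the geometric tails in the convolution are summable uniformly; this is routine once $\varepsilon$ is taken small relative to the exponential decay rate of $\gamma$.

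The main obstacle I anticipate is establishing the potential-dependent deviation bound $\delta_n\leq C(v)\gamma_n$ for $X_t$ potentials --- equivalently, verifying that $X_t\subset X$ via the criterion \eqref{g1}/Proposition~\ref{prop1}. This requires exploiting the symmetry $Q=t\overline P$ at the level of the functions $\beta_n^\pm(z)$ and the Dirichlet quantity (some conjugation or adjointness identity relating $\beta_n^+$ and $\overline{\beta_n^-}$), and controlling the resulting constant uniformly in $n\geq n_0$; all the weighted-norm analysis downstream is, by contrast, a relatively mechanical adaptation of the Schr\"odinger arguments in \cite{DM5,DM15}.
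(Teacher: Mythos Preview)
Your overall architecture is correct and matches the paper's: establish that $X_t\subset X$ (i.e.\ verify condition~\eqref{g1}), deduce the two-sided estimate $\gamma_n\asymp|\beta_n^+(z_n^*)|+|\beta_n^-(z_n^*)|$, and then run the weighted bootstrap $(|\beta_n^\pm|)\in\ell^2(\Omega)\Rightarrow v\in H(\Omega)$ via the nonlinear maps $A_N$ and the machinery of \cite[Lemma~48, Proposition~57]{DM15}. Two points of comparison are worth recording. First, the paper does \emph{not} pass through the deviations $\delta_n$ at all: once \eqref{g1} is in hand, Proposition~\ref{prop1} (built on \cite[Lemma~49]{DM15}) gives the lower bound $\gamma_n\geq c\,(|\beta_n^+(z_n^*)|+|\beta_n^-(z_n^*)|)$ directly, so your detour through \eqref{0.16} and the inequality $\delta_n\leq C(v)\gamma_n$ is unnecessary (though valid). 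Second, the conjugation identity you anticipate is exactly Lemma~\ref{lem2}(b): for $Q=t\overline P$ one has $\overline{\beta_n^+(\overline{z})}=t\,\beta_n^-(z)$. But to turn this into $|\beta_n^+(z_n^*)|=|t|\,|\beta_n^-(z_n^*)|$ you need $z_n^*\in\mathbb{R}$, and this is a genuine step you omit. The paper handles it in Lemma~\ref{lem4.4}: for $t>0$ the operator is similar to a self-adjoint one so the eigenvalues are real; for $t<0$ one shows that either $\lambda_n^\pm$ are complex conjugates or the double eigenvalue is real with geometric multiplicity~$2$, and in both cases $z_n^*=\tfrac12(\lambda_n^++\lambda_n^-)-n$ is real. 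Without this observation the conjugation identity does not close, so it should be made explicit in your argument.
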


For skew-symmetric potentials  (i.e., when $t= -1$) Theorem~\ref{thm000}
is proved in   \cite{KST} (see Theorem~1.2
and Theorem~1.3 there).  See more comments about   
results and proofs in
\cite{KST} in Section~6 below.
 \vspace{3mm}

In Section 4 we explain that if $v \in X $  then the system of root
functions of the operator $L_{Per^\pm} (v) $ has at most finitely
many linearly independent associated functions and there exists a
Riesz basis in $L^2 ([0,\pi], \mathbb{C}), $ which consists of root
functions. Theorem~\ref{thm00}, which is analogous to Theorem 1 in
\cite{DM25}, gives a necessary and sufficient conditions for
existence of such Riesz bases for a wide class of potentials in $X.$

A real-valued $v$ is called {\em finite-zone potential } if there are
only finitely many $k$ such that $\lambda^-_k < \lambda^+_k.$ S. P.
Novikov \cite{Nov74} raised the question on density of finite-zone
potentials. In 1977 V. A. Marchenko published an article \cite{Ma77}
without proofs, where he gave an explicit construction of a sequence
of finite-zone potentials $v_n $ which converges to a given potential
$v.$  In \cite{MO80} new, simplified proofs were given. To some
extent they have been inspired by the works of T. V. Misyura
\cite{Mi78,Mi79,Mi80,Mi81} on 1D Dirac operators with periodic matrix
potentials.

She considered (in equivalent form) the Dirac operators
\begin{equation}
\label{6} L = iJ\frac{d}{dx} + v, \quad J= \begin{pmatrix} 1&0\\0&-1
\end{pmatrix}, \quad v= \begin{pmatrix} 0&P\\Q&0
\end{pmatrix} \quad P,Q \in
L^2_{loc} (\mathbb{R}), \;  v(x+\pi) = v(x),
\end{equation}
with a symmetric matrix potential $v,$  i.e.,
\begin{equation}
\label{4.1} Q(x) = \overline{P(x)}.
\end{equation}
As in the case of Schr\"odinger operator, $L$ generates a
self-adjoint operator in the space $L^2 (\mathbb{R};\mathbb{C}^2)$ of
$\mathbb{C}^2$-vector functions; its spectrum is continuous and
consists of a sequence of intervals $ [\lambda_{k-1}^+,
\lambda_k^-],\;  k \in \mathbb{Z}, $  where
$$
\cdots < \lambda_{k-1}^+ <  \lambda_k^-  \leq \lambda_k^+ <
\lambda_{k+1}^- < \cdots
$$
are eigenvalues of the periodic b.v.p. $Y(\pi) = Y(0), \; Y(x)=
\begin{pmatrix} y_1 (x) \\ y_2 (x)   \end{pmatrix} $
if $k$ is even, and of the anti-periodic b.v.p.  $Y(\pi) = -Y(0) $ if
$k$ is odd. As in \cite{MO75} the comb domains 
$$  G=\left \{z: \; Im \, z
>0 \right \} \setminus \bigcup_{k \in \mathbb{Z}} [0, h_k] $$ and their
conformal mappings onto the upper half-plane are the essential tool
in \cite{Mi80,Mi81};  there is an one-to-one
correspondence between potentials $v = \begin{pmatrix} 0 & P\\
\overline{P} & 0
\end{pmatrix}$
of the Dirac operators and sequences of real numbers $h = (h_k)_{k\in
\mathbb{Z}},$  $h_k \geq 0, \; \sum h_k^2 < \infty,$ and points $\{
k\pi+i \tilde{h}_k  \},$   $|\tilde{h}_k| \leq h_k.$ Finite-zone
potentials were shown to correspond to sequences with $h_k =0$ for
$|k| \geq N, \; 0 \leq N <\infty. $

If the potential with (\ref{6}) and ({\ref{4.1}) corresponds to the
sequences $(h_k)$ and  $(k\pi+i \tilde{h}_k)$ then the truncated
sequences $(h_k^N)$ and $(k\pi+i \tilde{h}^N_k),$ where
$$
h_k^N =\begin{cases} h_k &   0 \leq |k| \leq N  \\  0 & |k|>N,
\end{cases}
\qquad \text{and} \qquad
\tilde{h}^N_k =\begin{cases} \tilde{h}_k &   0 \leq |k| \leq N  \\
0 & |k|>N,
\end{cases}
$$
correspond to the $(2N+2)$-zone potential  $v_N (x)=\begin{pmatrix} 0
& P_N (x)  \\ \overline{P_N (x)}& 0
\end{pmatrix}$
and
$$
\|P - P_N\|_{L^2([0,\pi])} \leq \|h-h^N \| \cdot (1+2\|h-h^N \|)
C(\|h\|)
$$
where $ C(x) = 16 \sqrt{\pi} (1+\pi^2/2)^5 e^{7x}, \; x>0.$

If the potential $v$  in (\ref{6}) is not symmetric then the methods
of \cite{MO80} and \cite{Mi80,Mi81} can not be applied directly.

V. A. Tkachenko \cite{Tk00} considered skew-symmetric potentials
$v(x)=i \begin{pmatrix} 0 & P \\ \overline{P} &  0 \end{pmatrix}.$ In
this class he proved that finite-zone skew symmetric potentials are
dense.

(Of course, in the non-symmetric case the notion of finite-zone
potential should be properly adjusted. A potential $v \in (\ref{6}) $
is {\em finite-zone} if for all but finitely many $n \in \mathbb{Z} $
$$   \lambda^+_n =   \lambda^-_n = \mu_n, $$
where $\mu_n $ is a Dirichlet eigenvalue such that $|\mu_n - n|
<1/4).$

In 2000 B. Mityagin \cite{Mit00} suggested (at least in the
Schr\"odinger-Hill case) an approach to construction of potentials
with prescribed tails of their spectral gap sequences. In particular,
if the tails are zero sequences one gets finite-zone potentials.
(With more careful analysis of the eigenvalues of the operator $L$
this approach leads to construction of potentials -- both for
Schr\"odinger-Hill  and Dirac operators -- whose eigenfunction
expansions do not converge in  $L^2.$ For details see \cite[Theorem
71 and Section 5.2]{DM15}.)

It turns out that the same method works for Dirac operators as well.
Following the scheme of \cite{Mit00} B. Grebert and T.
Kappeler\footnote{They wrote (see \cite{GK03}): "To prove Theorem 1.1
... we follow the approach used in \cite{Mit00}: as a set-up we take
the Fourier block decomposition introduced first for the Hill
operator in \cite{KM1,KM2} and used out subsequently for the
Zaharov--Shabat operators in \cite{GKM,GK01}. Unlike in \cite{Mit00}
where a contraction mapping argument was used to obtain the density
results for the Hill operator, we get a short proof of Theorem 1.1 by
applying the inverse function theorem in a straightforward way. As in
\cite{Mit00}, the main feature of the present proof is that it does
not involve any results from the inverse spectral theory."}
\cite{GK03} proved the density of finite-zone potentials in the
spaces $H(\Omega) $ (see Definition 2 in Section 2) under the
restriction $H(\Omega)\subset H^a, \; \exists a>0, $ where $H^a $ is
a Sobolev space; in general, the density of finite-zone potentials in
the spaces $H(\Omega) $ was proved by P. Djakov and B. Mityagin (see
an announcement in \cite{Mit03}, and a complete proof in
\cite[Theorem 70]{DM15}).

We explain in Section 5 that the proof of Theorem 70 in \cite{DM15}
as it is written there covers not only the general and symmetric
cases but a broad range of linear and nonlinear families of
potentials; certainly, among them is the space of skew-symmetric
potentials $v = i\begin{pmatrix}
 0  &  P  \\ \overline{P}  &  0
\end{pmatrix}. $

The finite-zone potential density results announced in \cite{Mit03}
and proved in \cite{DM15} for general potentials and symmetric
potentials $(v^*=v)$ could be extended immediately for
skew-symmetric potentials and $X_t$-potentials as well if one
notices that all the (non-linear) operators  $\Phi_N, \, A_N $ (see
below (\ref{4.31}) and (\ref{4.32})) act in the space of general
potentials
$$  \left \{v=
\begin{pmatrix} 0 & P
\\ Q & 0 \end{pmatrix},\quad P,Q \in L^2 ([0,\pi]) \right \} $$
in such a way that both
$$ X_{sym} = \{v \in X: \; Q(x)= \overline{P(x)}  \} $$ and
$$ X_{skew-sym} = \{v\in X:  \;\;  P(x)= i R(x), \;\; Q(x) = i
\overline{R(x)} \} $$ and any $X_t $  are {\em invariant} for these
operators.

\section{Preliminaries}

The Dirac operator (\ref{001}), considered on the interval $[0,\pi]$
with periodic $Per^+,$  antiperiodic $Per^-$ and Dirichlet $Dir$
boundary conditions $(bc)$ $$ Per^{\pm}:  \; y(\pi) = \pm y(0), \quad
Dir: \; y_1 (0)=y_2 (0), \; y_1 (\pi)=y_2 (\pi), $$ with $y(x) =
\begin{pmatrix} y_1 (x) \\ y_2 (x)  \end{pmatrix},$ gives a rise
of three operators $L_{bc} (v), \; bc = Per^\pm, Dir. $ Their spectra
are discrete; moreover, the following holds.

\begin{Lemma} \label{loc}
({\em Localization Lemma.})  The spectra of $L_{bc} (v), \;
bc=Per^\pm, \; Dir $ are discrete. There is an $N=N(v)$  such that
the union $\cup_{|n|>N} D_n $ of the discs $D_n =\{z: \, |z-n|< 1/4
\}$ contains all but finitely many of the eigenvalues of $L_{bc}, \;
bc=Per^\pm, \; Dir $ while the remaining finitely many eigenvalues
are situated in the rectangle $R_N = \{z: \; |Re \,z|, \, |Im \, z|
\leq N+1/2 \}.$

Moreover, for $|n|>N$ the disc $D_n$ contains one Dirichlet
eigenvalue $\mu_n $ and two (counted with algebraic multiplicity)
periodic (if $n$ is even) or antiperiodic (if $n$ is odd) eigenvalues
$\lambda_n^-, \lambda_n^+$  (where $Re \,\lambda_n^- < Re
\,\lambda_n^+$  or $Re \,\lambda_n^- = Re \,\lambda_n^+$ and $Im
\,\lambda_n^- \leq Im \,\lambda_n^+).$
\end{Lemma}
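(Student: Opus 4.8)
The plan is to prove the Localization Lemma by comparing $L_{bc}(v)$ with the free operator $L^0_{bc}$ and using a Rouché-type argument on the resolvent. First I would recall the spectrum of the free Dirac operator $L^0 = i\,\mathrm{diag}(1,-1)\,d/dx$ on $[0,\pi]$: for $Per^+$ the eigenvalues are $2k$, $k \in \mathbb{Z}$, each of multiplicity $2$ (eigenfunctions $e^{2ikx}(1,0)^t$ and $e^{-2ikx}(0,1)^t$ — up to the standard normalization used in the paper's conventions); for $Per^-$ they are the odd integers, again of multiplicity $2$; and for $Dir$ the eigenvalues are the integers $n$, each of multiplicity $1$. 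In all three cases the eigenvalues lie on $\mathbb{Z}$ (after the periodicity normalization to $[0,\pi]$), so the discs $D_n = \{|z-n| < 1/4\}$ are pairwise disjoint and the boundary circles $\partial D_n$ stay at distance $\ge 1/4$ from the free spectrum, while on the vertical lines $\mathrm{Re}\, z = N + 1/2$ and suitable horizontal lines the free resolvent $(L^0_{bc} - z)^{-1}$ is bounded uniformly.

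The key analytic input is a norm estimate: on the contour $\partial D_n$ (for $|n|$ large) and on $\partial R_N$, one has $\|(L^0_{bc} - z)^{-1}\| \le C$ with $C$ independent of $z$ on the contour, and moreover $\|(L^0_{bc} - z)^{-1} V\|_{\text{(suitable operator norm)}} \to 0$ as $|n| \to \infty$, uniformly for $z \in \partial D_n$, where $V$ is multiplication by $v$. This is where the $L^2$ hypothesis on $P,Q$ enters: one writes $v = v_m + r_m$ with $v_m$ a trigonometric polynomial and $\|r_m\|_{L^2}$ small, handles $r_m$ by the small-norm bound on the resolvent, and handles the finitely many Fourier modes of $v_m$ by noting that $(L^0_{bc}-z)^{-1}$ maps the mode $e^{2ikx}$ with a factor decaying like $1/\mathrm{dist}(z, \text{spec})$, which on $\partial D_n$ with $|n| \gg |k|$ is $O(1/|n|)$. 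Hence for $|n| > N$ with $N = N(v)$ large enough, $\|(L^0_{bc}-z)^{-1} V\| < 1$ on $\partial D_n$, so $L_{bc}(v) - z = (L^0_{bc} - z)(I + (L^0_{bc}-z)^{-1}V)$ is invertible there and
\[
P_n := \frac{1}{2\pi i} \int_{\partial D_n} (L_{bc}(v) - z)^{-1}\, dz
\]
is a well-defined Riesz projection; a standard homotopy/continuity argument (deform $v$ to $0$) shows $\operatorname{rank} P_n$ equals the rank of the free Riesz projection, i.e. $2$ in the periodic/antiperiodic case and $1$ in the Dirichlet case. Likewise, for $|z| = N+1/2$ on $\partial R_N$ chosen to avoid all $D_n$, invertibility of $I + (L^0_{bc}-z)^{-1}V$ gives that no eigenvalue lies outside $R_N \cup \bigcup_{|n|>N} D_n$, so only finitely many eigenvalues remain, all inside $R_N$. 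Discreteness of the spectrum then follows because $L_{bc}(v)$ has compact resolvent (the free resolvent is compact, being essentially a Hilbert–Schmidt integral operator, and $(L_{bc}-z)^{-1} = (I + (L^0_{bc}-z)^{-1}V)^{-1}(L^0_{bc}-z)^{-1}$).

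The last point to settle is the ordering and real-part statement: inside $D_n$ with $|n|>N$ there are exactly two periodic (or antiperiodic) eigenvalues $\lambda_n^\pm$ counted with algebraic multiplicity, and we simply \emph{define} the labeling by $\mathrm{Re}\,\lambda_n^- < \mathrm{Re}\,\lambda_n^+$, or $\mathrm{Re}\,\lambda_n^- = \mathrm{Re}\,\lambda_n^+$ together with $\mathrm{Im}\,\lambda_n^- \le \mathrm{Im}\,\lambda_n^+$ — this is a convention, not something to prove, so nothing further is needed there; similarly the single Dirichlet eigenvalue $\mu_n$ in $D_n$ needs no labeling. The main obstacle I expect is the uniform resolvent bound on the small circles $\partial D_n$: getting $\|(L^0_{bc}-z)^{-1}V\| < 1$ for \emph{all} $|n| > N$ simultaneously requires choosing the right operator norm (acting, say, on the Fourier side between appropriate $\ell^2$-type spaces) so that the off-diagonal decay of $(L^0_{bc}-z)^{-1}$ combines with the $\ell^2$ decay of $\hat v$; this is exactly the Fourier-block machinery referenced in the introduction (\cite{GKM,GK01,DM6,DM15}), and in the write-up it is cleanest to cite those estimates rather than reprove them. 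Everything else — disjointness of the $D_n$, compactness of the free resolvent, the homotopy argument fixing the ranks — is routine.
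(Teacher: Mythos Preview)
The paper does not actually prove this lemma: immediately after the statement it simply writes ``See details and more general results about localization of these spectra in \cite{Mit03,Mit04} and \cite[Section 1.6.]{DM15}.'' Your resolvent--perturbation / Rouch\'e argument (Neumann series for $I+(L^0_{bc}-z)^{-1}V$ on $\partial D_n$ and $\partial R_N$, rank invariance of the Riesz projections under the homotopy $tv$, $t\in[0,1]$, splitting $v$ into a trigonometric polynomial plus a small $L^2$ remainder to force $\|(L^0_{bc}-z)^{-1}V\|<1$) is precisely the standard proof carried out in those references, especially \cite{Mit04} and \cite[Section~1.6]{DM15}, so your sketch is correct and aligned with what the paper is citing.
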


See details and more general results about localization of these
spectra in \cite{Mit03,Mit04} and \cite[Section 1.6.]{DM15}.

Now, in view of Lemma \ref{loc}, for $|n| > N(v)$  the spectral gaps
\begin{equation}
\label{13.3} \gamma_n = |\lambda_n^+ - \lambda_n^-|
\end{equation}
and deviations
\begin{equation}
\label{13.4} \delta_n = |\mu_n - \frac{1}{2}(\lambda_n^+ +
\lambda_n^-)|
\end{equation}
are well-defined.

 Moreover, the localization Lemma \ref{loc} allows
us to apply the Lyapunov--Schmidt projection method and reduce the
eigenvalue equation $Ly = \lambda y $ for  $\lambda \in D_n $ to an
eigenvalue equation in the two-dimensional space $E_n^0 = \{ L^0 Y =
n Y\}$   (see \cite[Section 2.4]{DM15}).

This leads to the following (see in \cite{DM15} the formulas
(2.59)--(2.80) and Lemma~30).
\begin{Lemma}
\label{lem1} Let $$ P(x) = \sum_{k\in 2\mathbb{Z}} p(k) e^{ikx},
\quad  Q(x)= \sum_{k\in 2\mathbb{Z}} q(k) e^{ikx}, $$ and let
\begin{equation}
\label{23.29} S^{11} = \sum_{\nu =0}^\infty S^{11}_{2\nu +1} , \quad
S^{22} = \sum_{\nu =0}^\infty S^{22}_{2\nu +1} , \qquad S^{12} =
\sum_{\nu =1}^\infty S^{12}_{2\nu} , \quad S^{21} = \sum_{\nu
=1}^\infty S^{21}_{2\nu} ,
\end{equation}
where
\begin{equation}
\label{23.30} S^{11}_{2\nu +1} =
 \sum_{j_0 , j_1 , \ldots, j_{2\nu} \neq n }
\frac{p(-n-j_0 ) q(j_0 + j_1 ) p(-j_1 -j_2 ) q(j_2 + j_3 ) \cdots
 q(j_{2\nu} + n )} {(n-j_0  +z) (n-j_1 +z)
 \cdots   (n-j_{2\nu}  +z)},
\end{equation}
\begin{equation}
\label{23.31} S^{22}_{2\nu +1} =  \sum_{i_0 , i_1 , \ldots, i_{2\nu}
\neq n } \frac{q(n + i_0 ) p(-i_0 -i_1 ) q(i_1 + i_2 ) p( -i_2 -i_3 )
\cdots  p(-i_{2\nu} - n )} {(n-i_0  +z) (n-i_1  +z) \ldots
(n-i_{2\nu} +z)};
\end{equation}
\begin{equation}
\label{23.34} S^{12}_0 = \left  \langle   V e^2_n, e^1_n \right
\rangle = p(-2n), \quad S^{21}_0 = \left  \langle   V e^1_n, e^2_n
\right \rangle = q(2n),
\end{equation}
and, for $\nu =1,2 \ldots, $
\begin{equation}
\label{23.36} S^{12}_{2\nu} =\sum_{j_1 , \ldots, j_{2\nu} \neq n }
\frac{p(-n-j_1 ) q(j_1 + j_2 ) p(-j_2 -j_3 ) q(j_3 + j_4 ) \cdots
 p(-j_{2\nu} - n )} {(n-j_1  +z) (n-j_2
+z) \cdots   (n-j_{2\nu}  +z)},
\end{equation}
\begin{equation}
\label{23.37} S^{21}_{2\nu} =
 \sum_{j_1 , \ldots, j_{2\nu} \neq n } \frac{q(n+j_1 ) p(-j_1 -
j_2 ) q(j_2 +j_3 ) p(-j_3 - j_4 ) \cdots  q(j_{2\nu} + n )} { (n-j_1
+z) (n-j_2  +z) \cdots  (n-j_{2\nu} +z)}.
\end{equation}

(a) For large enough $|n|$ the series in (\ref{23.29})-(\ref{23.37})
converge absolutely and uniformly if $|z|\leq 1, $ so $ S^{ij} (n, z,
p, q) $ are analytic functions of $z$ for $|z|< 1. $

 (b) The number $\lambda = n + z, \;|z| < 1/4, $ is a periodic (for
even $n$) or antiperiodic (for odd $n$) eigenvalue of $L$ if and only
if $z$  is an eigenvalue of the matrix $ \left [
\begin{array}{cc} S^{11} & S^{12}  \\ S^{21} &  S^{22} \end{array}
\right ]. $

(c) The number $\lambda = n + z^*, \;|z| < 1/4, $ is a periodic (for
even $n$) or antiperiodic (for odd $n$) eigenvalue of $L$ of
geometric multiplicity 2  if and only if $z^*$  is an eigenvalue of
the matrix $ \left [
\begin{array}{cc} S^{11} & S^{12}  \\ S^{21} &  S^{22} \end{array}
\right ] $ of geometric multiplicity 2.
\end{Lemma}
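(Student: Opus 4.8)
The plan is to run the Lyapunov--Schmidt reduction in the Fourier (block) basis, as was done for Hill operators in \cite{KM1,KM2} and for Dirac operators in \cite{GKM,GK01}; the series (\ref{23.29})--(\ref{23.37}) will then appear as the entries of the reduced $2\times2$ matrix expanded into a Neumann series. First I would record the structure of the free operator $L^0_{Per^\pm}$: every integer $n$ of the parity fixed by the boundary condition is an eigenvalue of geometric multiplicity two, with eigenspace $E_n^0=\mathrm{span}\{e_n^1,e_n^2\}$, the system $\{e_n^1,e_n^2\}_n$ is an orthonormal basis of $L^2([0,\pi],\mathbb{C}^2)$, and $\langle ve_n^2,e_n^1\rangle=p(-2n)$, $\langle ve_n^1,e_n^2\rangle=q(2n)$, which will account for the order-zero terms in (\ref{23.34}). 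Let $P_n$ be the orthogonal projection onto $E_n^0$ and $Q_n=I-P_n$. By Lemma~\ref{loc}, for $|n|>N(v)$ and $\lambda=n+z$ with $|z|<1/4$ the equation $Ly=\lambda y$ is equivalent to the pair obtained by applying $P_n$ and $Q_n$: the $E_n^0$-part is a linear equation for $\hat y_n:=P_ny\in\mathbb{C}^2$, while the $Q_n$-part determines $y_n^\perp:=Q_ny$, because on $\mathrm{Ran}\,Q_n$ the operator $L^0-n-z$ is boundedly invertible (its eigenvalues there are the numbers $m+z$, $m\neq0$, bounded away from $0$ for $|z|<1$). This gives $\bigl(I+(L^0-n-z)^{-1}Q_nv\bigr)y_n^\perp=-(L^0-n-z)^{-1}Q_nv\,\hat y_n$.

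Next I would expand $\bigl(I+(L^0-n-z)^{-1}Q_nv\bigr)^{-1}$ as a Neumann series and feed the resulting $y_n^\perp=y_n^\perp(\hat y_n,z)$ back into the $E_n^0$-equation, which then takes the form $z\,\hat y_n=\mathcal S(n,z)\hat y_n$, where $\mathcal S(n,z)$ acts on $E_n^0$ and has matrix $[S^{ij}(n,z)]$ in the basis $\{e_n^1,e_n^2\}$. The off-diagonal shape $v=\begin{pmatrix}0&P\\Q&0\end{pmatrix}$ interchanges the two components each time $v$ is applied, which forces the alternation of factors $p(\cdot)$ and $q(\cdot)$ in every monomial and the parity pattern of (\ref{23.29}) (an odd number of factors for the diagonal entries $S^{11},S^{22}$, an even number for $S^{12},S^{21}$), the successive denominators $n-j_i+z$ being the eigenvalues of $L^0-n-z$ on the intermediate Fourier modes $j_i\neq n$; collecting these contributions reproduces the formulas (\ref{23.30})--(\ref{23.37}). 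Parts (b) and (c) then follow formally: $\lambda=n+z$ is a $Per^\pm$ eigenvalue iff $(\mathcal S(n,z)-zI)\hat y_n=0$ has a nontrivial solution, i.e.\ iff $z$ is an eigenvalue of $[S^{ij}(n,z)]$ (expanding $\det\bigl([S^{ij}(n,z)]-zI\bigr)$ recovers the basic equation of the introduction); and since $\hat y_n\mapsto\hat y_n+y_n^\perp(\hat y_n,z^*)$ is a linear isomorphism from $\ker(\mathcal S(n,z^*)-z^*I)$ onto the $\lambda$-eigenspace of $L$, the geometric multiplicity of $\lambda$ equals that of $z^*$ for $[S^{ij}(n,z^*)]$, which for a $2\times2$ matrix is $2$ precisely when $[S^{ij}(n,z^*)]=z^*I$.

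The substantial part, and where I expect the main obstacle, is (a): the absolute and uniform convergence on $|z|\le1$ of the multi-index sums (\ref{23.30})--(\ref{23.37}), equivalently the geometric decay in $\nu$ of the Neumann terms once $|n|\ge N(v)$. Writing $m_i=n-j_i\neq0$, a typical monomial in $S^{11}_{2\nu+1}$ reads $p(-2n+m_0)\,q(2n-m_0-m_1)\,p(-2n+m_1+m_2)\cdots q(2n-m_{2\nu})\big/\prod_i(m_i+z)$, and for $|n|\ge N(v)$ every such monomial is small: each Fourier coefficient appearing in it either has an argument of modulus $\ge|n|$, hence is a tail term of the $\ell^2$ sequences $(p(k))$, $(q(k))$, or one of the at most two indices $m_i$ entering its argument satisfies $|m_i|\ge|n|/2$, hence contributes a denominator $|m_i+z|\ge|n|/2-1$. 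Carrying this alternative through the whole chain and summing each free index $j_i$ by the Cauchy--Schwarz inequality against $\bigl(\sum_{m\neq0}|m+z|^{-2}\bigr)^{1/2}$ produces a bound of the form $C\,\varepsilon_n^{\,2\nu+1}$ with $\varepsilon_n=\varepsilon_n(v)\to0$, which yields convergence; uniformity for $|z|\le1$ is immediate since the denominators are bounded below there uniformly, and analyticity of the $S^{ij}$ in $z$ on $|z|<1$ then follows by Weierstrass' theorem. The only delicate point is the bookkeeping of which coefficient factor gets paired with which denominator so that every partial summation stays an $\ell^2\times\ell^2$ pairing; this is exactly the computation of \cite{DM15} (formulas (2.59)--(2.80) and Lemma~30), and I would follow it verbatim, observing that it uses neither self-adjointness nor any relation between $P$ and $Q$ and hence applies to the general potential considered here.
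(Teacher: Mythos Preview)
Your proposal is correct and follows exactly the approach the paper intends: the paper does not give an independent proof of this lemma but simply refers to \cite{DM15} (formulas (2.59)--(2.80) and Lemma~30), and your sketch---Lyapunov--Schmidt reduction onto $E_n^0$, Neumann expansion of $(I+(L^0-n-z)^{-1}Q_nv)^{-1}$, the parity bookkeeping forced by the off-diagonal structure of $v$, and the $\ell^2\times\ell^2$ summation estimate---is precisely that computation. One small point worth making explicit in your write-up: the intermediate indices $j_i$ have the same parity as $n$, so $m_i=n-j_i$ is a nonzero \emph{even} integer and hence $|m_i+z|\ge 1$ for $|z|\le 1$, which is what makes the uniform lower bound on the denominators work on the full closed disc.
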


 Moreover,
(\ref{23.29})--(\ref{23.37}) imply immediately the following.

\begin{Lemma}
\label{lem2} (a) For any potential functions $P, Q  $
 \begin{equation}
\label{1.16}  S^{11}(n,z;p,q)=S^{22}(n,z;p,q), \quad S^{21}
(n,z;p,q)= \overline{S^{12} (n,\overline{z}; \overline{q},
\overline{p} )},
\end{equation}
\begin{equation}
\label{1.17} S_{2\nu}^{21} (n,z;tp,sq) = t^\nu s^{\nu+1}S_{2\nu}^{21}
(n,z;p,q), \quad S_{2\nu}^{12} (n,z;tp,sq) = t^{\nu+1} s^\nu
S_{2\nu}^{12} (n,z;p,q)
\end{equation}
\begin{equation}
\label{1.19} S_{2\nu+1}^{jj} (n,z;tp,sq) = t^{\nu+1} s^{\nu+1}
S_{2\nu+1}^{jj} (n,z;p,q), \quad j=1,2.
\end{equation}

(b) If $Q(x) = c \overline{P(x)}, \;  $  $c$ real, then
(\ref{1.16})-(\ref{1.19}) imply
\begin{equation}
\label{1.20} \overline{S^{21} (n,\overline{z}; p,q)} = c S^{12}
(n,z;p,q), \quad \overline{S^{jj} (n,z;p,q)} = S^{jj}
(n,\overline{z}, p,q).
\end{equation}

(c) In the case of skew-symmetric potentials $ c=-1,$  so
\begin{equation}
\label{1.21} \overline{S^{21}(n,\overline{z})} =- S^{12} (n,z).
\end{equation}
\end{Lemma}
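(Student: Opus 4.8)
The plan is to read every identity directly off the explicit multiple series (\ref{23.30})--(\ref{23.37}). By Lemma~\ref{lem1}(a) these series converge absolutely for $|z|\le 1$ and $|n|$ large, and that is the only analytic input needed: it licenses reindexing the summation variables, conjugating term by term, and factoring scalars out of the sums, after which each claim reduces to a one-line check on a single monomial. For the first identity in (\ref{1.16}) it suffices to prove $S^{11}_{2\nu+1}=S^{22}_{2\nu+1}$ for every $\nu$. In the sum (\ref{23.31}) for $S^{22}_{2\nu+1}$ make the substitution $i_k=j_{2\nu-k}$, $k=0,\dots,2\nu$: it is a bijection of $\{(i_0,\dots,i_{2\nu}):i_k\neq n\}$ onto itself, it only permutes the factors of the denominator $\prod_k(n-i_k+z)$, and, since in both (\ref{23.30}) and (\ref{23.31}) each numerator factor is a scalar $p(\cdot)$ or $q(\cdot)$ depending only on the (unordered) pair of consecutive indices it joins, the substitution turns the numerator of (\ref{23.31}) into the numerator of (\ref{23.30}) written in the opposite order. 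By commutativity of multiplication the two series coincide term by term; summing over $\nu$ gives $S^{11}=S^{22}$.

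For the scaling relations (\ref{1.17}), (\ref{1.19}) one simply counts the coefficients in a generic monomial. In $S^{12}_{2\nu}$ (see (\ref{23.36})) it is a product of $\nu+1$ factors $p(\cdot)$ and $\nu$ factors $q(\cdot)$; in $S^{21}_{2\nu}$ (see (\ref{23.37})) it is $\nu$ factors $p(\cdot)$ and $\nu+1$ factors $q(\cdot)$; in $S^{jj}_{2\nu+1}$ ($j=1,2$) it is $\nu+1$ of each; and the $\nu=0$ terms (\ref{23.34}) ($S^{12}_0=p(-2n)$, $S^{21}_0=q(2n)$) fit the same count. Hence $p\mapsto tp$, $q\mapsto sq$ multiplies each monomial of $S^{12}_{2\nu}$ by $t^{\nu+1}s^\nu$, each monomial of $S^{21}_{2\nu}$ by $t^\nu s^{\nu+1}$, and each monomial of $S^{jj}_{2\nu+1}$ by $t^{\nu+1}s^{\nu+1}$; pulling this constant out of the sum gives (\ref{1.17}), (\ref{1.19}).

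For the conjugation identity in (\ref{1.16}) let $\overline p,\overline q$ denote the Fourier-coefficient sequences of $\overline P,\overline Q$, so that $\overline p(k)=\overline{p(-k)}$ and $\overline q(k)=\overline{q(-k)}$. The series for $S^{12}_{2\nu}(n,\overline z;\overline q,\overline p)$ is (\ref{23.36}) with $p$ replaced by $\overline q$, $q$ by $\overline p$, and $z$ by $\overline z$; its typical numerator factors are $\overline q(-n-j_1)=\overline{q(n+j_1)}$, $\overline p(j_1+j_2)=\overline{p(-j_1-j_2)}$, and so on, while each denominator factor equals $\overline{n-j_k+z}$. Conjugating the whole series therefore reproduces exactly (\ref{23.37}), i.e. $S^{21}_{2\nu}(n,z;p,q)$; summing over $\nu$ yields the second identity in (\ref{1.16}).

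Finally, for part (b): $Q=c\overline P$ with $c\in\mathbb R\setminus\{0\}$ is the same as $q(k)=c\,\overline{p(-k)}$, i.e. $\overline{q(k)}=c\,p(-k)$ and $\overline{p(k)}=c^{-1}q(-k)$. Conjugate the series (\ref{23.30}) for $S^{11}_{2\nu+1}(n,z;p,q)$ term by term and substitute these relations: each of the $\nu+1$ conjugated $q$-factors contributes a factor $c$ and each of the $\nu+1$ conjugated $p$-factors a factor $c^{-1}$ (the count from the second paragraph), so the powers of $c$ cancel, the argument $z$ is replaced by $\overline z$, and the surviving series is precisely (\ref{23.31}), namely $S^{22}_{2\nu+1}(n,\overline z;p,q)$; by the first identity of (\ref{1.16}) this is $S^{11}_{2\nu+1}(n,\overline z;p,q)$, so after summation $\overline{S^{11}(n,z;p,q)}=S^{11}(n,\overline z;p,q)$, which (together with $S^{11}=S^{22}$) is the second line of (\ref{1.20}). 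The identical procedure applied to (\ref{23.36}) for $S^{12}_{2\nu}$ leaves one surviving factor $c^{-1}$ and produces (\ref{23.37}) with $z\to\overline z$, i.e. $\overline{S^{12}(n,z;p,q)}=c^{-1}S^{21}(n,\overline z;p,q)$, which rearranges (multiply by $c$ and conjugate, $c$ being real) to the first line of (\ref{1.20}); part (c), namely (\ref{1.21}), is the case $c=-1$. The only real difficulty throughout is bookkeeping — keeping the right coefficient in the right slot, tracking the sign flip $k\mapsto-k$ when passing to the Fourier coefficients of a conjugate potential, and counting $p$- versus $q$-factors correctly; once absolute convergence from Lemma~\ref{lem1}(a) has been invoked to legitimize the rearrangements and termwise operations, nothing deeper enters.
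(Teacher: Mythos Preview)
Your proof is correct and takes exactly the approach the paper intends: the paper simply states that the lemma follows immediately from the explicit series (\ref{23.29})--(\ref{23.37}), and you have carried out that verification in detail---reindexing, termwise conjugation, and counting $p$/$q$ factors---exactly as the wording ``imply immediately'' invites. There is no alternative argument to compare; your write-up is just the unpacked version of the paper's one-line proof.
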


We set for convenience
\begin{equation}
\label{1.22} \alpha_n (z;v) := S^{11} (n,z;v) \quad \beta^+_n (z;v)
:= S^{21} (n,z;v), \quad \beta^-_n (z;v) := S^{12} (n,z;v).
\end{equation}
Next we summarize some basic properties of $\alpha_n (z;v) $ and
$\beta^\pm_n (z;v).$

\begin{Proposition}
\label{bprop} (a) The functions $\alpha_n (z;v) $ and $\beta^\pm_n
(z;v) $ depend analytically on $z$ for $|z|\leq 1$  and  the
following estimates hold:
\begin{equation}
\label{1.36} |\alpha_n (v;z)|, \, |\beta^\pm_n (v;z)| \leq C \left (
\mathcal{E}_{|n|} (r) +\frac{1}{\sqrt{|n|}} \right ) \quad \text{for}
\; |n| \geq n_0, \;\; |z| \leq \frac{1}{2}
\end{equation}
and
\begin{equation}
\label{1.37} \left |\frac{\partial\alpha_n}{\partial z} (v;z)
\right|, \, \left |\frac{\partial\beta^\pm_n}{\partial z} (v;z)
\right |\leq C \left ( \mathcal{E}_{|n|} (r) +\frac{1}{\sqrt{|n|}}
\right ) \quad \text{for} \;\; |n| \geq n_0, \;\; |z| \leq
\frac{1}{4},
\end{equation}
where $ r= (r(m)), \; r(m) = \max \{|p (\pm m)|, q(\pm m) \}, \; C =
C(\|r\|), \; n_0 =n_0 (r)$  and
$$
\mathcal{E}_m (r) = \left (\sum_{|n|\geq m} |r(n)|^2  \right )^{1/2}.
$$

(b) For large enough $n,$ the number $\lambda = n+ z, $ $ z\in
D=\{\zeta: |\zeta| \leq 1/4\}, $ is an eigenvalue of $L_{Per^\pm} $
if and only if
 $z\in D $ satisfies the basic equation
\begin{equation}
\label{be} (z-\alpha_n (z;v))^2 = \beta^+_n (z;v) \beta^-_n (z,v),
\end{equation}

(c) For large enough $n,$ the equation (\ref{be}) has exactly two
roots in $D$ counted with multiplicity.
\end{Proposition}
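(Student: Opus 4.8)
\noindent\textit{Proof (sketch).} The plan is to establish the three parts of Proposition~\ref{bprop} in order, using the explicit multilinear series of Lemma~\ref{lem1} and the symmetry identities of Lemma~\ref{lem2}; the effort is concentrated in part~(a). Analyticity of $\alpha_n(\cdot\,;v)$ and $\beta^\pm_n(\cdot\,;v)$ on the disc $|z|\le1$ is already contained in Lemma~\ref{lem1}(a): each summand of (\ref{23.30})--(\ref{23.37}) is a rational function of $z$ whose only poles lie at $z=j_k-n$ with $|j_k-n|\ge1$, and by Lemma~\ref{lem1}(a) the series (\ref{23.29}) converge absolutely and uniformly on $|z|\le1$ for $|n|$ large, so the sums are analytic on $|z|<1$ and continuous up to the boundary. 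Hence the substantive content of (a) is the two-sided bounds (\ref{1.36})--(\ref{1.37}).

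For these I would follow the scheme of \cite[Section~2.4, Lemma~30]{DM15}. Replacing $|p(\cdot)|,|q(\cdot)|$ by $r(\cdot)$ and using $|n-j_k+z|\ge\tfrac12|n-j_k|$ for $j_k\ne n$, $|z|\le\tfrac12$, every block occurring in (\ref{23.30})--(\ref{23.37}) is dominated by an iterated convolution sum built from the kernel $|r(\cdot)|/|\cdot|$. The single building block is the elementary estimate
\[
\sum_{\ell\ne0}\frac{|r(m-\ell)|}{|\ell|}\le\frac{C\|r\|}{\sqrt{|m|}}+C\,\mathcal E_{|m|/2}(r),
\]
proved by splitting the sum at $|\ell|=|m|/2$: the tail $|\ell|\ge|m|/2$ is controlled by Cauchy--Schwarz together with $\sum_{|\ell|\ge|m|/2}|\ell|^{-2}\le4/|m|$, while for $|\ell|<|m|/2$ one has $|m-\ell|>|m|/2$, so Cauchy--Schwarz with the bounded sum $\sum_{\ell\ne0}|\ell|^{-2}$ yields the second term. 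After the substitution $\ell_k=n-j_k$, the first and last Fourier factors in each block (for instance $p(-n-j_0)$ becomes $p(\ell_0-2n)$ and $q(j_{2\nu}+n)$ becomes $q(2n-\ell_{2\nu})$) acquire arguments that are shifts of $2n$, so the building block applies there with $|m|$ of order $|n|$ and produces the factor $\mathcal E_{|n|/2}(r)+|n|^{-1/2}$; keeping one kernel paired with each of the remaining denominators and summing over the free indices, a block of order $2\nu$ or $2\nu+1$ is bounded by a quantity geometric in $\nu$, and for $|n|\ge n_0(r)$ these sum to a convergent series (the summability being the technical core of \cite[Lemma~30]{DM15}), which gives (\ref{1.36}) with $C=C(\|r\|)$. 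The derivative bound (\ref{1.37}) follows in the same way, since $\partial_z$ turns a factor $(n-j_k+z)^{-1}$ of a summand into $-(n-j_k+z)^{-2}$, of modulus $\le2$ on $|z|\le\tfrac14$.

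Part~(b) is then formal: by Lemma~\ref{lem1}(b), for $|n|$ large the number $\lambda=n+z$, $|z|<\tfrac14$, is a $Per^\pm$ eigenvalue of $L$ if and only if $z$ is an eigenvalue of $\left[\begin{smallmatrix}S^{11}&S^{12}\\S^{21}&S^{22}\end{smallmatrix}\right]$, that is, $(z-S^{11})(z-S^{22})-S^{12}S^{21}=0$; by Lemma~\ref{lem2}(a) one has $S^{11}=S^{22}=\alpha_n$, and by the notation (\ref{1.22}), $S^{12}=\beta^-_n$ and $S^{21}=\beta^+_n$, so this is precisely (\ref{be}). Part~(c) is a Rouch\'e count: put $f(z)=(z-\alpha_n(z))^2-\beta^+_n(z)\beta^-_n(z)$ and compare it with $g(z)=z^2$ on the circle $|z|=\tfrac14$. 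With $\varepsilon_n:=C\big(\mathcal E_{|n|}(r)+|n|^{-1/2}\big)$, part~(a) gives $|\alpha_n|,|\beta^\pm_n|\le\varepsilon_n$ there, hence $|f(z)-g(z)|=|{-2z\alpha_n(z)+\alpha_n(z)^2-\beta^+_n(z)\beta^-_n(z)}|\le\tfrac12\varepsilon_n+2\varepsilon_n^2$. Since $P,Q\in L^2$ we have $r\in\ell^2$, so $\varepsilon_n\to0$ and for $|n|$ large $|f-g|<\tfrac1{16}=|g|$ on $|z|=\tfrac14$; by Rouch\'e's theorem $f$ and $g$ have the same number of zeros in $|z|<\tfrac14$ counted with multiplicity, namely two, while the same strict inequality forbids zeros on the circle, so (\ref{be}) has exactly two roots in $D$.

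The only genuinely delicate step is the summation over $\nu$ in part~(a): arranging the $\nu$-fold blocks so that, uniformly in $\nu$, one $r$-factor carries the $\mathcal E_{|n|}(r)$-decay and one denominator the $|n|^{-1/2}$ gain while the full series over $\nu$ stays summable for $|n|\ge n_0(r)$. This bookkeeping is exactly what is carried out in \cite[Section~2.4]{DM15}; parts (b) and (c) are short and formal.
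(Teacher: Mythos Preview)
Your proposal is correct and follows the same approach as the paper: part~(a) is deferred to \cite{DM15} (the paper cites Proposition~35 there rather than Lemma~30, but your sketch of the multilinear estimates is in the same spirit), part~(b) is read off from Lemma~\ref{lem1} and the identity $S^{11}=S^{22}$, and part~(c) is Rouch\'e's theorem using $\sup_D|\alpha_n|,\sup_D|\beta^\pm_n|\to0$. The paper's own proof is in fact terser than yours---it simply cites the reference for~(a) and states Rouch\'e for~(c)---so your expansion of the estimates is more than the paper itself provides; one small cleanup is that the derivative bound~(\ref{1.37}) follows more directly from~(\ref{1.36}) via Cauchy's integral formula on the annulus $\tfrac14\le|z|\le\tfrac12$, avoiding the term-by-term differentiation of the $\nu$-fold products.
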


\begin{proof}
Part (a) is proved in \cite[Proposition 35]{DM15}. Lemma~\ref{lem1}
implies Part (b).  By (\ref{1.36}), $\sup_D |\alpha_n (z)| \to 0 $
and $\sup_D |\beta^\pm_n (z)| \to 0 $ as $n\to \infty. $ Therefore,
Part (c) follows from the Rouch\'e theorem.
\end{proof}

In view of Lemma \ref{loc}, for large enough $|n|$  the numbers
\begin{equation}
\label{1.41} z^*_n = \frac{\lambda^+_n + \lambda_n^-}{2} - n
\end{equation}
are well defined. The following estimate from above of $\gamma_n $
follows from Proposition~\ref{bprop} (see \cite[Lemma 40]{DM15}).

\begin{Lemma}
\label{lem11}  For large enough $|n|$
\begin{equation}
\label{1.42} \gamma_n =|\lambda^+_n - \lambda_n^-| \leq  (1+\delta_n)
(|\beta_n^- (z^*_n) |+|\beta^+_n (z^*_n)|)
\end{equation}
with $\delta_n \to 0 $ as $|n| \to \infty. $
\end{Lemma}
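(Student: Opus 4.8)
The plan is to use the basic equation (\ref{be}) evaluated at the two eigenvalue parameters $z_n^\pm:=\lambda_n^\pm-n$, combined with the smallness of the $z$-derivatives of $\alpha_n$ and $\beta_n^\pm$ furnished by (\ref{1.37}). Set
$$
\eta_n:=\sup_{|z|\le 1/4}\Big(\big|\tfrac{\partial\alpha_n}{\partial z}(z)\big|+\big|\tfrac{\partial\beta_n^+}{\partial z}(z)\big|+\big|\tfrac{\partial\beta_n^-}{\partial z}(z)\big|\Big),
$$
so that $\eta_n\to 0$ by (\ref{1.37}), and observe that on the convex disc $\{|z|\le 1/4\}$ the functions $\alpha_n,\beta_n^\pm$ are Lipschitz with constant $\eta_n$. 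By the Localization Lemma~\ref{loc} and (\ref{1.41}), for $|n|$ large all three points $z_n^+$, $z_n^-$ and $z_n^*=\tfrac12(z_n^++z_n^-)$ lie in $\{|z|<1/4\}$, so these Lipschitz bounds may be applied along the segments joining them.

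The first step is the pointwise consequence of the basic equation: since $\lambda_n^\pm$ are periodic (resp.\ antiperiodic) eigenvalues with $z_n^\pm\in D$, Proposition~\ref{bprop}(b) gives $(z_n^\pm-\alpha_n(z_n^\pm))^2=\beta_n^+(z_n^\pm)\beta_n^-(z_n^\pm)$, whence, taking absolute values and using the arithmetic--geometric mean inequality,
$$
|z_n^\pm-\alpha_n(z_n^\pm)|=\sqrt{|\beta_n^+(z_n^\pm)|\,|\beta_n^-(z_n^\pm)|}\le \tfrac12\big(|\beta_n^+(z_n^\pm)|+|\beta_n^-(z_n^\pm)|\big).
$$

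Next I would write $z_n^+-z_n^-=\big(z_n^+-\alpha_n(z_n^+)\big)-\big(z_n^--\alpha_n(z_n^-)\big)+\big(\alpha_n(z_n^+)-\alpha_n(z_n^-)\big)$, apply the triangle inequality together with the previous display and $|\alpha_n(z_n^+)-\alpha_n(z_n^-)|\le\eta_n\gamma_n$, to get
$$
\gamma_n\le \tfrac12\big(|\beta_n^+(z_n^+)|+|\beta_n^-(z_n^+)|+|\beta_n^+(z_n^-)|+|\beta_n^-(z_n^-)|\big)+\eta_n\gamma_n .
$$
Since $|z_n^\pm-z_n^*|=\gamma_n/2$, the Lipschitz estimate yields $|\beta_n^\pm(z_n^+)|\le|\beta_n^\pm(z_n^*)|+\eta_n\gamma_n/2$ and likewise with $z_n^-$ in place of $z_n^+$, so the bracket is at most $2\big(|\beta_n^+(z_n^*)|+|\beta_n^-(z_n^*)|\big)+2\eta_n\gamma_n$. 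Substituting and rearranging gives $\gamma_n(1-2\eta_n)\le |\beta_n^+(z_n^*)|+|\beta_n^-(z_n^*)|$, i.e.\ the asserted inequality with $\delta_n:=2\eta_n/(1-2\eta_n)$, which tends to $0$.

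I do not expect a genuine obstacle: the only point requiring care is that the derivative bound (\ref{1.37}) is guaranteed precisely on $\{|z|\le 1/4\}$, which is exactly where Lemma~\ref{loc} confines $z_n^\pm$ and $z_n^*$, so every intermediate point stays in the admissible region. If one wished to avoid even the Lipschitz step at the midpoint, the same estimate can alternatively be obtained by factoring $F_n(z):=(z-\alpha_n(z))^2-\beta_n^+(z)\beta_n^-(z)=(z-z_n^+)(z-z_n^-)e^{\psi_n(z)}$ on the disc and evaluating at $z_n^*$, but that route requires controlling $\psi_n$ and is longer, so I would keep the elementary argument above.
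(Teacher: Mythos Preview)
Your argument is correct: the decomposition $z_n^+-z_n^-=(z_n^+-\alpha_n(z_n^+))-(z_n^--\alpha_n(z_n^-))+(\alpha_n(z_n^+)-\alpha_n(z_n^-))$, the AM--GM bound coming from the basic equation, and the Lipschitz control via (\ref{1.37}) combine cleanly to give $\gamma_n(1-2\eta_n)\le|\beta_n^+(z_n^*)|+|\beta_n^-(z_n^*)|$, which is exactly the claim. The paper itself does not supply a proof of this lemma but simply refers to \cite[Lemma~40]{DM15}; the standard argument there follows the same route (basic equation at the two roots plus smallness of the $z$-derivatives of $\alpha_n,\beta_n^\pm$), so your approach is essentially the intended one.
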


\section{Spectral gaps asymptotics and potential smoothness}

 Let $X$ be the class of all Dirac potentials $v =
\begin{pmatrix} 0 & P
 \\  Q&  0   \end{pmatrix} $  such that
\begin{equation}
\label{g1} \exists c, \, N>0 : \quad c^{-1} |\beta^+_n (z^*_n;v)|
\leq |\beta^-_n (z^*_n;v)| \leq  c \,|\beta^+_n (z^*_n;v)|,  \quad
|n|>N.
\end{equation}

\begin{Lemma}
\label{lem21} Suppose $v \in X$ and the set $M$ of all $n\in
\mathbb{Z}$ such that $\beta^-_n (z^*_n;v)\neq 0 $ and $\beta^+_n
(z^*_n;v)\neq 0 $ is infinite. Let $K_n $  be the closed disc with
center $z_n^*$ and radius $\gamma_n, $  i.e. $K_n = \{z:
|z-z_n^*|\leq \gamma_n \}.$ Then for all $n \in M$ with sufficiently
large $|n|$ we have
\begin{equation}
\label{g2} \frac{1}{2}|\beta^\pm_n (z_n^*;v)|  \leq   |\beta^\pm_n
(z;v)| \leq 2 |\beta^\pm_n (z_n^*;v)| \quad \forall \,z \in K_n,
\end{equation}
where $c$ is the constant from (\ref{g1}).
\end{Lemma}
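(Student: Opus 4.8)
The plan is to control the variation of the analytic functions $\beta^\pm_n(z;v)$ on the small disc $K_n$ by comparing their size to their derivatives, using the estimates (\ref{1.36}) and (\ref{1.37}) from Proposition~\ref{bprop}. First I would recall that by Lemma~\ref{lem11} the gap satisfies $\gamma_n \leq (1+\delta_n)(|\beta^-_n(z^*_n)| + |\beta^+_n(z^*_n)|)$ with $\delta_n \to 0$, and that by the defining property (\ref{g1}) of $X$ we have, for $|n| > N$, the two-sided comparison $c^{-1}|\beta^+_n(z^*_n)| \leq |\beta^-_n(z^*_n)| \leq c\,|\beta^+_n(z^*_n)|$. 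Combining these two facts gives $\gamma_n \leq (1+\delta_n)(1+c)\,|\beta^\pm_n(z^*_n)|$ for either sign, i.e. the radius of $K_n$ is bounded by a fixed multiple of $|\beta^\pm_n(z^*_n)|$. This is the crucial reduction: the disc on which we must control $\beta^\pm_n$ has radius comparable to the value of $\beta^\pm_n$ at its center.

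Next, for $z \in K_n$ I would write, by the mean value inequality applied to the analytic function $\beta^\pm_n$,
\begin{equation}
|\beta^\pm_n(z;v) - \beta^\pm_n(z^*_n;v)| \leq \gamma_n \cdot \sup_{w \in K_n} \left|\frac{\partial \beta^\pm_n}{\partial z}(w;v)\right|.
\end{equation}
Here one must check that $K_n \subset \{|z| \leq 1/4\}$ so that (\ref{1.37}) applies: since $z^*_n \in D_n$ with $|z^*_n - n|$ small (indeed $z^*_n = \frac{1}{2}(\lambda^+_n + \lambda^-_n) - n$ lies well inside $D_n$ by Lemma~\ref{loc}) and $\gamma_n \to 0$, the disc $K_n$ shifted back to the origin stays within radius $1/4$ for $|n|$ large. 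Then (\ref{1.37}) bounds the derivative by $C(\mathcal{E}_{|n|}(r) + 1/\sqrt{|n|})$, which tends to $0$ as $|n| \to \infty$. Call this quantity $\eta_n \to 0$. Combining with the radius bound from the previous step,
\begin{equation}
|\beta^\pm_n(z;v) - \beta^\pm_n(z^*_n;v)| \leq (1+\delta_n)(1+c)\,|\beta^\pm_n(z^*_n;v)| \cdot \eta_n.
\end{equation}
Since $\delta_n \to 0$ and $\eta_n \to 0$, for all $n \in M$ with $|n|$ large enough the factor $(1+\delta_n)(1+c)\eta_n \leq 1/2$, and the triangle inequality then yields both $|\beta^\pm_n(z;v)| \leq \frac{3}{2}|\beta^\pm_n(z^*_n;v)| \leq 2|\beta^\pm_n(z^*_n;v)|$ and $|\beta^\pm_n(z;v)| \geq \frac{1}{2}|\beta^\pm_n(z^*_n;v)|$, which is (\ref{g2}).

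The main obstacle is not any single estimate but making sure the quantifiers line up correctly: the constant $c$ is fixed (it comes from membership in $X$), while $\delta_n$, $\eta_n$ are sequences tending to $0$, so the threshold $|n|$ beyond which $(1+\delta_n)(1+c)\eta_n \leq 1/2$ depends on $c$ — this is fine because we only claim the conclusion for sufficiently large $|n|$. One should also be slightly careful that the restriction to $n \in M$ (where $\beta^\pm_n(z^*_n) \neq 0$) is what makes the two-sided estimate meaningful; on $n \notin M$ at least one $\beta$ vanishes at the center and, by (\ref{g1}), essentially both do for large $|n|$, so there is nothing to prove there. No inverse-spectral input is needed — the argument is purely the elementary observation that an analytic function varies little, relative to its own size, over a disc whose radius is small compared to that size.
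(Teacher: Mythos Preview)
Your proof is correct and follows essentially the same route as the paper: bound the oscillation of $\beta^\pm_n$ on $K_n$ by the derivative estimate (\ref{1.37}), bound the radius $\gamma_n$ by Lemma~\ref{lem11}, and then use (\ref{g1}) to convert $|\beta^-_n(z_n^*)|+|\beta^+_n(z_n^*)|$ into $(1+c)|\beta^\pm_n(z_n^*)|$ so that the triangle inequality yields (\ref{g2}) once the product of the small factors drops below $1/2$. The paper's write-up is slightly terser (it replaces your $1+\delta_n$ by $2$ from the outset) but the argument is identical.
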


\begin{proof}
In view of (\ref{1.37}) in Proposition \ref{bprop},  if $z \in K_n $
then
$$
\left | \beta^\pm_n (z)- \beta^\pm_n (z_n^*) \right | \leq
\varepsilon_n  \left |z - z_n^* \right | \leq \varepsilon_n \,
\gamma_n,
$$
where $\varepsilon_n=C \left ( \mathcal{E}_{|n|} (r)
+\frac{1}{\sqrt{|n|}} \right ) \to 0 $  as $|n| \to \infty.$ By
Lemma~\ref{lem11}, for large enough $|n|$ we have
$$
\gamma_n \leq 2  \left ( |\beta^-_n (z_n^*)| +|\beta^+_n (z_n^*)|
\right ).
$$
Therefore,
$$
\left | \beta^\pm_n (z)- \beta^\pm_n (z_n^*) \right | \leq
2\varepsilon_n  \left ( |\beta^-_n (z_n^*)| +|\beta^+_n (z_n^*)|
\right ) \leq 2\varepsilon_n (1+c) \left | \beta^\pm_n (z_n^*) \right
|,
$$
which implies
$$
[1-2\varepsilon_n (1+c)]\left | \beta^\pm_n (z_n^*)  \right | \leq
\left | \beta^\pm_n (z)  \right | \leq [1+2\varepsilon_n (1+c)]\left
| \beta^\pm_n (z_n^*)  \right |.
$$
Since $\varepsilon_n \to 0$ as $|n| \to \infty, $ (\ref{g2}) follows.
\end{proof}

\begin{Proposition}
\label{prop1} Suppose $v$ is a Dirac potential such that (\ref{g1})
holds. Then, for $|n|>N_0 (v),$ the following two-sided estimates for
$\gamma_n = |\lambda^+_n - \lambda^-_n|$ hold:
\begin{equation}
\label{g4} \frac{2\sqrt{c}}{1+4c}\,\left (|\beta^-_n
(z^*_n;v)|+|\beta^+_n (z^*_n;v)|\right ) \leq \gamma_n \leq 2 \left
(|\beta^-_n (z^*_n;v)|+|\beta^+_n (z^*_n;v)|\right ).
\end{equation}
\end{Proposition}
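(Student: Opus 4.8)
The plan is to combine the upper bound already available from Lemma~\ref{lem11} with a matching lower bound obtained by locating the two roots of the basic equation (\ref{be}) inside the disc $K_n$ and forcing their separation. The upper estimate $\gamma_n \leq 2(|\beta^-_n(z^*_n;v)|+|\beta^+_n(z^*_n;v)|)$ is immediate: by Lemma~\ref{lem11} we have $\gamma_n \leq (1+\delta_n)(|\beta^-_n(z^*_n)|+|\beta^+_n(z^*_n)|)$ with $\delta_n\to 0$, so for $|n|$ large the factor $1+\delta_n$ is below $2$. (The only subtlety is that if both $\beta^\pm_n(z^*_n)=0$ for some $n$, then by (\ref{be}) also $z^*_n-\alpha_n(z^*_n)=0$, and one checks the two roots coincide, so $\gamma_n=0$ and both inequalities in (\ref{g4}) hold trivially; hence we may work with $n\in M$ as in Lemma~\ref{lem21}.)

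For the lower bound, fix $n\in M$ with $|n|$ large. The roots of (\ref{be}) in $D$ are $\lambda^\pm_n - n$, and their midpoint is $z^*_n$; write them as $z^*_n \pm \tfrac12\rho_n$ where, by the elementary geometry of the midpoint, $\rho_n$ is a quantity with $|\rho_n| = \gamma_n$. Both roots lie in $K_n=\{|z-z^*_n|\le\gamma_n\}$ since $|(z^*_n\pm\tfrac12\rho_n)-z^*_n|=\tfrac12\gamma_n\le\gamma_n$. Now I would write the basic equation at a root as $(z-\alpha_n(z))^2=\beta^+_n(z)\beta^-_n(z)$ and, using that $z-\alpha_n(z)$ has a simple zero structure and derivative close to $1$ on $D$ (from (\ref{1.37}), $|\partial_z\alpha_n|\to 0$), express $z^\pm_n-\alpha_n(z^\pm_n)$ in terms of the difference $z^+_n-z^-_n$. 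More precisely, subtracting the two instances of the basic equation, or directly: since $\zeta\mapsto \zeta-\alpha_n(\zeta)$ is a near-identity biholomorphism of $D$ carrying $z^\pm_n$ to $\pm\sqrt{\beta^+_n(z^\pm_n)\beta^-_n(z^\pm_n)}$ (with a consistent choice of branch), we get
\begin{equation}
\label{g5}
\gamma_n = |z^+_n - z^-_n| \geq (1-\varepsilon_n)\left| \sqrt{\beta^+_n(z^+_n)\beta^-_n(z^+_n)} - \left(-\sqrt{\beta^+_n(z^-_n)\beta^-_n(z^-_n)}\right)\right|
\end{equation}
for some $\varepsilon_n\to 0$. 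The right-hand side is, up to the $(1-\varepsilon_n)$ factor, $|\sqrt{\beta^+_n(z^+_n)\beta^-_n(z^+_n)}|+|\sqrt{\beta^+_n(z^-_n)\beta^-_n(z^-_n)}|$ when the two square roots have opposite sign — which they do by construction of the two roots — and hence is $\geq 2\sqrt{|\beta^+_n(\zeta)||\beta^-_n(\zeta)|}$ evaluated at either root $\zeta$.

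It then remains to replace the values at $z^\pm_n$ by the values at $z^*_n$ and convert the geometric mean into the sum $|\beta^-_n(z^*_n)|+|\beta^+_n(z^*_n)|$. For the first replacement I invoke Lemma~\ref{lem21}: since $z^\pm_n\in K_n$, we have $|\beta^\pm_n(z^\pm_n)|\ge\tfrac12|\beta^\pm_n(z^*_n)|$, so $\sqrt{|\beta^+_n(z^\pm_n)||\beta^-_n(z^\pm_n)|}\ge\tfrac12\sqrt{|\beta^+_n(z^*_n)||\beta^-_n(z^*_n)|}$. For the second, set $a=|\beta^-_n(z^*_n;v)|$, $b=|\beta^+_n(z^*_n;v)|$; the hypothesis (\ref{g1}) gives $c^{-1}b\le a\le cb$, and an optimization of $\sqrt{ab}/(a+b)$ over the admissible range shows $\sqrt{ab}\ge \frac{\sqrt c}{1+c}(a+b)$. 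Chaining the factors — $(1-\varepsilon_n)$ absorbed into a constant slightly worse than $1$, then $\cdot 2$ from the two terms, then $\cdot\tfrac12$ from Lemma~\ref{lem21}, then $\cdot\frac{\sqrt c}{1+c}$ from the AM–GM bound — yields $\gamma_n\ge \frac{2\sqrt c}{1+4c}(a+b)$ once $|n|>N_0(v)$ is large enough that all the $\to 0$ error terms are dominated (the constant $\frac{2\sqrt c}{1+4c}$ being the clean bound that survives after swallowing $\varepsilon_n$; note $\frac{\sqrt c}{1+c}>\frac{\sqrt c}{1+4c}$, leaving room). The main obstacle is the careful bookkeeping in (\ref{g5}): justifying that the two roots of the basic equation correspond to \emph{opposite} signs of the square root (so the differences add rather than cancel) and that the near-identity map $\zeta\mapsto\zeta-\alpha_n(\zeta)$ distorts distances by at most a factor $1+o(1)$ on $K_n$ — both follow from Proposition~\ref{bprop} and the smallness of $\alpha_n,\beta^\pm_n$ and their derivatives on $D$, but need to be spelled out.
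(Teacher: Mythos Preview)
Your approach is essentially the paper's: both obtain the upper bound from Lemma~\ref{lem11} and the lower bound by splitting the basic equation into its two $\pm$ branches and controlling the ratio $|\beta^-_n|/|\beta^+_n|$ via Lemma~\ref{lem21}. The difference is packaging: the paper does not carry out your near-identity/AM--GM computation but simply invokes \cite[Lemma~49]{DM15}, which delivers the ready-made inequality
\[
\gamma_n \;\ge\; \left(\frac{2\sqrt{t_n}}{1+t_n}-\delta_n\right)\bigl(|\beta^-_n(z^*_n)|+|\beta^+_n(z^*_n)|\bigr),
\qquad t_n=\frac{|\beta^+_n(z^+_n)|}{|\beta^-_n(z^+_n)|},
\]
and then uses Lemma~\ref{lem21} to trap $t_n\in[1/(4c),4c]$, whence $\tfrac{2\sqrt{t_n}}{1+t_n}\ge\tfrac{4\sqrt{c}}{1+4c}$ and the constant $\tfrac{2\sqrt{c}}{1+4c}$ survives after absorbing $\delta_n$. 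Your route is more self-contained; theirs is shorter by citation.

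One step in your sketch needs more than you gave it. You assert that because the two roots sit on opposite $\pm$ branches, the modulus of the sum of square roots equals the sum of the moduli. That does not follow from the sign choice alone: writing $G(z)=\sqrt{\beta^+_n(z)\beta^-_n(z)}$ for a fixed analytic branch on $K_n$, you need $|G(z^+_n)+G(z^-_n)|\ge(1-o(1))\bigl(|G(z^+_n)|+|G(z^-_n)|\bigr)$, which requires the \emph{arguments} of $G(z^+_n)$ and $G(z^-_n)$ to be close. This is true, but it is the real content of the step: bound $|G'|$ on $K_n$ using (\ref{1.37}), Lemma~\ref{lem21}, and the ratio control (\ref{g1}), then combine with the already-proved upper bound $\gamma_n\le 2(|\beta^+_n(z^*_n)|+|\beta^-_n(z^*_n)|)$ to get $|G(z^+_n)-G(z^-_n)|=o(|G(z^*_n)|)$. (Your closing parenthetical compares $\tfrac{\sqrt{c}}{1+c}$ to $\tfrac{\sqrt{c}}{1+4c}$, but the target is $\tfrac{2\sqrt{c}}{1+4c}$; the needed inequality $\tfrac{\sqrt{c}}{1+c}\ge\tfrac{2\sqrt{c}}{1+4c}$ still holds since $c\ge 1$.)
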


\begin{proof} The estimate of $\gamma_n $ from above
follows from Lemma~\ref{lem11}.

In view of  (\ref{g1}),  $\beta^+_n (z^*_n;v)$  and $\beta^-_n (z^*_n;v)$
may vanish only simultaneously.
Suppose that $\beta^+_n (z^*_n;v)\cdot \beta^-_n (z^*_n;v) \neq 0 $
for infinitely many $n$    --  for such $n$ we have $\gamma_n \neq 0 $
due to Lemma~\ref{lem1}(c).
Then, by Lemma~49 in \cite{DM15}, there
exists a sequence $\delta_n \downarrow 0 $ such that, for large
enough $|n|, $
\begin{equation}
\label{g5}  \gamma_n  \geq \left (\frac{2\sqrt{t_n}}{1+t_n}
-\delta_n \right )  \left ( |\beta^-_n (z_n^*)| +|\beta^+_n (z_n^*)|
\right ),
\end{equation}
where $\delta_n \to 0 $ as $|n| \to \infty $ and
$$
 t_n = |\beta_n^+ (z_n^+)| /|\beta_n^- (z_n^+)|, \quad z_n^+ =
 \lambda_n^+ - n.
$$

In view of (\ref{g2}) in Lemma \ref{lem21},  for large enough $|n|$
we have $    1/(4c)  \leq t_n  \leq 4c. $ Therefore, by (\ref{g5}),
$$
 \gamma_n  \geq \left ( \frac{2\sqrt{4c}}{1+4c}-\delta_n \right )
  \left ( |\beta^-_n (z_n^*)| +|\beta^+_n (z_n^*)|
\right ),
$$
which implies (since $\delta_n \to 0$) the left inequality in
(\ref{g4}). This completes the proof.
\end{proof}

\begin{Corollary}
\label{cor3} If $v \in X$  then  the operators $L_{Per^\pm}$ have at
most finitely many eigenvalues of algebraic multiplicity 2 but
geometric multiplicity 1.
\end{Corollary}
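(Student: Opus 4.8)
The plan is to show that for $v \in X$, a periodic/antiperiodic eigenvalue $\lambda = n+z$ with $|n|$ large can only be of geometric multiplicity $1$ (and hence algebraically double with a nontrivial associated function) when $\beta^+_n(z^*_n;v) = \beta^-_n(z^*_n;v) = 0$, and that this forces $\gamma_n = 0$, which by the two-sided estimate $(\ref{g4})$ of Proposition~\ref{prop1} and Lemma~\ref{lem1}(c) can happen for only finitely many $n$. The few small eigenvalues (those in the rectangle $R_N$ of the Localization Lemma~\ref{loc}) are automatically finite in number, so they contribute at most finitely many non-semisimple eigenvalues; the substance of the claim is entirely about the $D_n$ with $|n| > N(v)$.

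First I would fix $n$ with $|n|$ large and recall from Lemma~\ref{lem1}(b)--(c) that the periodic/antiperiodic eigenvalues in $D_n$ correspond to eigenvalues of the $2\times 2$ matrix
$$
\mathbf{S}_n(z) = \begin{pmatrix} S^{11}(n,z) & S^{12}(n,z) \\ S^{21}(n,z) & S^{22}(n,z) \end{pmatrix}
= \begin{pmatrix} \alpha_n(z) & \beta^-_n(z) \\ \beta^+_n(z) & \alpha_n(z) \end{pmatrix},
$$
using $(\ref{1.16})$ and the notation $(\ref{1.22})$, and that geometric multiplicity is preserved. A $2\times 2$ matrix with equal diagonal entries has a double eigenvalue of geometric multiplicity $2$ precisely when it is scalar, i.e. when its off-diagonal entries both vanish; otherwise any double eigenvalue has geometric multiplicity $1$. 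So if $\lambda_n^+ = \lambda_n^-$ (an algebraically double eigenvalue) with geometric multiplicity $1$, then at the common value $z = z^*_n = \frac{1}{2}(\lambda_n^+ + \lambda_n^-) - n$ we cannot have $\mathbf{S}_n(z^*_n)$ scalar — but wait, this needs care: the relevant vanishing is of $\beta^\pm_n$ evaluated at the eigenvalue point. Here the key point is that $z^*_n$ \emph{is} the eigenvalue when $\lambda_n^+ = \lambda_n^-$, so I evaluate the $\beta$'s exactly at $z^*_n$.

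Now suppose $v \in X$ and, for contradiction, that infinitely many $D_n$ contain an eigenvalue of algebraic multiplicity $2$ and geometric multiplicity $1$. For each such $n$ we have $\lambda_n^+ = \lambda_n^-$, hence $\gamma_n = 0$; by $(\ref{g4})$ in Proposition~\ref{prop1}, this forces $\beta^-_n(z^*_n;v) = \beta^+_n(z^*_n;v) = 0$ for all sufficiently large such $n$. But then $\mathbf{S}_n(z^*_n)$ is the scalar matrix $\alpha_n(z^*_n)I$, so the double eigenvalue at $z^*_n$ has geometric multiplicity $2$ — contradicting our assumption. (Alternatively, and perhaps cleaner: use Lemma~\ref{lem1}(c) together with $(\ref{g4})$ directly — if $\gamma_n = 0$ for infinitely many $n$ then $\beta^\pm_n(z^*_n) = 0$ for all large such $n$ by $(\ref{g4})$, and then Lemma~\ref{lem1}(c) says the eigenvalue has geometric multiplicity $2$.) Hence there are only finitely many $n$ with $|n| > N(v)$ for which $D_n$ carries a non-semisimple eigenvalue, and adding the finitely many eigenvalues in $R_N$ gives the result.

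The main obstacle is the bookkeeping at the point of evaluation: one must be sure that when $\lambda_n^+ = \lambda_n^-$ the relevant argument is exactly $z^*_n = \lambda_n^\pm - n$, so that the hypothesis $(\ref{g1})$ and the estimate $(\ref{g4})$, both stated at $z^*_n$, apply verbatim; and one must invoke the equivalence in Lemma~\ref{lem1}(c) between geometric multiplicity $2$ for $L_{Per^\pm}$ and for the matrix $\mathbf{S}_n$. Everything else — the triviality that a $2\times 2$ matrix with equal diagonal and vanishing off-diagonal is scalar, and that a scalar matrix is semisimple — is immediate, and the finiteness of the low-lying spectrum is Lemma~\ref{loc}. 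So the proof is essentially a one-line consequence of Proposition~\ref{prop1} plus Lemma~\ref{lem1}(c).
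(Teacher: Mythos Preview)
Your proof is correct and follows essentially the same route as the paper: both use the lower bound in $(\ref{g4})$ to show that $\gamma_n = 0$ forces $\beta^\pm_n(z^*_n) = 0$, then observe that the matrix $\mathbf{S}_n(z^*_n)$ is scalar and invoke Lemma~\ref{lem1}(c) to conclude geometric multiplicity $2$. The only difference is cosmetic: the paper phrases it as a direct implication (double eigenvalue $\Rightarrow$ geometric multiplicity $2$), while you frame it as a contradiction; the logical content is identical.
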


\begin{proof}
Indeed, the estimate in (\ref{g4}) imply that for large enough $|n|$
the number $\lambda^*_n = \lambda^+_n = \lambda^-_n$ is a double
eigenvalue if and only if $\beta^+_n (z^*_n)=\beta^-_n (z^*_n)=0. $
But then, in view of (\ref{1.41}), the number $z_n^*  $ is a double
root of the basic equation (\ref{be}), so it is a double eigenvalue
of the matrix $\left [\begin{array}{cc} \alpha_n (z^*_n) & \beta^-_n
(z^*_n)\\ \beta^+_n (z^*_n)& \alpha_n (z^*_n)
\end{array}\right ]$ of geometric multiplicity 2 because the
off-diagonal elements are zeros. By Lemma~\ref{lem1}, the number
$\lambda_n^*=z_n^* +n $ is a double eigenvalue of $L(v)$ of geometric
multiplicity 2 (periodic for even $n$ or antiperiodic for odd $n$),
so the corresponding two-dimensional invariant subspace consists of
eigenvectors only.
\end{proof}

A sequence of positive numbers  $$\Omega (m), \, m \in \mathbb{Z},
\quad \Omega (-m) =\Omega (m), $$ is called submultiplicative weight
sequence (or submultiplicatve weight) if $$ \Omega (n+m) \leq \Omega
(n) \Omega (m), \quad n,m \in \mathbb{Z}. $$ For any
submultiplicative weight we define the Hilbert sequence space $$
\ell^2 (\Omega) = \{(x_k)_{k \in \mathbb{Z}} \;: \;\; \sum_k |x_k|^2
(\Omega (k))^2 < \infty \}$$ and the functional space
\begin{equation}
\label{34.1} H(\Omega) = \{f= \sum f_k e^{i2kx}\; : \;\; \sum_k
|f_k|^2 (\Omega (k))^2 < \infty\}.
\end{equation}
We consider also the weighted Hilbert space of potentials
\begin{equation}
\label{34.2} H_D (\Omega)= \left \{v = \begin{pmatrix}
 0  &  P  \\Q  & 0  \end{pmatrix} \;: \quad  P,Q \in H(\Omega) \right \}.
\end{equation}

By \cite[Theorem 41]{DM15}, if $\Omega $ is a submultiplicative
weight, then
\begin{equation}
\label{34.3} v \in H_D (\Omega)  \Rightarrow  (\gamma_n ) \in \ell^2
(\Omega).
\end{equation}
The converse implication
\begin{equation}
\label{34.4} (\gamma_n ) \in \ell^2 (\Omega) \Rightarrow v \in H_D
(\Omega)
\end{equation}
holds in the self-adjoint case where $\overline{Q(x)} = P(x)$ under
some additional assumptions on $\Omega $ (see Theorem 58 in
\cite{DM15}) but fails in general (see however Theorem 68 in
\cite{DM15}). The following statement extends the validity of
(\ref{34.4}) to the case where $v \in X.$

\begin{Theorem}
\label{thm1} Let $$
 L = L^0  + v(x),
\quad L^0 = i
\begin{pmatrix}
 1  &  0 \\
0  &  -1
\end{pmatrix} \frac{d}{dx},
 \quad v(x)=
\begin{pmatrix}
 0  &  P(x) \\
Q(x)  &  0
\end{pmatrix}
$$ be a Dirac operator with potential $v \in X $
and let $ \gamma = (\gamma_n)_{n \in \mathbb{Z}} $ be its gap
sequence. If $\Omega = (\Omega (n))_{n\in \mathbb{Z}} $ is a
sub--multiplicative weight such that
\begin{equation}
\label{34.05}  \frac{\log \Omega (n)}{n} \searrow 0 \quad \text{as}
\quad n \to \infty,
\end{equation}
then
\begin{equation}
\label{34.06} \gamma \in \ell^2 (\mathbb{Z}, \Omega) \Rightarrow v
\in H (\Omega ).
\end{equation}
If $\Omega $ is a sub--multiplicative weight of exponential type,
 i.e.,
\begin{equation}
\label{34.100} \lim_{n\to \infty} \frac{\log \Omega (n)}{n} >0,
\end{equation}
then there exists $\varepsilon >0 $ such that
\begin{equation}
\label{34.08} \gamma \in \ell^2 (\mathbb{Z}, \Omega) \Rightarrow v
\in H (e^{\varepsilon |n|} ).
\end{equation}
\end{Theorem}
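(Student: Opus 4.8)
The plan is to reduce Theorem~\ref{thm1} to a statement purely about the sequences $(\beta^\pm_n(z^*_n;v))$ and then invoke the machinery already developed for the symmetric case (Theorem~58 in \cite{DM15}), whose proof is carried out in exactly these terms. First I would observe that, by hypothesis $v\in X$, the condition (\ref{g1}) holds, so Proposition~\ref{prop1} gives the two-sided estimate (\ref{g4}):
\begin{equation*}
\frac{2\sqrt{c}}{1+4c}\,\bigl(|\beta^-_n(z^*_n;v)|+|\beta^+_n(z^*_n;v)|\bigr)\le\gamma_n\le 2\bigl(|\beta^-_n(z^*_n;v)|+|\beta^+_n(z^*_n;v)|\bigr),\qquad |n|>N_0(v).
\end{equation*}
Consequently, for any submultiplicative weight $\Omega$, the hypothesis $\gamma\in\ell^2(\mathbb{Z},\Omega)$ is \emph{equivalent} to $(|\beta^-_n(z^*_n;v)|+|\beta^+_n(z^*_n;v)|)\in\ell^2(\mathbb{Z},\Omega)$ (the finitely many exceptional $n\le N_0(v)$ contribute a finite amount and are irrelevant to membership in a weighted $\ell^2$ space). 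Thus the theorem follows once we know the implication
\begin{equation}\label{reduction}
(|\beta^-_n(z^*_n;v)|+|\beta^+_n(z^*_n;v)|)\in\ell^2(\mathbb{Z},\Omega)\ \Rightarrow\ v\in H(\Omega)
\end{equation}
under hypothesis (\ref{34.05}), and the analogous exponential-type conclusion (\ref{34.08}) under (\ref{34.100}).

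The second step is to establish (\ref{reduction}). This is precisely the content that, for the symmetric/self-adjoint Dirac operator, is proved in \cite[Theorem 58]{DM15}: there one shows $(|\beta^+_n|+|\beta^-_n|)\in\ell^2(\Omega)\Rightarrow v\in H(\Omega)$ by analyzing the structure of $\beta^\pm_n(z;v)=S^{21},S^{12}$ as given by the explicit series (\ref{23.36})--(\ref{23.37}). The key point I would stress is that this implication, and its proof, never actually uses the self-adjointness relation $\overline{Q}=P$ — it uses only the localization lemma, the analyticity and smallness estimates (\ref{1.36})--(\ref{1.37}) of Proposition~\ref{bprop}, the convergence of the series in Lemma~\ref{lem1}, and, crucially, that $z^*_n$ lies in a fixed small disc so that $|\beta^\pm_n(z^*_n;v)|$ and $|\beta^\pm_n(0;v)|$ (the ``leading'' coefficients $q(2n)$, $p(-2n)$ plus small corrections) are comparable. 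I would therefore re-derive (\ref{reduction}) by running the argument of \cite[Theorem 58]{DM15} verbatim with $P$ and $Q$ treated as independent $L^2([0,\pi])$-functions: write $\beta^+_n(z^*_n)=q(2n)+(\text{tail})$ and $\beta^-_n(z^*_n)=p(-2n)+(\text{tail})$, where the tail is a sum of higher-order terms each bounded by a convolution of $r=(r(m))$ with itself weighted by $\Omega$, use submultiplicativity $\Omega(n+m)\le\Omega(n)\Omega(m)$ to absorb the weight across the convolution, and invoke the hypothesis (\ref{34.05}) (equivalently $\log\Omega(n)/n\searrow 0$) to run the bootstrap/iteration that upgrades an a priori $\ell^2(\Omega^{1/2})$-type bound to the full $\ell^2(\Omega)$ bound on $(r(m))$, hence $P,Q\in H(\Omega)$.

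For the exponential-type case (\ref{34.08}), the same scheme applies but the bootstrap cannot close all the way to $\Omega$ itself; instead, following \cite[Theorem 58]{DM15}, one fixes $\varepsilon>0$ small enough (depending on $\liminf\log\Omega(n)/n>0$ and on $\|r\|$) so that the weight $e^{\varepsilon|n|}$ is submultiplicative, dominated by $\Omega$ past some index, and small enough that the convolution estimates with this weight still contract; then $\gamma\in\ell^2(\Omega)\Rightarrow\gamma\in\ell^2(e^{\varepsilon|n|})$ trivially once $e^{\varepsilon|n|}\le C\,\Omega(n)$, and the previous step gives $v\in H(e^{\varepsilon|n|})$. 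The main obstacle is the second step: verifying carefully that the proof of \cite[Theorem 58]{DM15} is genuinely insensitive to the symmetry assumption — i.e.\ that every place where $\overline{Q}=P$ might have been used can be replaced by the $X$-hypothesis (\ref{g1}) together with Lemma~\ref{lem21}, which guarantees $|\beta^\pm_n|$ is comparable to $|\beta^\pm_n(z^*_n)|$ throughout the disc $K_n$ and that the two quantities $|\beta^+_n|,|\beta^-_n|$ are comparable to each other. Once that robustness is confirmed, the weighted-convolution bookkeeping and the iteration argument are routine and go through unchanged, yielding (\ref{34.06}) and (\ref{34.08}).
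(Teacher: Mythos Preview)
Your proposal is correct and follows essentially the same approach as the paper: reduce via Proposition~\ref{prop1} to the implication $(|\beta^+_n(z^*_n)|+|\beta^-_n(z^*_n)|)\in\ell^2(\Omega)\Rightarrow v\in H(\Omega)$, then invoke the machinery behind \cite[Theorem~58]{DM15} (the paper packages this more concretely via the nonlinear operator $A_N(v)=v+\Phi_N(v)$, so that the hypothesis reads $A_N(v)\in H(\Omega)$, and then cites \cite[Lemma~48 and Proposition~57]{DM15} directly). One small clarification: that second step actually holds for \emph{arbitrary} $L^2$-potentials---Proposition~57 in \cite{DM15} is a general inversion result for $A_N$---so your worry about having to replace the symmetry assumption by the $X$-hypothesis and Lemma~\ref{lem21} there is unnecessary; self-adjointness in \cite[Theorem~58]{DM15} was used \emph{only} to obtain the lower bound $\gamma_n\gtrsim |\beta^+_n|+|\beta^-_n|$, which here is supplied by Proposition~\ref{prop1}.
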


\begin{proof}
In view of Proposition~\ref{prop1}, if $ \gamma =(\gamma_n)_{n\in
\mathbb{Z}}\in \ell^2 (\mathbb{Z}, \Omega)$ then we have $$ \left
(|\beta_n^- (v, z_n^*)|+|\beta_n^- (v, z_n^*)| \right )_{|n|>N} \in
\ell^2 (\Omega).
$$
In other notations,
\begin{equation}
\label{34.10}
\gamma \in \ell^2 (\mathbb{Z}, \Omega) \Rightarrow A_N (v) \in H
(\Omega ),
\end{equation}
where (compare with \cite[(3.52)--(3.54)]{DM15}) the nonlinear
operators $A_N $ are defined by
$$
A_N (v) = v + \Phi_N (v), \quad \Phi_n (v) = \begin{pmatrix} 0 &
\Phi_N^{12}  \\ \Phi^{21} (v) &  0
\end{pmatrix}
$$
with
$$
\Phi_N^{12}= \sum_{|n|>N} (\beta_n^- (z_n^*,v) - p(-n))e^{-2inx}
\quad \text{and} \quad \Phi_N^{21}= \sum_{|n|>N} (\beta_n^+ (z_n^*,v)
- q(n))e^{2inx}.
$$
Now  Theorem \ref{thm1} follows from  
\cite[Lemma 48 and Proposition 57]{DM15}  in the same way as \cite[Theorem 58]{DM15}
(self-adjoint case) -- namely,  by (\ref{34.10}) and Lemma 48 there is a slowly growing wight $\Omega_1 $
such that $A_N (v) \in H(\Omega \cdot \Omega_1),$ 
so Proposition~57  implies  $v \in H(\Omega \cdot \Omega_1) \subset H(\Omega).$  
 
\end{proof}

\section{Riesz bases}
Let $H$ be a Hilbert space. A family of bounded finite--dimensional
projections $\{P_\gamma: H\to H , \, \gamma \in \Gamma\}$ is called
{\em  basis of projections} if
\begin{align}
P_\alpha P_\beta =0  \quad \text{if} \;\; \alpha \neq \beta;\label{r1}\\
x= \sum_{\gamma \in \Gamma} P_\gamma (x) \quad \forall x \in
H,\label{r2}
\end{align}
where the series converge in $H.$

If $(Q_\gamma)$ is a basis of orthogonal projections (i.e.,
$Q_\gamma^* = Q_\gamma $),  the Pythagorian theorem implies  $
\sum_\gamma \|Q_\gamma x\|^2 = \|x\|^2. $

  A family of projections $(P_\gamma, \, \gamma\in \Gamma)$ is
 called {\em Riesz basis of projections} if
\begin{equation}
\label{r3} P_\gamma = A Q_\gamma A^{-1}, \quad \gamma\in \Gamma,
\end{equation}
where $A:H\to H $ is an isomorphism and $(Q_\gamma, \, \gamma\in
\Gamma)$ is a basis of orthogonal projections.

It is well known (see G-K) that a basis of projections $(P_\gamma, \,
\gamma\in \Gamma)$ is a Riesz basis of projections if and only if
there are constants $a, b>0 $ such that
\begin{equation}
\label{r4}       a \|x\|^2  \leq \sum_\gamma \|P_\gamma x\|^2 \leq b
\|x\|^2 \quad x \in H
\end{equation}
(equivalently, if and only if the family $\{P_\gamma, \, \gamma \in
\Gamma\} $ is orthogonal with respect to an equivalent Hilbert norm).

A family of vectors $\{f_\gamma, \, \gamma \in \Gamma\}$ is called a
basis in $H$ if
\begin{equation}
\label{r5} x= \sum_{\gamma \in \Gamma} c_\gamma (x) f_\gamma \quad
\forall x \in H,
\end{equation}
where the series converge in $H$ and the scalars $c_\gamma (x)$ are
uniquely determined.

Obviously, if $(f_\gamma) $ is a basis in $H$ then the system of
one--dimensional projections $P_\gamma (x) = c_\gamma (x) f_\gamma$
is a basis of projections in $H, $ and vice versa, every basis of one
dimensional projections can be obtained in that way from some basis
of vectors.

A system of vectors $\{f_\gamma, \, \gamma \in \Gamma\}$ is called
Riesz basis in $H$ if it has the form
\begin{equation}
\label{r6} f_\gamma = A \,e_\gamma, \quad \gamma \in \Gamma,
\end{equation}
where $A$ is an isomorphism $A: H \to H $ and $e_\gamma, \, \gamma
\in \Gamma$ is an orthonormal basis in $H.$

A basis $\{f_\gamma, \, \gamma \in \Gamma\}$ is a Riesz basis if and
only if there are constants $a, b, c, C >0 $ such that
\begin{equation}
\label{r7a}      c \leq \|f_\gamma \| \leq C  \quad  \forall  \gamma \in \Gamma, \quad  
a \|x\|^2  \leq \sum_\gamma |c_\gamma (x)|^2 \leq b
\|x\|^2,  \quad x \in H
\end{equation}
(equivalently, if and only if the family $\{f_\gamma, \, \gamma \in
\Gamma\} $ is orthogonal with respect to an equivalent Hilbert norm
and $0< \inf \|f_\gamma\|, \;  \sup \|f_\gamma\| < \infty$).

\begin{Lemma}
\label{lemr} Let $(P_\gamma, \, \gamma\in \Gamma)$  be a Riesz basis
of two-dimensional projections in a Hilbert space $H,$  and let
  $f_\gamma, \, g_\gamma \in Ran \,
P_\gamma, $  $\gamma \in \Gamma$   are linearly independent unit
vectors. Then the system $\{f_\gamma, \, g_\gamma, \; \gamma \in
\Gamma \} $ is a Riesz basis if and only if
\begin{equation}
\label{r7}  \kappa :=\sup |\langle  f_\gamma,  g_\gamma \rangle | <
1.
\end{equation}
\end{Lemma}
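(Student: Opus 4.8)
The plan is to reduce the Riesz-basis property of the vector system $\{f_\gamma, g_\gamma\}$ to the uniform two-sided bound \eqref{r7a}, using the fact that $(P_\gamma)$ is already a Riesz basis of projections, hence satisfies \eqref{r4}. First I would fix, in each range $\mathrm{Ran}\,P_\gamma$, an orthonormal basis $\{e_\gamma^1, e_\gamma^2\}$ (which exists since $\dim \mathrm{Ran}\,P_\gamma = 2$). The hypothesis says $f_\gamma, g_\gamma$ are unit vectors in this plane, linearly independent; write them in coordinates relative to $\{e_\gamma^1,e_\gamma^2\}$. Writing $c_\gamma = \langle f_\gamma, g_\gamma\rangle$, the $2\times 2$ change-of-basis (Gram) matrix $G_\gamma$ from the orthonormal pair to $\{f_\gamma,g_\gamma\}$ has singular values bounded away from $0$ and $\infty$ precisely when $|c_\gamma|$ is bounded away from $1$ — indeed the eigenvalues of the Gram matrix $\begin{pmatrix} 1 & c_\gamma \\ \overline{c_\gamma} & 1\end{pmatrix}$ are $1 \pm |c_\gamma|$, so the condition number is controlled uniformly iff $\sup|c_\gamma| < 1$ (the lower bound $1-|c_\gamma| > 0$ already holds for each fixed $\gamma$ by linear independence; uniformity is what \eqref{r7} buys).

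Next I would assemble these block estimates. Given $x \in H$, expand $P_\gamma x = a_\gamma e_\gamma^1 + b_\gamma e_\gamma^2$; since $(e_\gamma^1,e_\gamma^2)_{\gamma}$ is an orthonormal basis of the Hilbert subspace sum and $(P_\gamma)$ satisfies \eqref{r4}, we get $a\|x\|^2 \le \sum_\gamma (|a_\gamma|^2 + |b_\gamma|^2) \le b\|x\|^2$. On the other hand, writing $P_\gamma x = \alpha_\gamma f_\gamma + \beta_\gamma g_\gamma$ (the coefficients $c_\gamma(x)$ of the candidate vector basis are exactly these $\alpha_\gamma,\beta_\gamma$, since $x = \sum_\gamma P_\gamma x$ and uniqueness follows from linear independence within each plane plus the direct-sum structure), the block bound gives $m\,(|\alpha_\gamma|^2 + |\beta_\gamma|^2) \le |a_\gamma|^2+|b_\gamma|^2 \le M\,(|\alpha_\gamma|^2 + |\beta_\gamma|^2)$ with $m,M$ depending only on $\kappa$. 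Summing over $\gamma$ and chaining with \eqref{r4} yields $am\|x\|^2 \le \sum_\gamma |c_\gamma(x)|^2 \le bM\|x\|^2$, which is the second condition in \eqref{r7a}; the norm condition $c \le \|f_\gamma\| = \|g_\gamma\| = 1 \le C$ is trivial. Hence $\{f_\gamma,g_\gamma\}$ is a Riesz basis. For the converse, if $\sup|c_\gamma| = 1$ then along a subsequence $|c_{\gamma_k}| \to 1$, so the Gram matrices degenerate: one can pick unit vectors $x_k \in \mathrm{Ran}\,P_{\gamma_k}$ whose expansion coefficients $|c_{\gamma_k}(x_k)|$ blow up (the small eigenvalue $1 - |c_{\gamma_k}|$ of the Gram matrix forces the inverse to blow up), violating the left inequality of \eqref{r7a} after transporting back through the isomorphism $A$ of \eqref{r3} — note $A$ and $A^{-1}$ are bounded, so they cannot repair an unbounded lower frame constant.

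I expect the main obstacle to be bookkeeping rather than conceptual: one must be careful that the "equivalent Hilbert norm" characterization in \eqref{r4} is applied to the \emph{subspace} $\overline{\bigoplus_\gamma \mathrm{Ran}\,P_\gamma}$, which here is all of $H$, and that the per-block constants $m(\kappa), M(\kappa)$ are genuinely uniform — this is where \eqref{r7} is used and nowhere else. A secondary subtlety is making the converse direction quantitative: it is cleanest to work in the orthogonalized picture $Q_\gamma = A^{-1}P_\gamma A$, transport $f_\gamma, g_\gamma$ to $\tilde f_\gamma = A^{-1}f_\gamma$, $\tilde g_\gamma = A^{-1}g_\gamma$ (no longer unit vectors, but with norms trapped in $[\|A^{-1}\|^{-1}\|A\|^{-1},\,\text{finite}]$ — actually between $\|A\|^{-1}$ and $\|A^{-1}\|$), observe that $|\langle \tilde f_\gamma, \tilde g_\gamma\rangle / (\|\tilde f_\gamma\|\|\tilde g_\gamma\|)|$ stays bounded away from $1$ iff $\kappa < 1$ (since $A$ distorts angles only boundedly), and then invoke the standard fact that an orthogonal basis of planes together with pairs of vectors in each plane forms a Riesz basis iff the normalized inner products stay off $1$. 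This last elementary Hilbert-space fact — essentially the $N=2$ case computed explicitly via the Gram determinant $1 - |c_\gamma|^2$ — is the computational heart, and it is short.
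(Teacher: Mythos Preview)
Your proof is correct and follows essentially the same route as the paper: for sufficiency, the paper writes the per-block Gram inequality directly as $(1-\kappa)\bigl(|f^*_\gamma(x)|^2+|g^*_\gamma(x)|^2\bigr)\le \|P_\gamma x\|^2\le (1+\kappa)\bigl(|f^*_\gamma(x)|^2+|g^*_\gamma(x)|^2\bigr)$ and chains with \eqref{r4}, which is exactly your Gram-eigenvalue argument minus the auxiliary orthonormal basis $\{e_\gamma^1,e_\gamma^2\}$; for necessity, the paper observes that the oblique one-dimensional projections satisfy $\|P^1_\gamma\|^2\ge (1-|\langle f_\gamma,g_\gamma\rangle|^2)^{-1}$, which is the same blow-up you identify. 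Two small remarks: your detour through the isomorphism $A$ in the converse is unnecessary (the direct argument in $H$ already works), and the degenerating Gram matrix makes the coefficients blow up relative to $\|x_k\|$, so it is the \emph{right} inequality of \eqref{r7a} that fails, not the left.
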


\begin{proof}
If the system $\{f_\gamma, \, g_\gamma, \; \gamma \in
\Gamma \} $ is a Riesz basis in $H, $ then
$$
x= \sum_\gamma (f^*_\gamma (x) f_\gamma + g^*_\gamma (x) g_\gamma),
\quad x \in H,
$$
where $f^*_\gamma, g^*_\gamma$ are the conjugate functionals. In view of 
(\ref{r7a}),  the one-dimensional projections
$$
P^1_\gamma (x) =f^*_\gamma (x) f_\gamma,  \quad P^2_\gamma
(x)=g^*_\gamma (x) g_\gamma
$$
are uniformly bounded. On the other hand, it is easy to see that
$$
\|P^1_\gamma\|^2 \geq \left ( 1- |\langle  f_\gamma,  g_\gamma
\rangle |^2 \right )^{-1}, \quad \|P^2_\gamma\|^2 \geq \left (1-
|\langle  f_\gamma,  g_\gamma \rangle |^2 \right )^{-1},$$ so
(\ref{r7}) holds.

Conversely, suppose (\ref{r7}) holds. Then we have for every $\gamma
\in \Gamma$
$$
(1-\kappa) \left (  |f^*_\gamma (x)|^2 +|g^*_\gamma (x)|^2 \right )
\leq \|P_\gamma (x)\|^2 \leq (1+\kappa) \left (  |f^*_\gamma (x)|^2
+|g^*_\gamma (x)|^2 \right )
$$
which implies, in view of (\ref{r4}),
$$
\frac{a}{1+\kappa} \|x\|^2  \leq \sum_\gamma \left (  |f^*_\gamma
(x)|^2 +|g^*_\gamma (x)|^2 \right ) \leq \frac{b}{1-\kappa} \|x\|^2.
$$
Therefore, (\ref{r7}) holds, which means that the system $\{f_\gamma,
\, g_\gamma, \; \gamma \in \Gamma \} $ is a Riesz basis in $H.$
\end{proof}

In view of Lemma~\ref{loc}, the Dirac operators (\ref{001})
 with  $L^2$-potentials
$$ v(x) = \begin{pmatrix} 0 & P(x) \\ Q(x) & 0 \end{pmatrix},
 \quad P,Q \in L^2 ([0,\pi]), $$ considered on $[0,\pi]$
with periodic or antiperiodic boundary conditions  have discrete
spectra, and the Riesz projections
\begin{equation}
\label{r11} S_N = \frac{1}{2\pi i} \int_{ \partial R_N }
(z-L_{Per^\pm})^{-1} dz, \quad P_n = \frac{1}{2\pi i} \int_{ |z-n|=
\frac{1}{4} } (z-L_{Per^\pm})^{-1} dz
\end{equation}
are well--defined for  $|n| \geq N$ if $N $ is sufficiently large.

By \cite[Theorem 3]{DM20}),
\begin{equation}
\label{r12} \sum_{|n| > N} \|P_n - P_n^0\|^2 < \infty,
\end{equation}
where $P_n^0, \, n \in \mathbb{Z},$ are the Riesz projections of the
free operator. Moreover, the Bari--Markus criterion implies (see
Theorem 9 in \cite{DM20}) that the spectral Riesz decompositions
\begin{equation}
\label{r13}
 f = S_N f +  \sum_{|n| >N} P_n f, \quad  \forall
 f \in L^2 \left ([0,\pi], \mathbb{C}^2 \right ),
\end{equation}
converge unconditionally.  In other words, $\{S_N, \; P_n, \, |n|>N\}
$ is a Riesz projection basis in the space $L^2 \left ([0,\pi],
\mathbb{C}^2 \right ).$

Each of the projections $P_n, \; |n|>N,$ is two-dimensional, and if
$v \in X $ then for large enough $N$ each two-dimensional block $Ran
\, P_n$ consists of eigenfunctions only. In the next theorem, we show
that if $v \in X, $ then it is possible to build a Riesz basis of
eigenfunctions in $H = \bigoplus_{|n|>N} Ran (P_n) $ by "splitting"
two-dimensional blocks $Ran (P_n). $

\begin{Theorem}
\label{thm0}  If $v \in X,$  i.e., if there is $c>0$ such that for
sufficiently large $|n|$ (where $n$ is even if $bc= Per^+$ or odd if
$bc= Per^-$)
\begin{equation} \label{a1}   c^{-1} |\beta^+_n
(z^*_n;v)| \leq |\beta^-_n (z^*_n;v)| \leq  c \,|\beta^+_n
(z^*_n;v)|,
\end{equation}
then there exists a Riesz basis in $L^2 ([0,\pi],\mathbb{C}^2)$ which
consists of eigenfunctions and at most finitely many associated
functions of the operator $L_{Per^\pm}(v).$
\end{Theorem}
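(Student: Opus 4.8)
The plan is to retain the Riesz projection basis $\{S_N,\ P_n,\ |n|>N\}$ of $L^2([0,\pi],\mathbb{C}^2)$ furnished by \eqref{r13}, to peel off the finite--dimensional invariant block $Ran(S_N)$, and to manufacture a Riesz basis of $\mathcal{K}_N:=\overline{\bigoplus_{|n|>N}Ran(P_n)}$ by ``splitting'' each two--dimensional block $Ran(P_n)$ into two eigenfunctions via Lemma~\ref{lemr}. After enlarging $N$ we may assume, using Corollary~\ref{cor3}, that for every $|n|>N$ the space $Ran(P_n)$ is two--dimensional and consists of eigenfunctions only and that \eqref{g4} holds there. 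Everything then reduces to choosing, for each $n$ with $|n|>N$, two linearly independent unit eigenvectors $f_n,g_n\in Ran(P_n)$ with $\sup_{|n|>N}|\langle f_n,g_n\rangle|<1$: by Lemma~\ref{lemr} the system $\{f_n,g_n:|n|>N\}$ is then a Riesz basis of $\mathcal{K}_N$, and adjoining to it a Jordan basis of $L_{Per^\pm}|_{Ran(S_N)}$ -- finitely many eigenfunctions and finitely many associated functions -- gives the asserted Riesz basis of $L^2([0,\pi],\mathbb{C}^2)$, since a Riesz basis of a subspace together with a basis of a finite--dimensional complement is again a Riesz basis.

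To make the choice I would distinguish two cases according to whether $\beta_n^+(z_n^*;v)$ and $\beta_n^-(z_n^*;v)$ vanish; by \eqref{a1} these vanish only simultaneously. If both vanish, then $\gamma_n=0$ by \eqref{g4}, hence $\lambda_n^+=\lambda_n^-$, and (as in the proof of Corollary~\ref{cor3}) this common eigenvalue has geometric multiplicity $2$, so $Ran(P_n)$ is an eigenspace and one takes $f_n,g_n$ to be any orthonormal basis of it, with $\langle f_n,g_n\rangle=0$. If $\beta_n^\pm(z_n^*;v)\neq0$, then $\gamma_n\neq0$ by \eqref{g4}, so $\lambda_n^-\neq\lambda_n^+$ are simple eigenvalues, and one takes $f_n,g_n$ to be the unit eigenfunctions belonging to $\lambda_n^-$ and $\lambda_n^+$ (unique up to a unimodular factor); their linear independence is automatic.

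The heart of the matter, and the only place where the hypothesis $v\in X$ is really used, is to bound $|\langle f_n,g_n\rangle|$ away from $1$ in this second case. By the Lyapunov--Schmidt reduction (Lemma~\ref{lem1} and \cite[Section 2.4]{DM15}) the eigenfunction for $\lambda_n^\pm=n+z_n^\pm$ is $\varphi_n^\pm=u_n^\pm+w_n^\pm$, where $u_n^\pm\in E_n^0$ is an eigenvector of the reduced matrix $\left[\begin{smallmatrix}\alpha_n & \beta_n^-\\ \beta_n^+ & \alpha_n\end{smallmatrix}\right]$, evaluated at $z=z_n^\pm$, for the eigenvalue $z_n^\pm$, and $\|w_n^\pm\|\le\varepsilon_n\|u_n^\pm\|$ with $\varepsilon_n\to0$. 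In coordinates with respect to a fixed orthonormal basis of $E_n^0$ one gets $u_n^\pm\propto(\beta_n^-(z_n^\pm),\,\mu_n^\pm)$, where $\mu_n^\pm:=z_n^\pm-\alpha_n(z_n^\pm)$ and $(\mu_n^\pm)^2=\beta_n^+(z_n^\pm)\beta_n^-(z_n^\pm)$. Put $\beta^\pm:=\beta_n^\pm(z_n^*;v)$. Since $|z_n^\pm-z_n^*|=\gamma_n/2$ and, by \eqref{g4} combined with \eqref{a1}, $\gamma_n\asymp|\beta^-|\asymp|\beta^+|$, the derivative bound \eqref{1.37} yields $\beta_n^-(z_n^\pm)=\beta^-(1+o(1))$ and $(\mu_n^\pm)^2=\beta^+\beta^-(1+o(1))$; moreover $\mu_n^+-\mu_n^-=(z_n^+-z_n^-)(1+o(1))$, whose modulus $\gamma_n$ is comparable to $|\mu_n^\pm|=\sqrt{|\beta^+\beta^-|}\,(1+o(1))$, which forces the two square roots to be of opposite sign, i.e. $\mu_n^+=-\mu_n^-(1+o(1))$. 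Consequently, in $E_n^0\cong\mathbb{C}^2$,
\[
\frac{|\langle u_n^+,u_n^-\rangle|}{\|u_n^+\|\,\|u_n^-\|}
=\frac{\bigl|\,|\beta^-|-|\beta^+|\,\bigr|}{|\beta^-|+|\beta^+|}+o(1)
\le\frac{c-1}{c+1}+o(1),
\]
where we take $c\ge1$ without loss of generality and the comparabilities $|\beta^-|^2\asymp|\beta^+||\beta^-|=|\mu_n^\pm|^2(1+o(1))$ absorb all error terms. Finally, since $\|w_n^\pm\|=o(\|u_n^\pm\|)$, the $L^2$--inner product of the normalized $f_n,g_n$ differs from the left--hand side by $o(1)$, so $|\langle f_n,g_n\rangle|\le\frac{c-1}{c+1}+o(1)$; after one further enlargement of $N$ we obtain $\sup_{|n|>N}|\langle f_n,g_n\rangle|\le\frac{1}{2}\bigl(1+\frac{c-1}{c+1}\bigr)<1$, and Lemma~\ref{lemr} applies.

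The step I expect to be the main obstacle is exactly this asymptotic analysis of $u_n^\pm$: one must show that although the eigenvalues $\lambda_n^\pm$ merge whenever $\gamma_n\to0$, the corresponding eigenfunctions do \emph{not} become collinear, and that the $L^2$--angle between them is controlled by the $\mathbb{C}^2$--angle between the reduced vectors $u_n^\pm$. Both facts fail for a general potential; here they are rescued by $v\in X$, which simultaneously gives $|\beta_n^+|\asymp|\beta_n^-|$ and, through Proposition~\ref{prop1}, the matching two--sided estimate $\gamma_n\asymp|\beta_n^+|+|\beta_n^-|$ -- without the lower bound in \eqref{g4} the comparison $|\mu_n^+-\mu_n^-|\asymp|\mu_n^\pm|$ would break down.
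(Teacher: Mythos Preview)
Your proof is correct and follows essentially the same architecture as the paper's: reduce to the two--dimensional block $E_n^0$ via Lyapunov--Schmidt, identify the reduced eigenvectors of the matrix $\left[\begin{smallmatrix}\alpha_n & \beta_n^-\\ \beta_n^+ & \alpha_n\end{smallmatrix}\right]$, and bound their angle away from zero using \eqref{a1} and \eqref{g4}, then invoke Lemma~\ref{lemr}. The only cosmetic differences are that the paper passes to $E_n^0$ by projecting with $P_n^0$ (using \eqref{r12}) rather than writing $\varphi=u+w$, and it parametrizes the two eigenvectors via an explicit branch of $\sqrt{\beta_n^-/\beta_n^+}$ and an angle $\psi_n\to0$ instead of your direct argument that $\mu_n^+=-\mu_n^-(1+o(1))$; your route is slightly more economical and yields the marginally sharper bound $\tfrac{c-1}{c+1}$ in place of the paper's $\sqrt{\tfrac{1+16c^2}{(1+4c)^2}}$.
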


{\em Remark.}  To avoid any confusion, let us emphasize that in
Theorem~\ref{thm0} two {\em independent} theorems are stacked
together: one for the case of periodic boundary conditions $Per^+$
(where we consider only even $n$), and another one for the case of
antiperiodic  boundary conditions $Per^-$ (where we consider only odd
$n$).

\begin{proof}

Let $N$ be chosen so large that the formula (\ref{g4}) in
Proposition~\ref{prop1} holds for $|n|>N$  (with a constant $c$
coming from (\ref{a1})), and the range $Ran (P_n)$ consists of
eigenfunctions only. In view of Corollary~\ref{cor3} such choice of
$N$ is possible. Moreover, we may assume without loss of generality
that $N$ is so large that the estimates (\ref{g2}) in
Lemma~\ref{lem21} holds for $|n|>N.$

We have the following two cases:

(a)  $\beta^-_n (z_n^*)= \beta^+_n (z_n^*)= 0;$

(b) $\beta^-_n (z_n^*)\neq 0, \; \beta^+_n (z_n^*)\neq  0.$

In Case (a) it follows from (\ref{g4}) that $\gamma_n = 0,$ so
$\lambda_n^*= n+z_n^*$ is a double eigenvalue of geometric
multiplicity two. In this case we choose eigenfunctions $f(n),
g(n)\in Ran (P_n) $ so that
\begin{equation} \label{a2}
\|f(n)\|=\|g(n)\|=1, \quad \langle f(n), g(n) \rangle =0.
\end{equation}

In Case (b) we have $\gamma_n \neq 0$ by Proposition~\ref{prop1}, so
$\lambda_n^- $ and $ \lambda_n^+$ are simple eigenvalues.  Now we
choose corresponding eigenvectors $f(n), g(n)\in Ran (P_n) $ so that
\begin{equation} \label{a2a}
\|f(n)\|=\|g(n)\|=1, \quad L_{Per^\pm}(v) f(n) =\lambda_n^- f(n),
\quad L_{Per^\pm}(v) g(n) =\lambda_n^+ g(n).
\end{equation}

In view of (\ref{r13}), to prove the theorem it is enough to show
that the system of eigenfunctions $\{f(n), g(n), \; |n|>N \}$ (where
$n$ is even for $bc= Per^+$ and odd for $bc= Per^-$) is a Riesz basis
in the space $H = \bigoplus_{|n|>N} Ran (P_n).$ In view of (\ref{r7})
in Lemma~\ref{lemr} it is enough to check that
$$
\sup_{|n|>N} |\langle f(n), g(n) \rangle  | < 1.
$$
Obviously, we need to consider only $n$ falling into Case (b). Let
$M$ be the set of all (even for $bc=Per^+$ or odd for $bc=Per^-$) $n$
such that $|n|>N $ and  (b) holds. Next we show that
\begin{equation} \label{a3}
\sup_{M} |\langle f(n), g(n) \rangle  | < 1.
\end{equation}

By Lemma \ref{lem21} the quotient $\eta_n (z)= \beta^-_n
(z)/\beta^+_n (z) $ is a well defined analytic function on a
neighborhood of the disc $K_n=\{z: \;|z-z^*_n| \leq \gamma_n \}.$
Moreover, in view of (\ref{g2}) and (\ref{a1}), we have
\begin{equation} \label{a4}
\frac{1}{4c}  \leq |\eta_n (z)| \leq 4c \quad \text{for} \quad n\in
M, \; z \in K_n.
\end{equation}
Since $\eta_n (z)$ does not vanish in $K_n ,$ there is an appropriate
branch $\text{Log}$ of $\log z $ (which depend on $n$) defined on a
neighborhood of $\eta_n (K_n). $  We set
$$
\text{Log} \, (\eta_n (z)) = \log |\eta_n (z)| + i \varphi_n (z);
$$
then
\begin{equation}
\label{a14} \eta_n (z)= \beta^-_n (z)/\beta^+_n (z) =|\eta_n (z)|
e^{i \varphi_n (z)}
\end{equation}
so the square root $\sqrt{\beta^-_n (z)/\beta^+_n (z)}$ is a well
defined as analytic function on a neighborhood of $K_n$ by
\begin{equation}
\label{a15} \sqrt{\beta^-_n (z)/\beta^+_n (z)}= \sqrt{|\eta_n
(z)|}e^{\frac{i}{2} \varphi_n (z)}.
\end{equation}

Now the basic equation (\ref{be}) splits into the following two
equations
\begin{eqnarray}
\label{a17} z=\zeta_n^+ (z):=  a(n,z) + \beta^+_n (z)
 \sqrt{\beta^-_n (z)/\beta^+_n (z)}, \\
\label{a18} z=\zeta_n^- (z):=  a(n,z) - \beta^+_n (z) \sqrt{\beta^-_n
(z)/\beta^+_n (z)}.
\end{eqnarray}
 For large enough $n,$ each of the equations (\ref{a17})
and (\ref{a18}) has exactly one root in the disc $D=\{z:\,
|z|<1/4\}.$ Indeed, in view of (\ref{1.37}),
$$
\sup_{|z|\leq 1/2} \left | d \zeta_n^{\pm}/dz \right | \to 0 \quad
\text{as} \quad n \to \infty.
$$
Therefore, for large enough $n$ each of the functions $\zeta_n^\pm $
is a contraction on the disc $K_n, $  which implies that each of the
equations (\ref{a17}) and (\ref{a18}) has at most one root in the
disc $K_n.$

On the other hand, by Lemma~\ref{loc} for large enough $n$ the basic
equation has two simple roots in $K_n, $ which implies that each of
the equations (\ref{a17}) and (\ref{a18}) has exactly one root in the
disc $K_n.$

For large enough $n,$ let $z_1 (n) $ (respectively $z_2 (n) $) be the
only root of the equation (\ref{a17}) (respectively (\ref{a18})) in
the  disc $D.$ Of course, we have either $z_1 (n) = \lambda_n^- -n,
\; z_2 (n) = \lambda_n^+ -n$ or
 $z_1 (n) = \lambda_n^+ -n, \; z_2 (n) = \lambda_n^- -n.$ Therefore,
\begin{equation}
\label{a24} |z_1 (n) - z_2 (n)| = \gamma_n = |\lambda_n^+
 -  \lambda_n^-|.
\end{equation}

We set
\begin{equation}
\label{c24} f^0(n)= P_n^0 f(n), \quad  g^0(n)= P_n^0 g(n).
\end{equation}
From (\ref{r12}) it follows that $\|P_n - P_n^0\| \to 0.$  Therefore,
$$
\|f(n) - f^0(n) \| = \|(P_n - P_n^0)f(n) \| \leq \|P_n - P_n^0\| \to
0
$$
and $\|g(n) -g^0 (n)\| \to 0, \; |\langle f(n)-f^0(n), g(n)-g^0 (n)
\rangle| \to 0.$  Since $ \|f (n)\|^2 =\|f^0 (n)\|^2+ \|f(n) - f^0(n)
\|^2 $ and $ \langle f(n), g(n) \rangle = \langle f^0(n), g^0(n)
\rangle + \langle f(n)-f^0(n), g(n)-g^0 (n) \rangle, $  we get
\begin{equation}
\label{a23} \|f^0(n)\|, \,\|g^0(n)\|\to 1,  \quad \limsup_{n\to
\infty} |\langle f(n), g(n) \rangle |=\limsup_{n\to \infty} |\langle
f^0(n), g^0(n) \rangle |.
\end{equation}

Then, by \cite[Lemma 21]{DM15} (see formula (2.4)),  $f^0 (n)$ is an
eigenvector of the matrix $\begin{pmatrix} \alpha_n (z_1) & \beta_n^+
(z_1)
\\ \beta_n^- (z_1) & \alpha_n (z_1)
\end{pmatrix}$ corresponding to its eigenvalue $z_1=z_1 (n), $ i.e.,
$$ \begin{pmatrix} \alpha_n (z_1)-z_1 &  \beta_n^+ (z_1) \\ \beta_n^- (z_1) &
\alpha_n (z_1)-z_1  \end{pmatrix} f^0 (n) = 0. $$ Therefore, $f^0(n)
$ is proportional to the vector $\left (1, \frac{z_1 - \alpha_n
(z_1)}{\beta_n^+ (z_1)} \right )^T.$ Taking into account (\ref{a14}),
(\ref{a15}) and (\ref{a17}) we obtain
\begin{equation}
\label{a25} f^0 (n) = \frac{\|f^0 (n)\|} {\sqrt{1+ |\eta_n (z_1)|}}
\begin{pmatrix} 1\\ \sqrt{|\eta_n (z_1)|}e^{\frac{i}{2}\varphi(z_1)}
\end{pmatrix}.
\end{equation}
In an analogous way, from (\ref{a14}), (\ref{a15}) and (\ref{a18}) it
follows
\begin{equation}
\label{a26} g^0 (n) = \frac{\|g^0 (n)\|} {\sqrt{1+ |\eta_n (z_2)|}}
\begin{pmatrix} 1\\ -\sqrt{|\eta_n (z_2)|}e^{\frac{i}{2}\varphi(z_2)}
\end{pmatrix}.
\end{equation}
Now,  (\ref{a25}) and (\ref{a26}) imply
\begin{equation}
\label{a27} \langle f^0 (n), g^0 (n) \rangle  =\|f^0 (n)\|\|g^0 (n)\|
\frac {1-\sqrt{|\eta_n (z_1)|}\sqrt{ |\eta_n (z_2)|} \, e^{i\psi_n }}
{\sqrt{1+ |\eta_n (z_1)|}\sqrt{1+ |\eta_n (z_2)|}},
\end{equation}
where
$$\psi_n = \frac{1}{2}[\varphi_n (z_1(n)) -\varphi_n (z_2(n)].
$$

Next we explain that
\begin{equation}
\label{a31} \psi_n \to 0 \quad \text{as} \;\; n \to \infty.
\end{equation}
Since $\varphi_n = Im \, \left ( \text{Log}
 \,\eta_n  \right )  $ we obtain, taking into account (\ref{a24}),
$$
|\varphi_n (z_1(n)) -\varphi_n (z_2(n)| \leq \sup_{[z_1,z_2]} \left |
\frac{d}{dz} \left (\text{Log} \,\eta_n \right )  \right | \cdot
\gamma_n,
$$
where $[z_1, z_2]$ denotes the segment with end points $z_1 = z_1
(n)$ and $z_2 = z_2 (n).$

By (\ref{1.37}) in Proposition \ref{bprop} and (\ref{g2}) in Lemma
\ref{lem21} we estimate
$$
\frac{d}{dz} \left (\text{Log} \,\eta_n \right )= \frac{1}{\beta^-_n
(z)} \frac{d\beta^-_n}{dz} (z) -\frac{1}{\beta^+_n (z)}
\frac{d\beta^+_n}{dz} (z), \quad z \in [z_1,z_2],
$$
as follows:
$$
\left | \frac{d}{dz} \left (\text{Log} \,\eta_n \right )  \right |
\leq \frac{\varepsilon_n}{|\beta^-_n (z_n^*)|}
+\frac{\varepsilon_n}{|\beta^+_n (z_n^*)|}
$$
where $\varepsilon_n= C \left ( \mathcal{E}_{|n|} (r)
+\frac{1}{\sqrt{|n|}} \right )  \to 0 \quad \text{as}\; n \to
\infty.$ Therefore, from (\ref{a1}) and (\ref{g4}) it follows
$$
|\varphi_n (z_1(n)) -\varphi_n (z_2(n)| \leq 4(1+c)\cdot
\varepsilon_n \to 0,
$$
i.e., (\ref{a31}) holds.

From (\ref{a27}) it follows
\begin{equation}
\label{a32} |\langle f^0 (n), g^0 (n) \rangle |^2=\|f^0 (n)\|^2 \|g^0
(n)\|^2 \cdot \Pi_n ,
\end{equation}
with
\begin{equation}
\label{a33} \Pi_n = \frac{1+|\eta_n (z_1)| |\eta_n
(z_2)|-2\sqrt{|\eta_n (z_1)||\eta_n (z_2)|} \cos \psi_n}{\left (1+
|\eta_n (z_1)| \right )\left (1+ |\eta_n (z_2)| \right )}.
\end{equation}

 If (\ref{a1}) holds, then  (\ref{a31}) implies  $\cos \psi_n
>0 $ for large enough $n,$  so taking into account that $\|f^0 (n)\|,
\|g^0 (n)\|\leq 1, $ we obtain by (\ref{a4})
$$ |\langle f^0 (n), g^0 (n) \rangle
|^2 \leq \Pi_n \leq
 \frac{1+|\eta_n (z_1)|
|\eta_n (z_2)| }{\left (1+ |\eta_n (z_1)|
 \right ) \left (1+ |\eta_n (z_2)|
 \right )} \leq \delta < 1
$$
with
$$
\delta= \sup \left \{\frac{1+xy}{(1+x)(1+y)}: \; \frac{1}{4c} \leq
x,y \leq 4c \right \} = \frac{1+16c^2}{(1+4c)^2}.
$$
 Now (\ref{a23}) implies that (\ref{a3}) holds, hence the
system of normalized eigenfunctions and associated functions is a
(Riesz) basis in $L^2 ([0,\pi]).$ The proof is complete.
\end{proof}

In fact, Theorem \ref{thm0} says that (\ref{a1}) is a sufficient
condition which guarantees

 (i) the system of root functions of $L_{Per^\pm} (v)$  is
  complete and has at most finitely many
  linearly independent associated functions;

 (ii) there exists a Riesz bases in $L^2 ([0,\pi], \mathbb{C}^2)$
which consists of root functions of the operator $L_{Per^\pm} (v).$

Besides the case $v \in X_t $ (see the next section for definition of
the class of potentials $X_t$)  it seems difficult to verify the
condition (\ref{a1}). Moreover, since the points $z_n^*$ are not
known in advance, in order to check (\ref{a1}) one has to consider
the values of the functions $\beta^\pm_n (z) $ for all $z$ close to
0.

In the next theorem we consider potentials $v$ such that for large
enough $|n|$
\begin{equation}
\label{c2}
 \beta_n^- (0)\neq 0, \quad \beta_n^+ (0)\neq 0
\end{equation}
and
\begin{equation}
\label{c3} \exists d>0 : \;\;  d^{-1}|\beta_n^\pm (0)| \leq
|\beta_n^\pm (z)| \leq d \, |\beta_n^\pm (0)| \quad   \forall z\in
K_n=\{z: |z-z_n^*|\leq \gamma_n \}
\end{equation}
(notice that $K_n $ consists of one point only if $\gamma_n =0 $).
Then (i) holds, and moreover, the condition (\ref{a1}) is necessary
and sufficient for existence of Riesz bases consisting of root
functions of the operator $L_{Per^\pm} (v).$

\begin{Theorem}
\label{thm00}  Suppose $v$ is a Dirac potential such that  (\ref{c2})
and (\ref{c3}) hold. Then

 (a) the system of root functions of $L_{Per^\pm} (v)$  is
  complete and has at most finitely many
  linearly independent associated functions;

  (b) if
\begin{equation}
\label{c4} 0 < a:=\liminf  \frac{|\beta_n^- (0)|}{|\beta_n^+ (0)|},
\qquad b:= \limsup  \frac{|\beta_n^- (0)|}{|\beta_n^+ (0)|}<\infty,
\end{equation}
where $n$ is even if $bc= Per^+$ or odd if $bc= Per^-,$ then there
exists a Riesz basis in $L^2 ([0,\pi], \mathbb{C}^2)$ which consists
of root functions of the operator $L_{Per^\pm} (v);$

(c) if (\ref{c4}) fails, then there is no basis in $L^2 ([0,\pi],
\mathbb{C}^2)$ consisting of root functions of the operator
$L_{Per^\pm} (v).$

\end{Theorem}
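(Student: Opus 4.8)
The plan is to show that the hypotheses (\ref{c2}) and (\ref{c3}) reduce this theorem to the machinery already developed for Theorem~\ref{thm0} and Lemma~\ref{lemr}, with condition (\ref{c4}) playing the role of (\ref{a1}). First I would establish part (a): condition (\ref{c3}) forces $|\beta_n^\pm(z_n^*)| \asymp |\beta_n^\pm(0)|$, and since by (\ref{c2}) both $\beta_n^+(0)$ and $\beta_n^-(0)$ are nonzero for large $|n|$, the same holds at $z_n^*$. Then the estimate (\ref{g4}) of Proposition~\ref{prop1}—which is available once (\ref{g1}) holds—gives $\gamma_n \neq 0$ whenever $|n|$ is large, so by Lemma~\ref{lem1}(c) the eigenvalue $\lambda_n^* $ is genuinely split or simple; in either case $Ran(P_n)$ consists of eigenfunctions only for large $|n|$, hence the root system has at most finitely many linearly independent associated functions and is complete by (\ref{r13}). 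But I need (\ref{g1}) itself: this is where (\ref{c4}) (part (b)) or its negation (part (c)) enters. Under (\ref{c4}), for large $|n|$ we have $a/2 \le |\beta_n^-(0)|/|\beta_n^+(0)| \le 2b$, and combining with (\ref{c3}) at $z=z_n^*$ yields (\ref{g1}) with $c = 2bd^2$ (or similar); so $v \in X$, Proposition~\ref{prop1} applies, and part (a) follows as above. For part (a) in the generality stated (without (\ref{c4})), I would note that even if (\ref{c4}) fails one still gets $\beta_n^\pm(z_n^*)\neq0$ from (\ref{c2})–(\ref{c3}) directly, and a version of the argument in Corollary~\ref{cor3} / Proposition~\ref{prop1} restricted to the indices where $\beta_n^+(z_n^*)\beta_n^-(z_n^*)\neq0$ still gives $\gamma_n\neq0$ there—so associated functions occur only finitely often regardless.

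For part (b): with (\ref{g1}) in hand, Theorem~\ref{thm0} applies verbatim and produces a Riesz basis of eigenfunctions (plus finitely many associated functions). So part (b) is essentially a corollary of part (a)'s verification that (\ref{c4}) $\Rightarrow$ (\ref{g1}) $\Rightarrow v\in X$, followed by citing Theorem~\ref{thm0}.

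For part (c): suppose (\ref{c4}) fails, so the ratios $\rho_n := |\beta_n^-(0)|/|\beta_n^+(0)|$ have a subsequence tending to $0$ or to $\infty$ (along the relevant parity of $n$). By (\ref{c3}) the analytic function $\eta_n(z) = \beta_n^-(z)/\beta_n^+(z)$ is bounded above and below by $d^{\mp1}\rho_n$ on $K_n$, so $|\eta_n(z_n^*)| \to 0$ or $\to\infty$ along that subsequence. Running the computation (\ref{a25})–(\ref{a33}) from the proof of Theorem~\ref{thm0}—which only used that $Ran(P_n^0)$ is spanned by the eigenvectors of the $2\times2$ block and that $\psi_n\to0$ (the latter still holds, since the estimate of $|d(\mathrm{Log}\,\eta_n)/dz|$ in that proof used only (\ref{1.37}) and (\ref{c3})-type bounds, giving $|\varphi_n(z_1)-\varphi_n(z_2)| \le C\varepsilon_n \to 0$)—one finds
\[
|\langle f^0(n), g^0(n)\rangle|^2 = \|f^0(n)\|^2\|g^0(n)\|^2\,\Pi_n,\qquad
\Pi_n = \frac{1 + |\eta_n(z_1)|\,|\eta_n(z_2)| - 2\sqrt{|\eta_n(z_1)|\,|\eta_n(z_2)|}\cos\psi_n}{(1+|\eta_n(z_1)|)(1+|\eta_n(z_2)|)}.
\]
As $|\eta_n| \to 0$ (or $\to\infty$) and $\cos\psi_n \to 1$, the numerator behaves like $1 - 2\sqrt{|\eta_n(z_1)\eta_n(z_2)|} + |\eta_n(z_1)\eta_n(z_2)| = (1-\sqrt{|\eta_n(z_1)\eta_n(z_2)|})^2 \to 1$ while the denominator $\to 1$, so $\Pi_n \to 1$, i.e. $|\langle f(n), g(n)\rangle| \to 1$ along the subsequence. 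For \emph{any} choice of unit eigenvectors $f(n), g(n)$ spanning $Ran(P_n)$ the angle between them is an invariant of the pair of one-dimensional eigenspaces, so this conclusion does not depend on normalization. By (the necessity direction of) Lemma~\ref{lemr}, the system $\{f(n),g(n)\}$ cannot be a Riesz basis; and a standard argument shows that if a root system were a basis at all, the associated sequence of two-dimensional spectral projections would be uniformly bounded, forcing $\sup_n|\langle f(n),g(n)\rangle| < 1$—a contradiction. Hence no basis of root functions exists.

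The main obstacle I anticipate is part (c): one must be careful that the failure of (\ref{c4}) is used correctly (it gives a subsequence, not all $n$), that the vectors $f^0(n), g^0(n)$ are genuinely the $P_n^0$-images of \emph{eigenfunctions} of $L$ (not of the model operator)—this is exactly the content of (\ref{c24})–(\ref{a23}) and the citation to \cite[Lemma~21]{DM15}—and, most delicately, that ``not a Riesz basis'' can be upgraded to ``not a basis.'' The last point requires the fact that the spectral projections $P_n$ are already a Riesz basis of projections (by (\ref{r12}) and the Bari–Markus criterion, as in (\ref{r13})), so any conditional basis of root functions subordinate to this decomposition would have to have uniformly bounded partial-sum projections onto each $Ran(P_n)$, which is incompatible with $|\langle f(n),g(n)\rangle|\to1$. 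I would make this quantitative using the identity $\|P^1_\gamma\|^2 = (1-|\langle f_\gamma,g_\gamma\rangle|^2)^{-1}$ already recorded in the proof of Lemma~\ref{lemr}.
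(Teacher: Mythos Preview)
Your parts (b) and (c) track the paper's proof closely: (b) reduces to Theorem~\ref{thm0} once you observe that (\ref{c4}) together with (\ref{c3}) yields (\ref{g1}), and (c) reruns the eigenvector computation (\ref{a25})--(\ref{a33}) along a subsequence where $|\eta_n|\to0$ or $\infty$ to force $\Pi_n\to1$ and hence $|\langle f(n),g(n)\rangle|\to1$. One correction in (c): your assertion that $\psi_n\to0$ ``still holds'' is not justified. The estimate from Theorem~\ref{thm0} gives
\[
|\varphi_n(z_1)-\varphi_n(z_2)|\ \le\ \varepsilon_n\,\gamma_n\!\left(\frac{1}{|\beta_n^-(z_n^*)|}+\frac{1}{|\beta_n^+(z_n^*)|}\right)
\ \lesssim\ \varepsilon_n\left(\sqrt{|\eta_n|}+\sqrt{1/|\eta_n|}\right),
\]
which is \emph{not} bounded when (\ref{c4}) fails. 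Fortunately the conclusion survives: when $|\eta_n(z_1)|,|\eta_n(z_2)|\to0$ (or $\to\infty$) the cross term $2\sqrt{|\eta_n(z_1)||\eta_n(z_2)|}\cos\psi_n$ in the numerator of $\Pi_n$ is negligible regardless of $\psi_n$, so $\Pi_n\to1$ without any control on $\psi_n$. The paper argues exactly this way (it invokes (\ref{c13}) directly in (\ref{a33})).

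The genuine gap is in part (a). You try to get ``$\gamma_n\neq0$ for large $|n|$'' via Proposition~\ref{prop1} and Corollary~\ref{cor3}, but both rest on (\ref{g1}), which under (\ref{c3}) is equivalent to (\ref{c4}). When (\ref{c4}) fails there is no uniform $c$, and the lower-bound constant $\frac{2\sqrt{c}}{1+4c}$ in (\ref{g4}) degenerates; your sentence ``a version of the argument\ldots still gives $\gamma_n\neq0$'' has no content. The paper does \emph{not} go through Proposition~\ref{prop1} here. Instead it argues directly that the basic equation cannot have a double root in $D$ for large $|n|$: if $z^*$ were such a root then both (\ref{c5}) and its $z$-derivative (\ref{c6}) hold at $z^*$; combining $|z^*-\alpha_n(z^*)|=\sqrt{|\beta_n^+(z^*)\beta_n^-(z^*)|}$ from (\ref{c5}) with the derivative bounds $|\alpha_n'|,|(\beta_n^\pm)'|\le\varepsilon_n$ in (\ref{c6}) yields
\[
2(1-\varepsilon_n)\ \le\ \varepsilon_n\!\left(\left|\tfrac{\beta_n^+(z^*)}{\beta_n^-(z^*)}\right|^{1/2}+\left|\tfrac{\beta_n^-(z^*)}{\beta_n^+(z^*)}\right|^{1/2}\right),
\]
and the paper then invokes (\ref{c3}) to bound the right side by $2d\,\varepsilon_n$, contradicting $\varepsilon_n\to0$. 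This direct double-root analysis is the missing ingredient in your proof of (a); you should replace the appeal to Proposition~\ref{prop1}/Corollary~\ref{cor3} by it.
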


{\em Remark.}    Although the conditions (\ref{c2})--(\ref{c4}) look
too technical there is -- after \cite{DM15, DM10}  -- a well
elaborated technique to evaluate these parameters and check these
conditions. To compare with the case of Hill operators with
trigonometric polynomial coefficients -- see \cite{DM25a,DM25}.

\begin{proof}
By Proposition \ref{bprop}, the basic equation
\begin{equation}
\label{c5} (z-\alpha_n (z))^2 = \beta^+_n (z) \beta^-_n (z),
\end{equation}
has exactly two roots (counted with multiplicity) in the disc $D=\{z:
|z|< 1/4\}. $ Therefore, a number $\lambda = n +z $ with $ z \in D $
is a periodic or antiperiodic eigenvalue of algebraic multiplicity
two if and only if $z \in D$ satisfies the system of two equations
(\ref{c5}) and
\begin{equation}
\label{c6} 2(z-\alpha_n (z)) \frac{d}{dz} \left (z - \alpha_n (z)
\right ) =\frac{d}{dz} \left ( \beta^+_n (z) \beta^-_n (z)\right ).
\end{equation}

In view of \cite[Theorem 9]{DM20}, the system of root functions of
the operator $L_{Per^\pm} (v)$ is complete, so Part (a) of the
theorem will be proved if we show that there are at most finitely
many $n$  such that the system (\ref{c5}), (\ref{c6}) has a solution
$z \in D.$

Suppose $z^* \in D $ satisfies (\ref{c5}) and (\ref{c6}); then it
follows $z^* \in K_n.$  By (\ref{1.37}),  for each $z \in D $
\begin{equation}
\label{c7} \left |\frac{d\alpha_n}{dz}  (z) \right | \leq
\varepsilon_n, \quad  \left |\frac{d\beta^\pm_n}{dz}  (z)\right |
\leq \varepsilon_n \quad \text{with} \;\; \varepsilon_n \to 0
\;\;\text{as} \;\; |n| \to \infty.
\end{equation}
In view of (\ref{c7}), the equation (\ref{c6}) implies
$$
2\left | z^*-\alpha_n (z^*) \right | (1- \varepsilon_n) \leq
\varepsilon_n  \left (|\beta^+_n (z^*)|+|\beta^-_n (z^*)|   \right ).
$$
By (\ref{c5}),
$$
\left | z^*-\alpha_n (z^*) \right |= |\beta_n^+ (z^*) \beta_n^-
(z^*)|^{1/2},
$$
so it follows, in view of (\ref{c3}),
$$
2(1- \varepsilon_n)\leq \varepsilon_n \left ( \left | \frac{\beta^+_n
(z^*)}{\beta^-_n (z^*)} \right |^{1/2}  +\left | \frac{\beta^-_n
(z^*)}{\beta^+_n (z^*)} \right |^{1/2}  \right ) \leq 2 d
\varepsilon_n.
$$
Since $\varepsilon_n \to 0 $ as $|n| \to \infty, $  the latter
inequality holds for at most finitely many $n,$ which completes the
proof of (a).

If (\ref{c4}) holds, then by Theorem~\ref{thm0} there exists a Riesz
bases in $L^2 ([0,\pi], \mathbb{C}^2)$ which consists of root
functions of the operator $L_{Per^\pm} (v),$ i.e., (b) holds.

 Next, we show that  if (\ref{c4}) fails then there is no
bases in $L^2 ([0,\pi], \mathbb{C}^2)$ which consists of root
functions of the operator $L_{Per^\pm} (v).$

By (a) and Lemma~\ref{loc}, for large enough $|n|, $ say $|n|> N $
there are two simple (periodic for even $n$ and antiperiodic for odd
$n$) eigenvalues $\lambda^-_n $ and $\lambda^+_n $ close to $n.$ Let
us choose corresponding unit eigenfunctions $f(n) $ and $g(n),$
i.e.,
\begin{equation} \label{c10}
\|f(n)\|=\|g(n)\|=1, \quad L_{Per^\pm}(v) f(n) =\lambda_n^- f(n),
\quad L_{Per^\pm}(v) g(n) =\lambda_n^+ g(n).
\end{equation}
The same argument as in the proof of Theorem~\ref{thm0} shows that
there is a bases in $L^2 ([0,\pi], \mathbb{C}^2)$ which consists of
root functions of the operator $L_{Per^\pm} (v)$ if and only if
\begin{equation} \label{c11}
\sup \{ |\langle f(n), g(n) \rangle |: \; |n|>N \} < 1,
\end{equation}
where we consider even $n$ for periodic boundary conditions $bc=
Per^+ $ or odd $n$ for antiperiodic boundary conditions $bc= Per^-.
$

By Lemma (\ref{c3}) the quotient $\eta_n (z)= \beta^-_n
(z)/\beta^+_n (z) $ is a well defined analytic function on a
neighborhood of the disc $\overline{D}$ which does not vanishes on
$\overline{D}.$ Therefore, there is an appropriate branch (depending
on $n$) $\text{Log} $ of $\log z $ defined in a neighborhood of
$\eta_n (\overline{D}).$ We set
$$
\text{Log} (\eta_n (n)) = \log |\eta_n (z)| + i \varphi_n (z);
$$
then (\ref{a14})  holds.

Further we follow the proof of Theorem~\ref{thm0}, after formula
(\ref{a14}). With $f^0(n) $ and $g^0(n) $ given by (\ref{c24}) the
formulas (\ref{a23})--(\ref{a27}) and (\ref{a32}), (\ref{a33})
hold. In view of (\ref{c2}) and (\ref{c3}), if (\ref{c4}) fails then
\begin{equation} \label{c13}
\text{either} \;\; \liminf \left (\inf_{K_n} |\eta_n (z)|\right ) =
0  \quad \text{or} \;\; \limsup \left (\sup_{K_n} |\eta_n (z)|\right
)  = \infty.
\end{equation}
By (\ref{a32}), it follows $ \limsup \, \Pi_n =1, $ so  (\ref{a31})
and (\ref{a23}) imply
$$\limsup \{ |\langle f(n), g(n) \rangle |:\; |n|>N \}=1, $$
i.e., (\ref{c11}) fails. Therefore, if (\ref{c4}) fails there is no
bases in $L^2 ([0,\pi], \mathbb{C}^2)$  consisting of root functions
of the operator $L_{Per^\pm} (v), $  i.e., (c) holds. This completes
the proof.
\end{proof}

 \begin{Example}
If $a, b, A, B \in \mathbb{C}\setminus \{0\}$ and
 \begin{equation}
 \label{c15}
v=\begin{pmatrix} 0  & P  \\  Q  & 0   \end{pmatrix} \quad
\text{with} \quad
 P(x)  = a e^{ 2ix} + b e^{- 2ix}, \quad Q(x) = A e^{ 2ix} + B
e^{ -2ix},
\end{equation}
 then the system of root functions of $L_{Per^\pm}(v)$
 consists eventually of eigenfunctions.

 Moreover, for $bc=Per^-$ this system is a Riesz basis
in $L^2 ([0,\pi], \mathbb{C}^2)$ if  $|aA|=|bB|,$ and it is not a
basis if $|aA| \neq |bB|.$

For $bc=Per^+$ the system of root functions is a Riesz basis always.
\end{Example}

Let us mention that if $bc = Per^+$ then it is easy to see by
(\ref{1.22}), (\ref{23.34}) and (\ref{23.37}) that $\beta_n^\pm (z)=
0 $ whenever defined, so the claim follows by Theorem~\ref{thm0}.

If $bc = Per^-,$ then the result follows from Theorem~\ref{thm00} and
the asymptotics
 \begin{equation}
 \label{c16}
\beta_n^+ (0) = A^{\frac{n+1}{2}}a^{\frac{n-1}{2}}4^{-n+1} \left
[\left ( \frac{n-1}{2}  \right )! \right ]^{-2} \left (1+
O(1/\sqrt{|n|} \right ),
 \end{equation}
\begin{equation}
 \label{c17}
\beta_n^- (0) = b^{\frac{n+1}{2}}B^{\frac{n-1}{2}}4^{-n+1} \left
[\left ( \frac{n-1}{2} \right )! \right ]^{-2} \left (1+
O(1/\sqrt{|n|} \right ).
 \end{equation}
 Proofs of (\ref{c16}), (\ref{c17}) and similar asymptotics,
 related to other trigonometric polynomial potentials and implying
 Riesz bases existence or non-existence, will be given elsewhere
 (see similar results for the Hill-Schr\"odinger operator in
 \cite{DM25a,DM25}).

\section{Density of finite zone potentials in the class $X_t$}

Consider the classes of Dirac potentials
\begin{equation}
\label{4.2}  X_t =\left \{v=
\begin{pmatrix} 0 & P
\\ Q & 0 \end{pmatrix}, \quad Q (x) = t \overline{P(x)}, \quad
 P,Q \in L^2 ([0,\pi]) \right \}, \quad t \in \mathbb{R} \setminus \{0\}.
\end{equation}
If $t=1$ we get the class $X_1$ of symmetric Dirac potentials (which
generate self-adjoint Dirac operators), and $X_{-1} $ is the class of
skew-symmetric Dirac potentials. In this section we show that
\begin{equation}
\label{4.3}  X_t \subset X \quad  \forall \,t \in \mathbb{R}
\setminus \{0\},
\end{equation}
and prove that  finite-zone $X_t$-potentials are dense in $X_t$ for
real $t\neq 0.$

\begin{Lemma}
\label{lem4.3} (a)  The Dirac operators (\ref{001}) with potentials $
v =
\begin{pmatrix} 0  & P  \\   Q  &  0 \end{pmatrix}  $ and $ v_c =
\begin{pmatrix} 0  & cP  \\  \frac{1}{c} Q  &  0 \end{pmatrix},  $
$c \in \mathbb{C}\setminus \{0\},$  are similar. Therefore, $Sp \,
(L_{Per^\pm} (v_c))$ does not depend on $c.$

(b) $ (L_{Per^\pm} (v))^* = L_{Per^\pm} (v^*), \quad v^* =
\begin{pmatrix} 0  & \overline{Q}  \\  \overline{P}  &  0 \end{pmatrix}.  $

(c) If $t\neq 0$ is real and $v\in X_t$ then $v^* = v_t,$  so
\begin{equation}
\label{4.9} Sp \, [(L_{Per^{\pm}}(v))^*] = Sp \, (L_{Per{^\pm}} (v)).
\end{equation}

\end{Lemma}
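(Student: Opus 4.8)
The plan is to handle the three parts in order, since (a) and (b) are exactly the two ingredients that combine to prove (c).

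For (a), I would introduce the constant invertible diagonal matrix $T=\begin{pmatrix} 1 & 0 \\ 0 & c\end{pmatrix}$, $c\in\mathbb{C}\setminus\{0\}$, viewed as a bounded invertible multiplication operator on $L^2([0,\pi],\mathbb{C}^2)$. Because $T$ commutes with $J=\begin{pmatrix}1&0\\0&-1\end{pmatrix}$ and with $d/dx$, one has $T^{-1}\bigl(iJ\frac{d}{dx}+v\bigr)T = iJ\frac{d}{dx}+T^{-1}vT$, and a one-line matrix product gives $T^{-1}vT=v_c$. Since $T$ is a constant matrix, a vector $y$ satisfies $y(\pi)=\pm y(0)$ if and only if $Ty$ does, so $T$ maps the domain of $L_{Per^\pm}(v_c)$ bijectively onto that of $L_{Per^\pm}(v)$; hence $L_{Per^\pm}(v)=T\,L_{Per^\pm}(v_c)\,T^{-1}$, the two operators are similar, and therefore $Sp\,(L_{Per^\pm}(v_c))$ is independent of $c$.

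For (b), I would compute the adjoint directly from the definition of the inner product on $L^2([0,\pi],\mathbb{C}^2)$. Integration by parts in $\langle iJy',z\rangle$ produces the boundary form $\bigl[i y_1(x)\overline{z_1(x)}-i y_2(x)\overline{z_2(x)}\bigr]_0^\pi$ together with $\langle y, iJz'\rangle$, so $iJ\frac{d}{dx}$ is formally self-adjoint; for the potential term the pointwise identity $\langle v(x)y,z\rangle_{\mathbb{C}^2}=\langle y,v(x)^*z\rangle_{\mathbb{C}^2}$ with $v(x)^*=\begin{pmatrix}0&\overline{Q}\\ \overline{P}&0\end{pmatrix}$ does everything. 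The only step that requires genuine care is the domain: one must check that the boundary form vanishes for every $y$ in the $Per^\pm$ domain precisely when $z$ itself satisfies $Per^\pm$ (using that the values $y(0)$ are arbitrary), which identifies the adjoint domain and yields $(L_{Per^\pm}(v))^*=L_{Per^\pm}(v^*)$.

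For (c), when $t$ is real and $v\in X_t$ we have $Q=t\overline{P}$, hence $\overline{Q}=tP$ and $v^*=\begin{pmatrix}0&tP\\ \overline{P}&0\end{pmatrix}$; on the other hand $v_t=\begin{pmatrix}0&tP\\ \frac1t Q&0\end{pmatrix}=\begin{pmatrix}0&tP\\ \overline{P}&0\end{pmatrix}$, so $v^*=v_t$. Combining (b) and (a), $(L_{Per^\pm}(v))^*=L_{Per^\pm}(v^*)=L_{Per^\pm}(v_t)$ is similar to $L_{Per^\pm}(v)$, which gives (\ref{4.9}). I expect no serious obstacle in this lemma; the only subtlety is the boundary-condition bookkeeping for the adjoint in part (b), and everything else reduces to short matrix computations.
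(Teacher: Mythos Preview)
Your proposal is correct and follows essentially the same approach as the paper: for (a) the paper conjugates by the diagonal matrix $C=\begin{pmatrix} c & 0 \\ 0 & 1 \end{pmatrix}$ rather than your $T=\begin{pmatrix} 1 & 0 \\ 0 & c \end{pmatrix}$, which is the same similarity up to a scalar; for (b) the paper simply declares it ``standard'' while you spell out the integration-by-parts and boundary-form argument; and (c) is derived identically by computing $v^*=v_t$ and combining (a) and (b).
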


\begin{proof}
Let $C= \begin{pmatrix} c  & 0  \\  0  &  1
\end{pmatrix};  $  then
$C^{-1} = \begin{pmatrix} 1/c  & 0  \\  0   &  1
\end{pmatrix},  $ and  we have
$$ C L(v) C^{-1} = i CJDC^{-1} + CvC^{-1} = iJD + v_c= L(v_c). $$

Moreover, if $G= \begin{pmatrix}  g_1 \\ g_2
\end{pmatrix} $ satisfies periodic (or antiperiodic)
boundary conditions, $CG =  \begin{pmatrix}  cg_1 \\ g_2
\end{pmatrix} $  satisfies the same boundary conditions, and vice
versa. Thus, the operators $L_{Per^\pm} (v_c)$ and $L_{Per^\pm} (v)$
are similar.

Part (b) is standard.

Since  $$ v^* =\begin{pmatrix} 0 & \overline{Q}
\\ \overline{P} & 0 \end{pmatrix} =
\begin{pmatrix} 0 & tP
\\ \frac{1}{t} Q & 0 \end{pmatrix} = v_t,
$$  (\ref{4.9}) follows from Part (a).

\end{proof}

If $v \in X_t$  and $c\neq 0 $ is real, then
\begin{equation}
\label{4.21}  v_c = CvC^{-1} =
\begin{pmatrix} 0 & cP
\\ \frac{t}{c} \overline{P}  & 0 \end{pmatrix} \in X_{t/c^2}.
\end{equation}
This observation and \ref{4.9} lead to the following. specification
of Lemma~\ref{loc} for potentials in the classes $X_t. $

\begin{Lemma}
\label{lem4.4} (a)  If $v \in X_t $ with $t>0, $ then $L_{Per^\pm}
(v)$ is similar to a self-adjoint operator, so $ Sp  \left
(L_{Per^\pm} (v) \right ) \subset \mathbb{R}. $

(b)  If $v \in X_t $ with $t<0, $ then there is an $N=N(v) $ such
that for $|n|>N$ either

(i) $\lambda_n^-$ and $\lambda_n^+$ are simple eigenvalues and
$\overline{\lambda_n^+} =\lambda_n^-, \;  Im \, \lambda^\pm_n \neq 0$

or
 (ii) $\lambda_n^+ =\lambda_n^-$ is a real eigenvalue of
algebraic and geometric multiplicity 2.
\end{Lemma}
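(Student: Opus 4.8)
The plan is to reduce part (a) to the self-adjoint class $X_{1}$ through the similarity of Lemma~\ref{lem4.3}, and to treat part (b) by first forcing $z_{n}^{*}$ to be real and then splitting the basic equation exactly as in the proof of Theorem~\ref{thm0}, keeping track of one sign. For part (a): if $v\in X_{t}$ with $t>0$, I take the parameter $\sqrt t$ in (\ref{4.21}); the conjugate potential $v_{c}=CvC^{-1}$ then lies in $X_{t/c^{2}}=X_{1}$, so $L_{Per^{\pm}}(v_{c})$ is self-adjoint, and by Lemma~\ref{lem4.3}(a) the operator $L_{Per^{\pm}}(v)$ is similar to it, hence $Sp\,(L_{Per^{\pm}}(v))=Sp\,(L_{Per^{\pm}}(v_{c}))\subset\mathbb{R}$. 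That finishes (a).

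For part (b) the first step is to show $z_{n}^{*}\in\mathbb{R}$ for $|n|>N$. By Lemma~\ref{lem4.3}(b), $(L_{Per^{\pm}}(v))^{*}=L_{Per^{\pm}}(v^{*})$, and for $v\in X_{t}$ with $t$ real one has $v^{*}=v_{t}$, which is of the form $v_{c}$ with $c=t$; so Lemma~\ref{lem4.3}(a) makes $(L_{Per^{\pm}}(v))^{*}$ similar to $L_{Per^{\pm}}(v)$. Since passing to the adjoint conjugates the spectrum while a similarity and the adjoint preserve algebraic multiplicities, the eigenvalue multiset of $L_{Per^{\pm}}(v)$ is invariant under $\lambda\mapsto\overline{\lambda}$; by Lemma~\ref{loc} the disc $D_{n}$ (symmetric about $\mathbb{R}$) carries exactly the pair $\{\lambda_{n}^{-},\lambda_{n}^{+}\}$ with algebraic multiplicity, so this pair is conjugation-invariant, $\lambda_{n}^{-}+\lambda_{n}^{+}\in\mathbb{R}$, and $z_{n}^{*}=\tfrac12(\lambda_{n}^{+}+\lambda_{n}^{-})-n\in\mathbb{R}$. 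Next, Lemma~\ref{lem2}(b) (its parameter $c$ equals $t$ here) gives $\overline{\beta_{n}^{+}(z;v)}=t\,\beta_{n}^{-}(z;v)$ and $\overline{\alpha_{n}(z;v)}=\alpha_{n}(\overline z;v)$, so for real $z$ one has $\alpha_{n}(z;v)\in\mathbb{R}$ and
\[
|\beta_{n}^{+}(z_{n}^{*};v)|=|t|\,|\beta_{n}^{-}(z_{n}^{*};v)|,\qquad
\beta_{n}^{+}(z;v)\,\beta_{n}^{-}(z;v)=t^{-1}|\beta_{n}^{+}(z;v)|^{2}\le0\ \text{ for real }z .
\]
The first identity is exactly (\ref{g1}) with $c=\max(|t|,|t|^{-1})$, so $v\in X$, and Proposition~\ref{prop1}, Corollary~\ref{cor3}, and the splitting of the basic equation from the proof of Theorem~\ref{thm0} are all available for $|n|>N$.

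Now the dichotomy, for $|n|>N$. If $\beta_{n}^{+}(z_{n}^{*};v)=\beta_{n}^{-}(z_{n}^{*};v)=0$, then $\gamma_{n}=0$ by (\ref{g4}) and, exactly as in the proof of Corollary~\ref{cor3}, $z_{n}^{*}$ is a double root of (\ref{be}) at which the $2\times2$ matrix of Lemma~\ref{lem1} is a scalar multiple of the identity, so $\lambda_{n}^{*}=n+z_{n}^{*}$ is a double eigenvalue of geometric multiplicity $2$, real because $z_{n}^{*}$ is: this is case (ii). Otherwise $\beta_{n}^{+}(z_{n}^{*};v)\neq0$, and by Lemma~\ref{lem21} $\beta_{n}^{\pm}$ are nonvanishing on a neighbourhood of $K_{n}=\{|z-z_{n}^{*}|\le\gamma_{n}\}$, so $g(z):=\beta_{n}^{+}(z)\sqrt{\beta_{n}^{-}(z)/\beta_{n}^{+}(z)}$ is analytic there with $g^{2}=\beta_{n}^{+}\beta_{n}^{-}$, and by the proof of Theorem~\ref{thm0} the two eigenvalue shifts $z_{1}(n),z_{2}(n)\in K_{n}$ are the unique roots of $z=\alpha_{n}(z)+g(z)$ and $z=\alpha_{n}(z)-g(z)$, with $\tfrac12(z_{1}+z_{2})=z_{n}^{*}$. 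Writing $z_{1}=z_{n}^{*}+w$, $z_{2}=z_{n}^{*}-w$ (so $\gamma_{n}=2|w|>0$ by Proposition~\ref{prop1}, hence $w\neq0$) and subtracting the two fixed-point relations gives $2w=(\alpha_{n}(z_{1})-\alpha_{n}(z_{2}))+(g(z_{1})+g(z_{2}))$. Conjugation-invariance of $\{z_{1},z_{2}\}$ forces $w\in\mathbb{R}$ or $w\in i\mathbb{R}$; if $w\in\mathbb{R}$, then $z_{1},z_{2}\in\mathbb{R}$, the difference $\alpha_{n}(z_{1})-\alpha_{n}(z_{2})$ is real and $g(z_{1}),g(z_{2})$ are purely imaginary (since $g^{2}\le0$ on $\mathbb{R}$), so the real part of the identity yields $2|w|\le|\alpha_{n}(z_{1})-\alpha_{n}(z_{2})|\le\varepsilon_{n}|z_{1}-z_{2}|=2\varepsilon_{n}|w|$ with $\varepsilon_{n}\to0$ by (\ref{1.37}), which is impossible for large $|n|$. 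Hence $w\in i\mathbb{R}\setminus\{0\}$, so $\lambda_{n}^{\pm}=n+z_{n}^{*}\pm w$ are distinct, hence simple (the total algebraic multiplicity in $D_{n}$ being $2$), satisfy $\overline{\lambda_{n}^{+}}=\lambda_{n}^{-}$, and have nonzero imaginary part: this is case (i).

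I expect the only genuine obstacle to be this last step, namely pinning down the \emph{direction} of the gap for $t<0$ and thereby excluding a pair of distinct real eigenvalues near $n$. Everything else reduces to bookkeeping with Lemma~\ref{lem4.3} and the Lyapunov--Schmidt material of Section~2, but that exclusion really does require combining the reality of $z_{n}^{*}$, the sign $\beta_{n}^{+}\beta_{n}^{-}\le0$ on the real axis, and the smallness $\alpha_{n}'=O(\varepsilon_{n})$ on $D_{n}$.
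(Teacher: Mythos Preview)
Your proof is correct, but for part (b) you take a genuinely different route from the paper. For (a) the two proofs coincide. For (b), the paper first reduces via (\ref{4.21}) with $c=\sqrt{|t|}$ to the skew-symmetric case $t=-1$, and then disposes of the real-eigenvalue case by a short linear-algebra observation: if $\lambda\in D_n$ is real and $(w_1,w_2)^T$ is an eigenvector of $L_{Per^\pm}(v)$ for $\lambda$, then a direct conjugation of the eigenvalue equation shows that $(\overline{w_2},-\overline{w_1})^T$ is also an eigenvector for the same $\lambda$, and these two are orthogonal, hence linearly independent. Thus any real eigenvalue in $D_n$ already has geometric multiplicity $\ge 2$, and since the total algebraic multiplicity in $D_n$ is $2$, one is forced into case (ii); the possibility of two distinct real eigenvalues is excluded automatically. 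No use of $\alpha_n,\beta_n^\pm$, Proposition~\ref{prop1}, Lemma~\ref{lem21}, or the splitting (\ref{a17})--(\ref{a18}) is needed.

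Your argument instead stays inside the Lyapunov--Schmidt framework: you show $z_n^*\in\mathbb{R}$, deduce from Lemma~\ref{lem2}(b) that $\beta_n^+\beta_n^-\le 0$ on the real axis, and then use the splitting of the basic equation together with the $O(\varepsilon_n)$ bound on $\alpha_n'$ to rule out $w\in\mathbb{R}\setminus\{0\}$. This works, and it has the merit of explaining \emph{quantitatively} why the gap is perpendicular to the real axis (the sign of $\beta_n^+\beta_n^-$ forces $g$ to be purely imaginary on $\mathbb{R}$). The paper's argument, on the other hand, is shorter, needs no asymptotics in $n$, and yields the geometric-multiplicity-$2$ conclusion for \emph{any} real eigenvalue of a skew-symmetric Dirac operator, not only those in the localized discs $D_n$.
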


\begin{proof}
In view of Lemma \ref{lem4.3} and (\ref{4.21}), considered with $c=
\sqrt{|t|}, $ in case (a) the operator $L_{Per^\pm} (v)$ is similar
to a self-adjoint operator $L_{Per^\pm} (v_1)$ with $v_1 \in H_1.$

The same argument shows that in case (b) we need to consider only the
skew-symmetric case $t=-1.$ By Lemma \ref{loc}, there is an $N=N(v) $
such that for $|n|>N$ the disc $D_n =\{z: \; |z-n|<1/4\}$ contains
exactly two (counted with algebraic multiplicity) periodic (for even
$n$) or antiperiodic (for odd $n$) eigenvalues of the operator
$L_{Per^\pm}.$ By (\ref{4.9}) in Lemma~\ref{lem4.3}, if $\lambda \in
D_n $ with $  Im \, \lambda \neq 0 $ is an eigenvalue of $L_{Per^\pm}
$  then $\overline{\lambda} \in D_n $ is also an eigenvalue of
$L_{Per^\pm}$ and  $\overline{\lambda} \neq \lambda, $ so $\lambda $
and $\overline{\lambda}$ are simple, i.e., (i) holds.

Suppose $\lambda \in D_n $ is a real eigenvalue. We are going to show
that $\lambda $ is of geometric multiplicity two, i.e., (ii) holds.

Let $\begin{pmatrix} w_1 \\w_2
\end{pmatrix}$  be a corresponding (nonzero) eigenvector, i.e.,
$$
L \begin{pmatrix} w_1 \\w_2
\end{pmatrix}
= \lambda  L \begin{pmatrix} w_1 \\w_2
\end{pmatrix}.
$$
Passing to conjugates we obtain
$$
L \begin{pmatrix} \overline{w_2} \\ -\overline{w_1}
\end{pmatrix}
= \lambda  L \begin{pmatrix} \overline{w_2} \\ -\overline{w_1}
\end{pmatrix},
$$
i.e., $\begin{pmatrix} \overline{w_2} \\ -\overline{w_1}
\end{pmatrix}$ is also an eigenvector corresponding to the same eigenvalue $\lambda.$
But $\left \langle    \begin{pmatrix} w_1 \\w_2
\end{pmatrix},
\begin{pmatrix} \overline{w_2} \\ -\overline{w_1}
\end{pmatrix}    \right  \rangle =0, $
so these vector-functions are orthogonal, and therefore, linearly
independent. This completes the proof of Lemma~\ref{lem4.4}.
\end{proof}

\begin{Proposition}
\label{prop6} Suppose $v \in X_t $ with $ t\neq 0$ real. Then there
is $N=N(v)$ such that
\begin{equation}
\label{4.24} z_n^* = \frac{1}{2} (\lambda_n^- + \lambda_n^+) - n
\quad \text{is real for} \quad |n|>N.
\end{equation}
Moreover, for every real $ t \neq 0$
\begin{equation}
\label{4.25} \overline{\beta^+_n (z_n^*, v)}=t \cdot \beta^-_n
(z_n^*, v),
\end{equation}
which implies $v \in X, $ i.e.,
\begin{equation}
\label{4.26} X_t \subset X.
\end{equation}
\end{Proposition}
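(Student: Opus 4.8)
The plan is to establish \eqref{4.24} and \eqref{4.25} and then deduce \eqref{4.26} from the definition \eqref{g1} of the class $X$. I would begin with \eqref{4.24}. For $t>0$, Lemma~\ref{lem4.4}(a) tells us that for $|n|>N$ the two eigenvalues $\lambda_n^\pm$ in the disc $D_n$ are eigenvalues of an operator similar to a self-adjoint one, hence real; so $z_n^* = \frac12(\lambda_n^-+\lambda_n^+)-n$ is real. For $t<0$, by Lemma~\ref{lem4.4}(b) we are in one of two cases for $|n|>N$: either $\lambda_n^+ = \overline{\lambda_n^-}$, in which case $\lambda_n^-+\lambda_n^+ = 2\,\mathrm{Re}\,\lambda_n^-$ is real, or $\lambda_n^+ = \lambda_n^-$ is itself real; in both cases $z_n^*$ is real. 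This settles \eqref{4.24}.

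\textbf{The identity \eqref{4.25}.} The key input is Lemma~\ref{lem2}(b): when $Q = t\overline{P}$ with $t$ real, we have $\overline{S^{21}(n,\overline z;p,q)} = t\, S^{12}(n,z;p,q)$, that is, $\overline{\beta_n^+(\overline z;v)} = t\,\beta_n^-(z;v)$ in the notation \eqref{1.22}. Now specialize $z = z_n^*$. By \eqref{4.24}, $z_n^*$ is real, so $\overline{z_n^*} = z_n^*$, and the identity becomes exactly $\overline{\beta_n^+(z_n^*;v)} = t\,\beta_n^-(z_n^*;v)$, which is \eqref{4.25}. (One should note that $\beta_n^\pm$ are analytic on $|z|\le 1$ by Proposition~\ref{bprop}(a) and that the relations in Lemma~\ref{lem2} hold as identities of analytic functions, so evaluation at the real point $z_n^*$ is legitimate.)

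\textbf{Deducing \eqref{4.26}.} Taking absolute values in \eqref{4.25} gives $|\beta_n^+(z_n^*;v)| = |t|\cdot|\beta_n^-(z_n^*;v)|$ for $|n|>N$. Since $t$ is a fixed nonzero real number, this is precisely the two-sided bound \eqref{g1} with $c = \max\{|t|,1/|t|\}$ (indeed with equality in the sharper form $|\beta_n^+| = |t||\beta_n^-|$). Hence $v \in X$, and since $v \in X_t$ was arbitrary, $X_t \subset X$, which is \eqref{4.26}.

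\textbf{Main obstacle.} The only genuinely delicate point is \eqref{4.24}: one must be sure that for large $|n|$ the ``center'' $z_n^*$ of the pair $\{\lambda_n^-,\lambda_n^+\}$ is real, and this rests on the spectral symmetry supplied by Lemma~\ref{lem4.4} (via Lemma~\ref{lem4.3} and the similarity \eqref{4.21}), which in turn uses the Localization Lemma~\ref{loc} to guarantee exactly two $Per^\pm$-eigenvalues in each $D_n$. Everything after that is a direct substitution into the functional identity of Lemma~\ref{lem2}(b); no estimates are needed, so there is no analytic difficulty beyond invoking the already-established lemmas.
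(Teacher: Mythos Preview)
Your proof is correct and follows essentially the same route as the paper: invoke Lemma~\ref{lem4.4} to get the reality of $z_n^*$, then plug $z=z_n^*$ into the identity of Lemma~\ref{lem2}(b) (formula~\eqref{1.20}) to obtain \eqref{4.25}, and take absolute values to verify \eqref{g1}. The paper's own proof is simply a terser version of what you wrote.
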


\begin{proof} Suppose $v \in X_t $ with $ t\neq 0$ real.
Lemma \ref{lem4.4} implies (\ref{4.24}) immediately. In view of
(\ref{1.22}) and (\ref{4.24}), it follows from Part (b) of
Lemma~\ref{lem2}, formula (\ref{1.20}), that (\ref{4.25}) holds. In
view of (\ref{g1}) we obtain $v \in X, $ which completes the proof.
\end{proof}

In view of Theorem \ref{thm0} and (\ref{4.26}) we have
\begin{Corollary}
If $v \in X_t $ then there is a Riesz basis in $L^2 ([0,\pi],
\mathbb{C}^2)$ which consists of eigenfunctions and at most finitely
many associated functions of the operator $L_{Per^\pm} (v).$
\end{Corollary}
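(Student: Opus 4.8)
The plan is to obtain this Corollary as an immediate consequence of Proposition~\ref{prop6} and Theorem~\ref{thm0}, since the genuine analytic work has already been done in those two statements. First I would recall that, by Proposition~\ref{prop6}, any $v\in X_t$ with real $t\neq 0$ satisfies the exact identity $\overline{\beta^+_n(z_n^*,v)}=t\,\beta^-_n(z_n^*,v)$ for all $|n|>N(v)$ (formula~(\ref{4.25})). Taking moduli gives $|\beta^+_n(z_n^*;v)|=|t|\,|\beta^-_n(z_n^*;v)|$, which is precisely the two-sided bound (\ref{a1}) (equivalently the defining condition (\ref{g1}) of the class $X$) with the uniform constant $c=\max\{|t|,1/|t|\}$. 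Hence $v\in X$; this is exactly the inclusion (\ref{4.26}).

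Next I would simply invoke Theorem~\ref{thm0}. Since $v\in X$, that theorem produces — applied separately to $bc=Per^+$ (considering even $n$) and to $bc=Per^-$ (considering odd $n$), as the Remark following Theorem~\ref{thm0} emphasizes — a Riesz basis of $L^2([0,\pi],\mathbb{C}^2)$ consisting of eigenfunctions of $L_{Per^\pm}(v)$ together with at most finitely many associated functions. The finitely many associated functions can only occur among the low-lying part of the spectrum contained in the rectangle $R_N$ of Lemma~\ref{loc}; completeness of the whole root-function system there (hence the possibility of adjoining this finite-dimensional piece to the Riesz basis coming from $\bigoplus_{|n|>N}\mathrm{Ran}\,P_n$) is covered by \cite[Theorem 9]{DM20} and a finite-dimensional argument, exactly as in the proof of Theorem~\ref{thm0}. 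This yields the assertion of the Corollary.

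I do not expect any real obstacle here beyond bookkeeping: the only points needing a line of care are (i) that the parity restriction in the hypothesis of Theorem~\ref{thm0} is harmless, since (\ref{4.25}) holds for \emph{all} large $|n|$, in particular along the relevant arithmetic progression; and (ii) that the constant $c$ in (\ref{a1}) may be taken independent of $n$ (and of the point near $0$ at which one evaluates), because (\ref{4.25}) is an \emph{equality} at $z=z_n^*$, not merely an estimate. Once these are recorded, the proof is complete.
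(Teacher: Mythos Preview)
Your proposal is correct and follows exactly the paper's approach: the paper derives the Corollary in one line from Theorem~\ref{thm0} together with the inclusion $X_t\subset X$ of~(\ref{4.26}) established in Proposition~\ref{prop6}. Your additional remarks on the constant $c=\max\{|t|,1/|t|\}$ and the parity bookkeeping simply spell out what the paper leaves implicit.
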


 In view of Proposition~\ref{prop6},
(\ref{4.24}) and (\ref{4.25}), for sufficiently large $N$ the
nonlinear operators (compare with \cite[(3.52)--(3.54)]{DM15})
\begin{equation}
\label{4.31} A_N (v) = v + \Phi_N (v), \quad \Phi_n (v) =
\begin{pmatrix} 0 & \Phi_N^{12}  \\ \Phi^{21} (v) &  0,
\end{pmatrix}
\end{equation}
where
\begin{equation}
\label{4.32} \Phi_N^{12}= \sum_{|n|>N} (\beta_n^- (v, z_n^*) -
p(-n))e^{-2inx} \quad \text{and} \quad \Phi_N^{21}= \sum_{|n|>N}
(\beta_n^+ (v, z_n^*) - q(n))e^{2inx},
\end{equation}
are well-defined, and  $$v \in X_t \quad \Rightarrow \quad \Phi_N
(v), \, A_N (v) \in X_t $$ as well. Therefore, all constructions and
proofs of \cite[Section 3.4]{DM15} for symmetric (self-adjoint)
potentials become valid for any $X_t$-potential.

Moreover, in \cite[Theorem 70]{DM15} the density of finite-zone
potentials is first proved for general Dirac potentials, and then the
$A_N$-invariance of the space symmetric potentials is used (see
Remark 56 therein) to derive that the symmetric finite-zone
potentials are dense in any weighted space of symmetric potentials.
So, without any need to repeat or reproduce hard analysis we can
claim the following analog of \cite[Theorem 70]{DM15}.

\begin{Theorem}
\label{thm2} If $\Omega $ is a submultiplicative weight and  $X_t
(\Omega ) = X_t \cap H_D (\Omega ) $ is the corresponding Sobolev
space of $X_t$-Dirac potentials, then the finite-zone $X_t
$-potentials are dense in $X_t (\Omega).$
\end{Theorem}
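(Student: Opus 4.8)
The plan is to reduce Theorem~\ref{thm2} to \cite[Theorem 70]{DM15} by exploiting the invariance of the class $X_t$ under the nonlinear operators $A_N$ already established in the discussion preceding the statement. Concretely, I would proceed as follows. First, I would recall the structure of the proof of \cite[Theorem 70]{DM15}: one fixes a potential $v \in X_t(\Omega)$, chooses $N$ large (depending on $v$), and uses the map $A_N$ (see (\ref{4.31})--(\ref{4.32})) to transfer the problem of constructing finite-zone approximants to a problem about truncating the tail of the sequence $(\beta_n^\pm(z_n^*,v))_{|n|>N}$; the finite-zone potentials correspond precisely to those $w$ for which $\beta_n^\pm(z_n^*,w)=0$ for all $|n|$ large. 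The key analytic input — that $A_N$ is a local diffeomorphism in a neighborhood of $v$ in $H_D(\Omega)$ onto its image, with the preimage of the ``tail-truncation'' map providing finite-zone potentials arbitrarily close to $v$ — is exactly what is proved in \cite{DM15}, and I would not reproduce it.

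Second, and this is the substantive step here, I would verify that the entire construction stays inside $X_t$. By Proposition~\ref{prop6} we have $X_t\subset X$, and crucially $z_n^*$ is real for $|n|>N$ (formula (\ref{4.24})) and $\overline{\beta_n^+(z_n^*,v)}=t\,\beta_n^-(z_n^*,v)$ (formula (\ref{4.25})) whenever $v\in X_t$. These two facts show that $\Phi_N(v)$ has the block form $\begin{pmatrix}0 & \Phi_N^{12}\\ \Phi_N^{21} & 0\end{pmatrix}$ with $\overline{\Phi_N^{21}}=t\,\Phi_N^{12}$ (comparing the Fourier coefficients: $\overline{\beta_n^+(z_n^*,v)-q(n)}=t(\beta_n^-(z_n^*,v)-p(-n))$ because $q(n)=t\overline{p(-n)}$ for $v\in X_t$). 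Hence $A_N(v)=v+\Phi_N(v)\in X_t$, i.e. $A_N$ maps $X_t(\Omega)$ into itself. The tail-truncation operation also preserves the relation $\overline{\text{(upper block)}}=t\cdot\text{(lower block)}$ since it just zeroes out matching pairs of Fourier coefficients. Therefore the finite-zone approximants produced by the \cite{DM15} machinery, being preimages under $A_N$ restricted to $X_t(\Omega)$ of truncated data, automatically lie in $X_t$.

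Third, I would assemble the pieces: given $v\in X_t(\Omega)$ and $\varepsilon>0$, apply the $X_t$-restricted version of the construction from \cite[Theorem 70]{DM15} to obtain a finite-zone potential $w\in X_t(\Omega)$ with $\|w-v\|_{H_D(\Omega)}<\varepsilon$. Since $w$ is finite-zone and lies in $X_t$, this proves density. The main obstacle — or rather the main thing that must be checked carefully rather than asserted — is precisely the second step: that every nonlinear operator appearing in the \cite{DM15} argument (not only $A_N$ but any auxiliary maps used there, such as the ones implementing the inverse function theorem or the truncation) respects the defining linear constraint $Q=t\overline{P}$. This is a real-linear, not complex-linear, constraint, so one must track complex conjugation through the formulas; the identities (\ref{1.16})--(\ref{1.21}) of Lemma~\ref{lem2}, specialized via Lemma~\ref{lem4.3}(c) and (\ref{4.21}) to the $X_t$ setting, are exactly what make this work, and the paper has already flagged this invariance explicitly in the paragraph before the theorem. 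Given that, no new hard analysis is needed and the theorem follows.
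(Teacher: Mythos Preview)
Your proposal is correct and follows essentially the same approach as the paper: establish that the nonlinear operators $A_N$ (and hence the whole construction of \cite[Theorem~70]{DM15}) preserve the class $X_t$ via Proposition~\ref{prop6} (specifically (\ref{4.24}) and (\ref{4.25})), then invoke \cite[Theorem~70]{DM15} directly without repeating its hard analysis. If anything, you spell out the invariance check $\overline{\Phi_N^{21}}=t\,\Phi_N^{12}$ more explicitly than the paper does.
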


For skew-symmetric potentials  (i.e., when $t= -1$) Theorem~\ref{thm2}
is proved in   \cite{KST} (see Corollary~1.1 there).
See more comments about   \cite{KST} in Appendix.

\section{Appendix: remarks on the paper  \cite{KST} }

Presumably,  T. Kappeler, F. Serier and P. Topalov  (the authors of \cite{KST})  have not noticed
that $X_{skew-sym}$ potentials are invariant for {\em all} operators
$\Phi_N, \, A_N, $ etc. in \cite{DM15}, and they rewrote all
technical constructions, lemma by lemma, inequality by inequality from
\cite{DM6} or \cite{DM15} to justify  analogs of 
\cite[Theorems~58, 70] {DM15}
for skew-symmetric potentials. 
 But such copying is done in \cite{KST} without
specifying which lemmas and inequalities are rewritten and without
explaining that the entire architecture of \cite{DM6, DM15} is
reproduced.

 Moreover, the main results of  \cite{KST} follow immediately from Lemma 49 and
Theorem~68 in \cite{DM15}  (or, from Lemmas 48, 49 and Proposition 57 in \cite{DM15})
but this fact is not mentioned.
Appendix is aimed to cover these  gaps  in \cite{KST}, at
least partially. \vspace{3mm}

1. First, let us recall \cite[Theorem 68]{DM15}.

{\bf Theorem 68 in \cite{DM15}.}  
{\em If $$
 L = L^0  + v(x),
\quad L^0 = i
\begin{pmatrix}
 1  &  0 \\
0  &  -1
\end{pmatrix} \frac{d}{dx},
 \quad v(x)=
\begin{pmatrix}
 0  &  P(x) \\
Q(x)  &  0
\end{pmatrix}
$$ is a  periodic Dirac operator  with $L^2 $--potential (i.e.,
$P$ and $Q$ are periodic  $L^2 ([0,1])$--functions), then, for
$|n|> n_0 (v),  n \in \mathbb{Z},$ the operator $L$ has, in the
disk of center $n $  and radius $r = 1/4, $ exactly two (counted
with their multiplicity) periodic (for even $n$), or
anti--periodic (for odd $n$)   eigenvalues $\lambda^+_n $ and $
\lambda^-_n, $ and one Dirichlet eigenvalue $\mu_n. $

Let
\begin{equation}
\label{44.13} \Delta_n = |\lambda^+_n - \lambda^-_n | +
|\lambda^+_n - \mu_n|, \quad |n| > n_0;
\end{equation}
then, for each sub--multiplicative weight $\Omega, $
\begin{equation}
\label{44.14} v \in H(\Omega )  \; \Rightarrow \;  (\Delta_n)  \in
\ell^2 (\Omega ).
\end{equation}

Conversely, if  $\Omega = (\Omega (n))_{n\in \mathbb{Z}} $ is a
sub--multiplicative weight  such that
\begin{equation}
\label{44.15}  \frac{\log \Omega (n)}{n} \searrow 0 \quad
\text{as} \quad n \to \infty,
\end{equation}
then
\begin{equation}
\label{44.16}  (\Delta_n) \in \ell^2 (\Omega) \;  \Rightarrow  \;
v \in H(\Omega ).
\end{equation}

If $\Omega $ is a sub--multiplicative weight of exponential type,
i.e.,
\begin{equation}
\label{44.17} \lim_{n\to \infty}  \frac{\log \Omega (n)}{n} >0
\end{equation}
then}
\begin{equation}
\label{44.18} (\Delta_n) \in \ell^2 (\Omega) \; \Rightarrow   \;
\exists  \varepsilon >0: \; v \in H(e^{\varepsilon |n|} ).
\end{equation}

The main results in \cite{KST}  -- see Theorems 1.2 and 1.3 there --
follow from \cite[Theorem 68]{DM15} because
for skew-symmetric potentials  $v$
\begin{equation}
\label{ap1}
\Delta_n \leq K \gamma_n, \quad |n| \geq N_1 (v),
\end{equation}
where $K$ is an absolute constant.  Indeed, by
\cite[Theorem 66]{DM15}  we have for any $L^2$ potential $v$
\begin{equation}
\label{ap2}
 \frac{1}{144} \left ( |\beta^-_n (z_n^*)| +|\beta^+_n
(z_n^*)| \right ) \leq\Delta_n  \leq
54 \left ( |\beta^-_n (z_n^*)| +|\beta^+_n (z_n^*)| \right ),
\quad |n| \geq N(v).
\end{equation}
On the other hand,  since
\begin{equation}
\label{ap3}
|\beta^-_n (z_n^*,v)| = |\beta^+_n
(z_n^*,v)|   \quad \text{for skew-symmetric} \; \; v,
\end{equation}
 \cite[Lemma 49]{DM15} implies easily
\begin{equation}
\label{ap5}  \gamma_n \geq  D
 \left ( |\beta^-_n (z_n^*)| +|\beta^+_n
(z_n^*)| \right ) ,
\end{equation}
where $D$  is an absolute constant. 
 Therefore, (\ref{ap1}) holds
for skew-symmetric
potentials. \vspace{3mm}

2. The authors of \cite{KST} did not say anything about the relation between
\cite[Theorem 68]{DM15} and their main results but they explained 
(see p. 2087 in \cite{KST})  that they wrote a
big portion in their 40 page long paper to fill a gap in the paper
\cite{DM15}. They write: "The proof of Lemma 36 in [5] has a gap on
p. 710 as Lemma 32 in [5] cannot be applied to the expression
$\Sigma_4 (n) $ given by (2.117) of [5]. However it turns out that
the method developed in [4] can be applied."

Maybe in \cite{DM15} not everything is explained letter-by-letter
(which  is common in mathematical research papers)
 but without any extra effort the expression $\Sigma_4 (n) $
 given by (2.117) of \cite{DM15} can be estimated by
 Lemma~32. Indeed, we have
 \begin{equation}
 \label{2.117}
\Sigma_4 (n)= \langle  \hat{V}\hat{D} \hat{T}^2 (1-\hat{T}^2)^{-1}
\hat{T} \hat{V} e_n^1, e_n^2 \rangle = \sum_{i,j \neq n}
\frac{r(n+i)r(n+j)}{|n-i||n-j|} h^{21}_{ij} (n)
\end{equation}
with
 \begin{equation}
 \label{2.117a}
h=   \hat{T}^2 (1-\hat{T}^2)^{-1}
 \hat{V}.
\end{equation}
Therefore, if $\Sigma_4 (n)$ is written as (\ref{2.117}) with $h$
given by (\ref{2.117a}) then Lemma 32 immediately yields the
inequality (2.122) on page 710 of \cite{DM15}.
\vspace{3mm}

3.  Lemma 49 in \cite{DM15}  is essentially \cite[Lemma 12]{DM5}
(its formulation
and proof are the same for Dirac and
 Hill-Schr\"odinger operators).   It plays a crucial role in
getting estimates  of $\gamma_n $ from below in terms
of $  \left ( |\beta^-_n (z_n^*)| +|\beta^+_n
(z_n^*)| \right )$  (see \cite[Section 3.2]{DM15})
which leads to two-sided estimates
of $\gamma_n $  (see \cite[Theorem 50]{DM15}).
In Section 7 of  \cite{KST}  (titled "lower bound for $\gamma_n"$),
Lemma~7.1   is almost identical to
 Lemma 49 in \cite{DM15}  or  \cite[Lemma 12]{DM5}
but the authors did not give any credit to the papers
\cite{DM5,DM15}.

Let us mention also that with (\ref{ap5}) proven 
for skew-symmetric potentials 
the main results of \cite{KST} 
follow from  
\cite[Lemma 48 and Proposition 57]{DM15}  in the same way as \cite[Theorem 58]{DM15}
(self-adjoint case) -- namely,   if $(\gamma_n)\in \ell^2 (\Omega), $
then by
(\ref{ap5}) and Lemma 48  there is a slowly growing weight $\Omega_1 $ 
such that $A_N (v) \in H(\Omega \cdot \Omega_1),$ 
so \cite[Proposition~57]{DM15}   implies  $v \in H(\Omega \cdot \Omega_1) \subset H(\Omega).$  
\vspace{3mm}

4.  Theorems~1.1 and 4.1 in \cite{KST}  claim  (\ref{44.14}),
respectively, for skew-symmetric and
arbitrary potentials $v.$   Of course, Theorem~1.1 is a
partial case of Theorem~4.1
but the proof in both cases is the same.
The authors of \cite{KST} write on p. 2075:
"To make the paper self-contained we include
for convenience of the reader   a proof of Theorem 1.1."
Although it is written in the introduction
that, in the generality stated, Theorem 1.1 (or Theorem 4.1)
is proven in \cite{DM6,DM15}, 
it is not said that the proof of Theorem 1.1 in \cite{KST} is copied from there.
In particular:

(i)  Lemma 2.2 on p. 2083 in \cite{KST}
(which is crucial for the proof of Lemmas 4.1 and 4.2,
and therefore for the whole paper)
reproduces Lemma~2 in \cite{DM6} and its proof -- see pp. 144--146
there -- but no credit  is given to   \cite{DM6}.

(ii)  Proposition 4.1 in \cite{KST}  (which also provides the crucial a priori  estimate
in the proof of Theorems~1.2 and 1.3 \cite{KST})  reproduces Lemma 36 in
\cite{DM15}. 

(iii) Corollary 4.1, p. 2096 in \cite{KST} and its inequality is essentially the same as   
pp. 163--164 in \cite{DM6}, in particular Inequality (4.18). \vspace{3mm}

5.  Proposition 5.1, p. 2100 in \cite{KST}, reproduces  -- with all essential steps in the proof -- Lemma 55, p. 729 in \cite{DM15}.  One semantic remark, however, should be made. 
Instead of straightforward application of Banach-Cacciopolli contraction principle --
as it has been done in \cite{Mit00} and explained and further used in \cite{GK03}
 -- the authors of \cite{KST} prefer to talk about 
Implicit Function Theorem for analytic diffeomorphisms. 
  But an abstract approach -- Fixed Point Theorem or Implicit Function Theorem -- works only if some hard analysis 
is done, and the authors of \cite{KST} copy such analysis from \cite{DM6,DM15}. 
 \vspace{3mm}

6.    Proposition 6.2,  p. 2103 in \cite{KST}, reproduces  
\cite[Proposition~57]{DM15}  but this fact is not mentioned in \cite{KST}. 
\vspace{3mm}

7. This comparative analysis could go on and on but at the end we have
to mention ONE   case when \cite{KST} give a (funny) credit to \cite{DM15},
see p. 2108, lines 1--4 from the bottom: 

"To prove Theorem 1.2 and Theorem 1.3, we want to apply Theorem~8.1.  
The following lemma which can be found in \cite[Lemma 48]{DM15} allows 
to get rid of the weight function $w$ in Theorem~8.1. 

{\bf Lemma 9.1.} {\em If $z=(z_k)_{k\in\mathbb{Z}},$  then there exists an unbounded slowly increasing weight $w=(w(k))_{k\in\mathbb{Z}}$  such that} $z \in h^w. $"

Of course,  the above statement is a version of a well known Calculus exercise: if a series of positive numbers 
$\sum c_k $ converges then there is a sequence $D_k \to \infty $ such that  
the series $\sum c_k D_k $ converges as well.

This is the only lemma for which the authors of \cite{KST} 
give credit to \cite{DM15} although they rewrote tens of other lemmas and inequalities
and their proofs without saying  anything about the origin. 
But quite paradoxically this "credit" is very revealing.
It shows that \cite{KST} borrow the entire structure of the proof from \cite{DM6,DM15}
so even such a minor item as an elementary lemma (Lemma 48 in \cite{DM15}) could not be omitted;
without that brick the proofs of their main results would not exist.

\end{document}